\newtheorem{theorem}{Theorem}[section]
\newtheorem{lemma}[theorem]{Lemma}
\newtheorem{definition}[theorem]{Definition}
\newtheorem{proposition}[theorem]{Proposition}
\newtheorem{remark}[theorem]{Remark}
\newtheorem{corollary}[theorem]{Corollary}
\DeclareMathOperator{\spann}{span}
\DeclareMathOperator{\id}{id}
\DeclareMathOperator{\ind}{ind}
\DeclareMathOperator{\co}{co}
\begin{document}
 
\title{Geodesic fields for Pontryagin type $C^0$-Finsler manifolds}

\author{Ryuichi Fukuoka}
\address{Department of Mathematics, State University of Maring\'a,
87020-900, Maring\'a, PR, Brazil \\ email: rfukuoka@uem.br}

\author{Hugo Murilo Rodrigues}
\address{Department of Mathematics, State University of Maring\'a,
87020-900, Maring\'a, PR, Brazil \\ email: hugo\_murilo@hotmail.com}

\date{\today}

\begin{abstract}
Let $M$ be a differentiable manifold, $T_xM$ be its tangent space at $x\in M$ and $TM=\{(x,y);x\in M;y \in T_xM\}$ be its tangent bundle. 
A $C^0$-Finsler structure is a continuous function $F:TM \rightarrow \mathbb [0,\infty)$ such that $F(x,\cdot): T_xM \rightarrow [0,\infty)$ is an asymmetric norm. 
In this work we introduce the Pontryagin type $C^0$-Finsler structures, which are structures that satisfy the minimum requirements of Pontryagin's maximum principle for the problem of minimizing paths.
We define the extended geodesic field $\mathcal E$ on the slit cotangent bundle $T^\ast M\backslash 0$ of $(M,F)$, which is a generalization of the geodesic spray of Finsler geometry.
We study the case where $\mathcal E$ is a locally Lipschitz vector field.
We show some examples where the geodesics are more naturally represented by $\mathcal E$ than by a similar structure on $TM$.
Finally we show that the maximum of independent Finsler structures is a Pontryagin type $C^0$-Finsler structure where $\mathcal E$ is a locally Lipschitz vector field.
\end{abstract}

\date{}

\keywords{geodesic field, extended geodesic field, cotangent bundle, Pontryagin's maximum principle, Finsler structure, $C^0$-Finsler structure, maximum of Finsler structures}

\subjclass[2010]{49J15, 53B40, 53C22}

\maketitle

\section{Introduction}

Let $M$ be a differentiable manifold, $T_xM$ be its tangent space at $x\in M$ and $T^\ast_xM$ be its cotangent space at $x$.
Denote the tangent and cotangent bundle of $M$ by $TM=\{(x,y);x\in M, y\in T_xM \}$ and $T^\ast M = \{ (x,\alpha); x\in M, \alpha \in T^\ast_x M\}$ respectively. 
The slit tangent bundle and the slit cotangent bundle of $M$ will be denoted by $TM\backslash 0$ and $T^\ast M\backslash 0$ respectively.
A $C^0$-Finsler structure on $M$ is a continuous function $F:TM \rightarrow \mathbb R$ such that $F(x,\cdot)$ is an asymmetric norm (a norm without the symmetry condition $F(x,y)=F(x,-y)$). 
$C^0$-Finsler structures arise naturally in the study of intrinsic metrics on homogeneous spaces. 
In \cite{Berestovskii1, Berestovskii2}, the author proves that if $M$ is a locally compact and locally contractible homogeneous space endowed with an invariant intrinsic metric $d_M$, then $(M,d_M)$ is isometric to a left coset manifold $G/H$ of a Lie group $G$ by a compact subgroup $H < G$ endowed with a $G$-invariant $C^0$-Carnot-Carath\'{e}odory-Finsler metric.
He also proved that if every orbit of one-parameter subgroups of $G$ (under the natural action $a: G \times G/H \rightarrow G/H$) is rectifiable, then $d_M$ is $C^0$-Finsler.

Finsler geometry and Riemannian geometry have several points in common. 
Differential calculus is one of their main tools and, as a consequence, geometrical objects like geodesics are very similar.
For instance, given $x\in M$ and $y\in T_xM$, there exists a unique geodesic $\gamma:(-\epsilon, \epsilon) \rightarrow M$ such that $\gamma(0) = x$ and $\gamma^\prime(0)=y$.
The study of connections and curvatures in Finsler geometry resembles the study of those objects in Riemannian geometry.
On the other hand there are some differences between them as well, like the non-existence of a canonical volume element in Finsler geometry, what makes geometric analysis as developed in Riemannian geometry less natural in Finsler geometry.

Geometrical properties of $C^0$-Finsler manifolds can be very different from those of Finsler manifolds.
For instance $\mathbb R^2$ endowed with the maximum norm can be canonically identified with a $C^0$-Finsler manifold $(M,F)$.
Given $x\in \mathbb R^2$ and a vector $y$ in the tangent space of $x$, there exist infinitely many geodesics $\gamma:(-\epsilon, \epsilon) \rightarrow \mathbb R^2$ with constant speed such that $\gamma (0) = x$ and $\gamma^\prime(0)=y$.
On the other hand, we have a large family of projectively equivalent $C^0$-Finsler manifolds on $\mathbb R^2$ such that for every pair of points, there exists a unique minimizing path connecting them:
All minimizing paths are line segments parallel to the vectors $(0,1)$, $(\sqrt{3}/2,1/2)$ or $(-\sqrt{3}/2, 1/2)$ or else a concatenation of two of these line segments (see \cite{Fukuoka-large-family}). 
Therefore if $y$ isn't parallel to one of these three vectors, then there isn't any geodesic satisfying $\gamma (0) = x$ and $\gamma^\prime (0) = y$.
If $y$ is parallel to one of these vectors, then there exist infinitely many minimizing paths satisfying $\gamma (0) = x$ and $\gamma^\prime(0) = y$.

Control theory, Hamiltonian formalism, Legendre transformation and Pontryagin's maximum principle (PMP) have been valuable tools in order to study $C^0$-sub-Finsler manifolds $(M,F)$ and also in order to develop Riemannian and Finsler geometry under a new perspective (See, for instance, \cite{Agrachev-Barilari-Rizzi}, \cite{Agrachev-Gamkrelidze-feedback-1}, \cite{Lee-displacement}, \cite{MatveevTroyanov} and \cite{Ohta-Hamiltonian} among many other works).
Control theory and PMP are specially very well suited to overcome the lack of vertical smoothness of $F$ as well as to deal with the restriction of admissible curves.
In general these problems are formalized through the Hamiltonian formalism on $T^\ast M$.
It is important to notice that in order to apply the PMP, $(M,F)$ must satisfy some type of ``horizontal smoothness''.
For instance, this feature is assured for left invariant $C^0$-sub-Finsler structures on Lie groups and for sub-Riemannian structures. 
In what follows, we present a (certainly incomplete) list of works related to this paper.

Several geometrical objects of Riemannian manifolds such as geodesics, Jacobi fields, conjugate points, volume, curvatures, geometric inequalities, comparison theorems, etc., have been studied in sub-Riemannian geometry using control theory and PMP (see, for instance, \cite{Agrachev-Barilari-Rizzi}, \cite{Agrachev-Barilari-Boscain}, \cite{Agrachev-Barilari-Paoli}, \cite{Barilari-Chitour}, \cite{Barilari-Rizzi-conjugate},  \cite{Barilari-Rizzi-Inventiones}, \cite{Boscain-Rossi} and references therein).
For a more in depth study of sub-Riemannian geometry, see \cite{Agrachev-Barilari-Boscain}.

Pontryagin extremals and geodesics have been studied and even calculated explicitly in Lie groups endowed with left invariant $C^0$-sub-Finsler structures.
The first case where all the minimizing paths were calculated explicitly was on the two dimensional non-abelian simply connected Lie group (see \cite{Gribanova}). 
Several cases were studied since then (see, for instance, \cite{Ardentov-LeDonne-Sachkov}, \cite{Ardentov-Lokutsievskiy-Sachkov}, \cite{Berestovskii-Zubareva-Engel}, \cite{Hakavuori-Infinite-geodesics}, \cite{Lokutsievskiy-convex}, \cite{Sachkov-bang-bang-Cartan}, \cite{Sachkov-bang-bang-Engel}) and they emphasize that several phenomena that do not occur in the Finsler case can happen in this setting, as the sudden change of derivatives along minimizing paths.
Here, as it frequently happens in the study of Lie groups endowed with invariant geometrical structures, the problem is usually faithfully represented on its Lie algebra and on its dual space.

In \cite{Agrachev-Gamkrelidze-feedback-1} the authors proposed a systematic way to study a wide range of geometrical objects like $C^0$-sub-Finsler manifolds, pseudo-Riemannian manifolds, etc. 
In the particular case of $C^0$-sub-Finsler manifolds, the control region is given by a convex and compact subset of some Euclidean space with the origin in its interior.
The closed unit balls on the distribution are given in terms of a ``smooth deformation'' of the control region. 
This is, according to our best knowledge, the first time where this type of general approach was proposed. 

In \cite{Zelenko-Li} the authors considered smooth submanifolds of $TM$ transversal to the fibers as the geometrical structure, which includes smooth (not necessarily strongly convex) sub-Finsler structures.

In \cite{Barilari-Boscain-LeDonne-Sigalotti}, the authors study a particular case of $C^0$-sub-Finsler structure, which is constructed from a product $M\times \mathbb (\mathbb R^k,\Vert \cdot\Vert)$ and a smooth morphism of vector bundles $f: M \times \mathbb R^k \rightarrow TM$. 
The map $f$ induces a distribution $\mathcal D = f(M \times U)$ on $TM$, and $(M, \mathcal D)$  is endowed with a $C^0$-sub-Finsler metric $\Vert v\Vert_{sF} = \inf \{\Vert w\Vert; f(p,w) = v\}$.
The problem of minimizing path can be written in terms of a control system with a finite number of controls.

Let $C \subset \mathbb R^k$ be a closed control region and $f: M \times U \rightarrow TM$ be a Lipschitz function such that $f_u = f(\cdot, u)$ are smooth vector fields for every $u \in C$.
Suppose that $(x,u) \mapsto \partial f/\partial x (x,u)$ is continuous. 
In \cite{Agrachev-Lee}, the authors study the Bolza problem using this setting, which includes the case of Lipschitz $C^0$-Finsler structures.
This case is the most similar to the theory we develop here.

From the aforementioned works, it is clear that the use of PMP for this kind of problems is usual nowadays.
However it is also clear that there is not a standard way to study $C^0$-sub-Finsler structures with ``horizontal smoothness'' 
outside the sub-Riemannian geometry and the left invariant $C^0$-sub-Finsler structures on Lie groups.
This is a natural situation because the geometrical structures that can be studied with PMP can vary and even different problems on the same manifold needs its own formulation.

In this work we study Pontryagin type $C^0$-Finsler structures on differentiable manifolds.
We suppose that the minimum requirements of PMP are in place: 
$F$ is continuous and there are a family of unit vector fields such that the PMP holds. 
We apply the PMP for the time-optimal problem, and as the result, we obtain the extended geodesic field $\mathcal E$ on $T^\ast M\backslash 0$, which is a generalization of the geodesic spray of Finsler geometry.
The integral curves $\gamma(t)$ of the differential inclusion $\gamma^\prime(t) = \mathcal E(\gamma(t))$ are the Pontryagin extremals and their projection on $M$ are the candidates to be minimizing paths parametrized by arclength.
% We will comment more about the usefulness of this approach at the end of this section. 

Now we present the contributions of this work for the theory of Pontryagin type $C^0$-Finsler manifolds.

\begin{enumerate}
\item Let $F_*:T^\ast M \rightarrow \mathbb R$ be the fiberwise dual $C^0$-Finsler structure of $F$ on $M$.
In Theorem \ref{E generaliza G} we prove that $F_\ast \mathcal E$ is a full generalization of the geodesic spray of Finsler manifolds.
This feature makes $F_\ast \mathcal E$ a natural candidate to bring elements of Finsler geometry to $C^0$-Finsler manifolds;
\item In Theorem \ref{dF* localmente Lipschitz}, we determine conditions on $(M,F)$ such that $\mathcal E$ (and $F_\ast \mathcal E$) is a locally Lipschitz vector field.
Convex analysis, the dual norm $F_\ast$ and the inverse of the Lengendre transform are used in order to assure the regularity of the vertical part of $\mathcal E$ (See Section \ref{asymmetric norms}).
\item In Subsection \ref{A non-strictly convex case} we show that $\mathcal E$ isn't just a theoretical object. 
It is possible to calculate geodesics explicitly even when $\mathcal E$ isn't a vector field.
\item In Subsection \ref{A strongly convex example} we present an example where every minimizing path parametrized by arclength can be represented as a projection of a integral curve of a locally Lipschitz vector field $\mathcal E$ on $T^\ast M\backslash 0$ but they can not be represented as a projection of a integral curve of a locally Lipschitz vector field on $TM$;
\item We present examples where the theory is applicable: Horizontally $C^1$ family of asymmetric norms (Section \ref{Horizontally C1 family of asymmetric norms subsection}), homogeneous spaces (Section \ref{Extended geodesic field on homogeneous spaces}) and the maximum of independent Finsler structures (Subsection \ref{maximum-Finsler}).
In each case, a family of vector fields of the control system is presented explicitly.

\end{enumerate}
It is important to observe that minimizing paths of Items (3) and (4) were calculated originally in \cite{Gribanova}, but we use only $\mathcal E$ in this work.

A relevant question is what happens if we drop the horizontal smoothness of a $C^0$-Finsler structure. 
In this case the theory is much less developed due to the lack of a model theory (like Riemannian geometry) and also due to the lack of tools in order to study variational problems in detail.
In \cite{Fukuoka-Setti, Setti} the authors consider a general $C^0$-Finsler structure $F$ and create a one-parameter family of Finsler structures $(F_\varepsilon)_{\varepsilon \in (0,1)}$ that converges uniformly to $F$ on compact subsets of $TM$.
This smoothing works properly on Finsler structures $F$, that is, several connections of Finsler geometry and the flag curvatures of $(M,F_\varepsilon)$ converges uniformly on compact subsets to the respective objects of $(M,F)$. 

The variation of the velocity of geodesics in the example $(M_1,F_1)$ of Subsection \ref{A non-strictly convex case} is discrete and the corresponding variation in a Finsler manifold $(M_2,F_2)$ is smooth. 
If we consider the $C^0$-Finsler manifold $(M_1\times M_2,F_1 + F_2)$, where $(F_1+ F_2)((x_1,x_2),(y_1,y_2))=F_1(x_1,y_1) + F_2(x_2,y_2)$, then discrete and continuous dynamics takes place in the same space naturally.
Eventually this type of feature of $C^0$-Finsler manifolds can be interesting in order to represent some dynamical systems.

In differential geometry, it is usual to use the term differentiable and smooth for something of class $C^\infty$. 
In this work, the term smooth stands for $C^\infty$ and the term differentiable has the usual meaning, that is, its variation can be locally approximated by a linear map in coordinate systems. 
We make this distinction because we deal with several non-smooth maps.
The exception applies to the terms like differentiable manifold, differentiable structure, etc, because they are widely used.
Differentiable manifolds will be always of class $C^\infty$.

This work is organized as follows: 
In Section \ref{preliminaries} we summarize the theory necessary for the development of this work.
In Section \ref{define pontryagin type} we define the Pontryagin type $C^0$-Finsler manifolds.
In Section \ref{Hamiltonian formalism} we present the extended geodesic field $\mathcal E$ and we prove that minimizing paths on $(M,F)$ parameterized by arclength are necessarily projection of integral curves of $\mathcal E$ (see Theorem \ref{minimizing}).
We also prove that $F_\ast \mathcal E$ is a generalization of the geodesic spray of Finsler geometry (see Theorem \ref{E generaliza G}).
In the next four sections we study conditions that guarantee that $\mathcal E$ is a locally Lipschitz vector field on $T^\ast M \backslash 0$.
Section \ref{asymmetric norms} deals with the geometry of asymmetric norms on vector spaces and its dual asymmetric norm.
In Section \ref{Horizontally C1 family of asymmetric norms subsection} we study the horizontally $C^1$ families of asymmetric norms in order to provide a large family of Pontryagin type $C^0$-Finsler manifolds in Section \ref{A Locally Lipschitz Case}.
In Section \ref{Extended geodesic field on homogeneous spaces} we prove that invariant $C^0$-Finsler structures on homogeneous spaces are of Pontryagin type. 
We also prove that if $F$ restricted to any tangent space is strongly convex, then $\mathcal E$ is a locally Lipschitz vector field (see Theorem \ref{localmente lipschitz homogeneo}).
Section \ref{Exemplos} is devoted to the study of three examples. 
The first two examples are quasi-hyperbolic planes, which were studied extensively in \cite{Gribanova}. 
Our study is focused on the usefulness of $\mathcal E$ instead of doing explicit calculations using PMP as it was done in \cite{Gribanova}.
The last example shows that the maximum of independent Finsler structures are Pontryagin type $C^0$-Finsler structures such that $\mathcal E$ is a locally Lipschitz vector field.
Finally in Section \ref{conclusoes} we leave some open problems for future works.

This work was mostly done during the Ph.D. of the first author under the supervision of the second author at State University of Maring\'a, Brazil.
The authors would like to thank J\'essica Buzatto Prudencio and professors Adriano João da Silva, Fernando Manfio, Josiney Alves de Souza, Luiz Antonio Barrera San Martin and Patr\'\i cia Hernandes Baptistelli for their suggestions. The authors would also like to thank the referee for his/her remarks and suggestions, which helped us to improve the original version of this work.

\section{Preliminaries}
\label{preliminaries}

In this section we fix notations and we present a summary of the results we use in this work.
We make this section short because the prerequisites can be found in the literature. 
For control theory and Pontryagin's maximum principle, see \S 11 and \S 12 of \cite{Pontryagin} and \cite{Sachkov}.
For the Hamiltonian formalism developed on differentiable manifolds, see \cite{Sachkov}.
The Finsler geometry prerequisites can be found in \cite{BaoChernShen} and the convex analysis prerequisites can be found in  \cite{RockafellarTyrrell}.

{\em In this work the Einstein convention for the summation of indices is in place, except in Section \ref{Hamiltonian formalism}, until Theorem \ref{minimizing}, where there are two possible variation of indices.}

\begin{definition}
An asymmetric norm $F$ on a finite dimensional real vector space $V$ is a function $F:V \rightarrow [0,\infty)$ satisfying
\begin{itemize}
\item $F(y) = 0$ iff $y=0$;
\item $F(\lambda y) = \lambda F(y)$ for every $\lambda >0$ and $y \in V$;
\item $F(y_1 +y_2) \leq F(y_1) + F(y_2)$.
\end{itemize}
\end{definition}   
Compare with \cite{Cobzas}.

\begin{definition}
If $F$ is an asymmetric norm on $V$, then we define the following subsets:
\begin{enumerate}
\item $B_F(y,R)=\{z\in\mathbb{R}^n; F(z-y)<R\}$, open ball with center $y$ and radius $R$;
\item $B_F[y,R]=\{z\in\mathbb{R}^n; F(z-y)\leq R\}$, closed ball with center $y$ and radius $R$;
\item $S_F[y,R]=\{z\in\mathbb{R}^n; F(z-y)=R\}$, sphere with center $y$ and radius $R$.
\end{enumerate}
\end{definition}

\begin{definition}
\label{define-c0-Finsler}
A $C^0$-Finsler structure on a differentiable manifold $M$ is a continuous function $F: TM \rightarrow [0,\infty)$ such that $F(x,\cdot): T_xM \rightarrow \mathbb R$ is an asymmetric norm for every $x \in M$.
\end{definition}

\begin{remark}
There are three definitions of Finsler structure in the literature. 
The smooth version (see \cite{BaoChernShen}) is by far the most usual and there are two versions where $F$ is continuous: in one of them $F(x,\cdot)$ is a norm (see \cite{Burago}) and in the other one $F(x,\cdot)$ is an asymmetric norm (see \cite{MatveevTroyanov}).
In this work we use the first definition.
In \cite{CordovaFukuokaNeves, Fukuoka-large-family}, the first author of this work and his collaborators used the term $C^0$-Finsler structure for the second definition of Finsler structure.
The term $C^0$-Finsler structure given in Definition \ref{define-c0-Finsler} coincides with the third definition of Finsler structure above. It was first used in \cite{Fukuoka-Setti} and it is a natural generalization of (smooth) Finsler structure. 
\end{remark}

We denote the open ball in $T_xM$ centered at $y$ with radius $R$ by $B_F(x,y,R)$.
Similar notations hold for closed balls and spheres on tangent spaces.

Now we present the Euler's theorem, that can  be found in \cite{BaoChernShen}. 
It is used several times in this work and it is put here for the sake of convenience.

\begin{theorem}[Euler's theorem]
\label{Euler theorem}
Suppose that $f:\mathbb R^n \rightarrow \mathbb R$ is differentiable in $\mathbb R^n\backslash 0$.
Then the following statements are equivalent:
\begin{enumerate}
\item $f$ is positively homogeneous of degree $r$, that is, $f(\lambda y) = \lambda^r f(y)$ for every $\lambda >0$;
\item The radial directional derivative of $f$ is given by $y^i \frac{\partial f}{\partial y^i} = rf(y)$.
\end{enumerate}
\end{theorem}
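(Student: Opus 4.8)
The plan is to establish the two implications separately, in both cases by restricting $f$ to a ray $\lambda \mapsto \lambda y$ emanating from the origin and reducing the problem to a one-variable computation; since such a ray with $y \neq 0$ stays inside $\mathbb R^n \backslash 0$, the hypothesis of differentiability is available along it.

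For $(1) \Rightarrow (2)$, I would fix $y \in \mathbb R^n \backslash 0$ and differentiate the identity $f(\lambda y) = \lambda^r f(y)$ in $\lambda > 0$. The chain rule turns the left-hand side into $y^i \frac{\partial f}{\partial y^i}(\lambda y)$ and the right-hand side into $r \lambda^{r-1} f(y)$; setting $\lambda = 1$ gives $y^i \frac{\partial f}{\partial y^i}(y) = r f(y)$, which is precisely (2), since $y$ is arbitrary.

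For $(2) \Rightarrow (1)$, I would again fix $y \in \mathbb R^n \backslash 0$ and introduce the auxiliary function $h(\lambda) = \lambda^{-r} f(\lambda y)$ for $\lambda > 0$. Differentiating,
\[
h'(\lambda) = -r\lambda^{-r-1} f(\lambda y) + \lambda^{-r} y^i \frac{\partial f}{\partial y^i}(\lambda y) = \lambda^{-r-1}\left( (\lambda y)^i \frac{\partial f}{\partial y^i}(\lambda y) - r f(\lambda y) \right),
\]
and the parenthesis vanishes upon applying hypothesis (2) at the point $\lambda y \in \mathbb R^n \backslash 0$. Hence $h' \equiv 0$ on $(0,\infty)$, so $h(\lambda) = h(1) = f(y)$ for all $\lambda > 0$; that is, $f(\lambda y) = \lambda^r f(y)$, which is (1).

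I do not anticipate any genuine obstacle: the argument is nothing more than the chain rule together with the fact that a function with vanishing derivative on an interval is constant. The only point deserving a word of care is that statement (2) must be read as holding at \emph{every} point of $\mathbb R^n \backslash 0$ — this is exactly what legitimizes the differentiation of $h$ above — and that the value $f(0)$ plays no role in the equivalence (when $r \neq 0$, homogeneity forces $f(0) = 0$ directly, e.g. by taking $\lambda = 2$).
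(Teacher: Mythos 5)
Your proof is correct. The paper does not actually supply its own proof of this theorem; it simply states it and refers the reader to Bao--Chern--Shen, so there is no in-paper argument to compare against. The two implications you give are the standard ones: differentiating $f(\lambda y)=\lambda^r f(y)$ in $\lambda$ and evaluating at $\lambda=1$ for $(1)\Rightarrow(2)$, and introducing $h(\lambda)=\lambda^{-r}f(\lambda y)$ and showing $h'\equiv 0$ for $(2)\Rightarrow(1)$, which is exactly the argument found in the cited reference and in most textbooks. Your two clarifying remarks are also well placed: hypothesis (2) must be invoked at the moving point $\lambda y$ (not just at $y$) to kill $h'$, and the value $f(0)$ is irrelevant since the equivalence is stated only on $\mathbb R^n\backslash 0$.
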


Let $(M,F)$ be an $n$-dimensional Finsler manifold.
Let $\phi=(x^1, \ldots, x^n)$ be a coordinate system on $M$ and $d\phi = (x^1, \ldots, x^n, y^1, \ldots, y^n)$ be the natural coordinate system on $TM$ with respect to $\phi$. 
Then $g_{ij} = \frac{1}{2}\frac{\partial^2 F^2}{\partial y^i \partial y^j} : TM\backslash 0 \rightarrow \mathbb R$ are the coefficients of the fundamental tensor of $F$ and
\[
C_{ijk}=\frac{1}{2}\frac{\partial g_{ij}}{\partial y^k}
\]
are the coefficients of the Cartan tensor.
We have that
\begin{equation}
\label{cartan e y zera}
\frac{\partial g_{ij(x,y)}}{\partial y^k} y^i 
= \frac{\partial g_{ij(x,y)}}{\partial y^k} y^j 
= \frac{\partial g_{ij(x,y)}}{\partial y^k} y^k 
= 0
\end{equation}
where $g_{ij(x,y)}$ are the coefficients of the fundamental tensor at $(x,y)$ and $y^i$ are the coordinates of $y$.
The formal Christoffel symbols of second kind of $(M,F)$ are given by
\begin{equation}
\label{Christoffel}
\gamma^{i}_{\ jk} = \frac{1}{2} g^{is}\left( \frac{\partial g_{sj}}{\partial x^k} + \frac{\partial g_{sk}}{\partial x^j} - \frac{\partial g_{jk}}{\partial x^s}\right),
\end{equation}
where $g^{is}$ are the coefficients of the inverse tensor of $g_{is}$.

\begin{remark}[Local existence of geodesics]
In \cite{Mennucci-asymmetric-distances} Mennucci introduced the $r$-intrinsic, $g$-intrinsic and $s$-intrinsic asymmetric metric spaces.
These concepts coincides for metric spaces and also for asymmetric metric spaces that are locally bilipschitz to a metric space. 
He proved that $C^0$-Finsler manifolds are $r$-intrinsic asymmetric metric spaces (in fact, this is only a particular case of his result, because his definition of ``Finsler manifold'' is more general than the definition used in this work).
Observe that $C^0$-Finsler structures are locally bilipschitz to Riemannian metrics and therefore they are also $g$-intrinsic and $s$-intrinsic.

Let $(Z,d_Z)$ be a $r$-intrinsic asymmetric metric space endowed with the topology $\mathcal T$, which is generated by the metric 
\[
d_{\max}(z_1,z_2)=\max\{d_Z(z_1,z_2),d(z_2,z_1)\}.
\]
Let $x,y \in Z$ and $\rho >0$ such that  $d_Z(x,y) \leq \rho$. 
Suppose that the forward balls 
\[
B^+[x,\rho^\prime]=\{z \in Z;d_Z(x,z) \leq \rho^\prime\}
\]
are contained in $\mathcal T$-compact subsets for every $\rho^\prime \leq \rho$. 
In \cite{Mennucci-geodesics} the author proves that there exist a minimizing path connecting $x$ and $y$.
In particular this result holds for $C^0$-Finsler manifolds in general. 

As in the case of intrinsic metric spaces (see for instance \cite{Burago}), Mennucci's proof is existential in nature. 
Control theory and the PMP allow us to obtain more geometrical details in $C^0$-sub-Finsler structures with ``horizontal smoothness'' as well as to calculate geodesics explicitly when $(M,F)$ has enough symmetries. 

\end{remark}

\section{Pontryagin type $C^0$-Finsler manifolds}
\label{define pontryagin type}

In this section we introduce the Pontryagin type $C^0$-Finsler manifolds.
We define them in order to satisfy the minimum requirements of Pontryagin's maximum principle (PMP).
The control region $C$ is the Euclidean unit sphere $S^{n-1}$.

\begin{definition}
\label{horizontally smooth}
A $C^0$-Finsler manifold is of Pontryagin type at $p \in M$ if there exist a neighborhood $U$ of $p$, a coordinate system $\phi = (x^1, \ldots, x^n): U \rightarrow \mathbb R^n$ (with the respective natural coordinate system $\phi_{TU} := d\phi: (x^1, \ldots, x^n,$ $ y^1, \ldots, y^n): TU \rightarrow \mathbb R^{2n}$ of the tangent bundle) and a family of $C^1$ unit vector fields 
\[
\{x \mapsto X_u(x) = (y^1(x^1, \ldots, x^n,u), \ldots, y^n(x^1, \ldots, x^n,u)); u \in S^{n-1}\}
\]
on $U$ parameterized by $u \in S^{n-1}$ such that
\begin{enumerate}
\item $u \mapsto X_u(x)$ is a homeomorphism from $S^{n-1} \subset \mathbb R^n$ onto $S_F[x,0,1]$ for every $x \in U$;
\item $(x,u) \mapsto (y^1(x^1, \ldots, x^n,u), \ldots, y^n(x^1, \ldots, x^n,u))$ is continuous;
\item $(x,u) \mapsto (\frac{\partial y^1}{\partial x^i} (x^1, \ldots, x^n, u), \ldots, \frac{\partial y^n}{\partial x^i} (x^1, \ldots, x^n, u))$ is continuous for every $i=1, \ldots, n$.
\end{enumerate} 
We say that $F$ is of Pontryagin type on $M$ if it is of Pontryagin type at every $p \in M$. 
In this case $(M,F)$ is a Pontryagin type $C^0$-Finsler manifold.
\end{definition}

Remark \ref{coordinate changes trivializations} shows that  Definition \ref{horizontally smooth} doesn't depend on the coordinate system.
Remark \ref{mudanca de coordenadas - trivializacao local} goes a little bit further and it shows that Definition \ref{horizontally smooth} doesn't depend on the coordinate system on $TU$ corresponding to a local trivialization. 

\begin{remark}
\label{coordinate changes trivializations}
Let us see how the map
\[
(x^1, \ldots, x^n, u^1, \ldots, u^n) \mapsto (x^1, \ldots, x^n, y^1(x^1, \ldots, x^n,u), \ldots, y^n(x^1, \ldots, x^n,u))
\]
behaves under coordinate changes.
Let $\phi_1, \phi_2: U \subset M \rightarrow \mathbb R^n$, where $\phi_1 = (x^1, \ldots, x^n)$ and $\phi_2 = (\tilde x^1, \ldots, \tilde x^n)$. 
The natural coordinate system of $TU$ with respect to $\phi_2$ will be denoted by $(\tilde x^1, \ldots, \tilde x^n, \tilde y^1,$ $ \ldots, \tilde y^n)$.
The coordinate changes between them are given by
\[
\tilde x = \tilde x(x)
\]
and
\begin{equation}
\label{mudancafibravertical}
\tilde y^i = a_{ij}(x) y^i,
\end{equation}
where $a_{ij}$ are smooth functions on $U$. 
Equation (\ref{mudancafibravertical}) is due to the fact that the coordinate changes on each tangent space is linear.

If $X_u$ is a family of $C^1$ vector fields parameterized by $u$ given by
\[
(x, u)
\mapsto (x, y_1 (x, u), \ldots, y_n (x, u)),
\]
and
\[
(\tilde x, u)
\mapsto (\tilde x, \tilde y_1(\tilde x,u), \ldots, \tilde y_n(\tilde x,u)),
\]
then 
\begin{equation}
\label{mudanca vertical controle}
\tilde y_{i}(\tilde x,u) = a_{ij}(x(\tilde x)) y_j(x(\tilde x),u)
\end{equation}
due to (\ref{mudancafibravertical}).
From (\ref{mudanca vertical controle}), it is clear that if Conditions (1), (2) and (3) of Definition \ref{horizontally smooth} hold with respect to $\phi_1$, then they hold for every coordinate system on $U$. 
Therefore the concept of Pontryagin type $C^0$-Finsler structure doesn't depend on the choice of coordinate systems.
\end{remark}

\begin{remark}
\label{mudanca de coordenadas - trivializacao local}
Let $U$ be an open subset of the $C^0$-Finsler manifold $(M,F)$, $\phi=(x^1, \ldots,$  $x^n): U \rightarrow \mathbb R^n$ be a coordinate system and $\tau: TU \rightarrow U \times \mathbb R^n$ be a local trivialization of $U$. 
The smooth map
$\phi_\tau := (\phi \times \id)\circ \tau = (x^1, \ldots, x^n, \tilde y^1, \ldots, \tilde y^n): TU \rightarrow \phi(U) \times \mathbb R^n$ is a coordinate system on $TU$.
The coordinate changes from $\phi_{TU}$ to $(\phi \times \id)\circ \tau $ is given by
$(x^1, \ldots, x^n, y^1, \ldots, y^n) \mapsto (x^1, \ldots, x^n, \tilde y^1, \ldots, \tilde y^n)$, where (\ref{mudancafibravertical}) are in place because the coordinate changes are also fiberwise linear.
If we proceed as in Remark \ref{coordinate changes trivializations}, we have that the family of vector fields $X_u$ parameterized by $u$ is given by
\[
(x,u) \mapsto (x, \tilde y^1(x,u), \ldots, \tilde y^n(x,u)),
\] 
and it satisfies Conditions (1), (2) and (3) of Definition \ref{horizontally smooth} iff these conditions are also satisfied with respect to $\phi_{TU}$ .
From this remark it is straightforward that if $\phi$ and $\tilde \phi$ are coordinate systems on $U$ and $\tau$ and $\tilde \tau$ are local trivializations of $TU$, then $X_u$ satisfies Conditions (1), (2) and (3) of Definition \ref{horizontally smooth} with respect to the coordinate system $\phi_\tau$ iff these conditions are satisfied with respect to ${\tilde \phi}_{\tilde \tau}$.
Therefore Definition \ref{horizontally smooth} could be presented in a (a priori) more general format, but we defined in this way for the sake of simplicity.
\end{remark}

\section{The extended geodesic field}
\label{Hamiltonian formalism}

In this section we define the extended geodesic field $\mathcal E$ on $T^\ast M$ for Pontryagin type $C^0$-Finsler manifolds, which is obtained applying the PMP on the control system given in Definition \ref{horizontally smooth}.
Geodesics parametrized by arclength are projections of Pontryagin extremals of the differential inclusion $\gamma^\prime(t) = \mathcal E(\gamma(t))$ on $M$.
We will see some basic situations where $\mathcal E$ is a vector field as well as some of its properties.
In Theorem \ref{E generaliza G} we prove that $F_\ast .\mathcal E$ is a direct generalization of the geodesic spray of Finsler geometry, where $F_\ast:T^\ast M \rightarrow \mathbb R$ is the fiberwise dual norm of $F$.
We end this section making a comparison between $\mathcal E$ and $F_\ast . \mathcal E$ and the potential usefulness of each object.

The definition of $\mathcal E$ can be obtained from Definition \ref{horizontally smooth} following the calculations and remarks of \cite{Sachkov}, but we make the calculations explicitly here for the sake of completeness.

Let $(M^n,F)$ be a Pontryagin type $C^0$-Finsler manifold.
Let $\phi = (x^1, \ldots,$ $x^n): U \rightarrow \mathbb R^n$ be a coordinate system on $U$.
Define a control system on $M$ according to Definition \ref{horizontally smooth}.
We are interested to study geodesics on $U$.
Denote 
\[
X_u(x) = \sum_{i=1}^n f^i(x,u) \frac{\partial}{\partial x^i} = \left( f^1(x,u), \ldots, f^n(x,u) \right).
\]
The problem of minimizing the length of a path connecting two points in $(M,F)$ is a time minimizing problem of the control system
\begin{equation}
\label{sistema de controle 3}
x^\prime (t) = X_{u(t)},\;\; t \in [0,l],\;\; l\text{ is not fixed,}
\end{equation}
on $M$ because every $X_u$ is a unit vector field. 
Here the class of admissible controls $u(t)$ is the set of (bounded) measurable functions.

Set $\hat U = \mathbb R \times U$ and fix the coordinate system $\hat \phi=(x^0, x^1, \ldots, x^n)$ on $\hat U$, where $\mathbb R$ is parameterized by its canonical coordinate $x^0$.
Define the vector field 
\[
\hat X_u=\frac{\partial }{\partial x^0} + \sum_{i=1}^n f^i(x,u) \frac{\partial}{\partial x^i}
\]
on $\hat U$. 
It is immediate that $\hat X_u$ satisfy Conditions 1, 2 and 3 of PMP in any coordinate system on $\hat U$ due to Remark \ref{coordinate changes trivializations}.
Let $\hat \phi_{T^\ast \hat U}$ be the natural coordinate system on $T^\ast \hat U$ with respect to $\hat \phi$, which is given by
\[
\left( \hat \phi^{-1}(x^0, \ldots, x^n), \sum_{i=0}^n \alpha_i dx^i \right) \mapsto (x^0, \ldots, x^n, \alpha_0, \ldots, \alpha_n).
\] 

Let $\hat \theta$ be the tautological $1$-form on $T^\ast \hat U$ and for each $u \in C$ define the Hamiltonian $\hat H_u=\hat \theta(\hat X_u)$ on $T^\ast \hat U$ with respect to $u$.
Define $\hat {\mathcal M} = \sup_u \hat H_u$.
Let $\hat \omega=d\hat \theta$ be the canonical symplectic form on $T^\ast \hat U$.
The symplectic form induces an isomorphism between the tangent and cotangent bundles of $T^\ast \hat U$ given by $X \mapsto \hat \omega (\cdot, X)$.
If $X$ is a vector field on $T^*\hat U$, we denote the correspondent $1$-form by $X^\flat$ and if $\alpha$ is a $1$-form we denote the correspondent vector field by $\alpha^\sharp$.
The Hamiltonian vector field with respect to $u$ is given by $\vec {{\hat H}}_u=(d\hat H_u)^\sharp$.

Denote the tautological $1$-form and the canonical symplectic form on $T^\ast U$ by $\theta$ and $\omega$ respectively. 
Denote the objects of $T^\ast U$ corresponding to the respective objects of $T^\ast \hat U$ without the ``hat'' (For instance, $H_u$ is the Hamiltonian on $T^\ast U$ corresponding to $\hat H_u$).

The PMP for this problem states that if $u(t)$ and the respective solution $x(t)$ of (\ref{sistema de controle 3}) is a length minimizer, then there exists an absolutely continuous curve $\hat \gamma(t) = (t, x(t), \alpha_0(t), \alpha(t))$ on $T^\ast \hat U$ such that

\begin{enumerate}
\item $(\alpha_0(t), \alpha(t))$ $\neq 0$; 
\item $\hat \gamma^\prime(t) = {\vec {\hat H}}_{u(t)} (\hat \gamma(t))$ \text{ and }
\item $\hat H_{u(t)}(\hat \gamma(t))=\max_{u \in C}\hat H_u(\hat \gamma(t))$
\end{enumerate} 
almost everywhere. 
Moreover we have that
\[
\alpha_0(l) \leq 0 \text{ and }\hat{\mathcal M}(l)=0,
\]
at the terminal time $l$. In addition, if $u(t)$, $x(t)$ and $(\alpha_0(t), \alpha(t))$ determine an integral path of ${\vec {\hat H}}_u$ such that $\hat H_{u(t)}(\hat \gamma(t))=\max_{u \in C}\hat H_u(\hat \gamma(t))$ a.e., then $\alpha_0(t)$ and $\hat{\mathcal M}(t)$ are constant.

The following proposition is given in order to eliminate the term ``$\mathbb R$'' of $\hat U = \mathbb R \times U$. 

\begin{proposition}
\label{produto}
Let $M$ and $N$ be differentiable manifolds and let $C \subset \mathbb R^k$ be the  control set.
For every $u \in C$, define $C^1$ vector fields $X_u$ and $Y_u$ on $M$ and $N$ respectively such that Conditions 1, 2 and 3 of Definition \ref{horizontally smooth} are in place
(Here we don't impose any condition on the norm of the vector field because the $C^0$-Finsler structure isn't considered).
Define $H_{u,M} = \theta_M(X_u)$ and $H_{u,N} = \theta_N(X_u)$, where $\theta_M$ and $\theta_N$ are the tautological $1$-forms on $T^*M$ and $T^*N$ respectively.
Define $\vec{H}_{u,M}$ and $\vec{H}_{u,N}$ implicitly as $\omega_M (\cdot, \vec{H}_{u,M}) = dH_{u,M}$ and $\omega_N (\cdot, \vec{H}_{u,N}) = dH_{u,N}$ respectively, where $\omega_M$ and $\omega_N$ are the canonical symplectic forms on $M$ and $N$ respectively. 
If we consider the vector field $(X_u,Y_u)$ on $M \times N$ and $\vec{H}_{u,M\times N}$ is the respective Hamiltonian vector field on $T^*(M \times N)$, then $\vec{H}_{u,M \times N} = (\vec{H}_{u,M}, \vec{H}_{u,N})$.
\end{proposition}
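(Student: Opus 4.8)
The plan is to carry out the computation in natural coordinates, exactly as was done to obtain (\ref{campohamiltonianocoordenadas}), and to exploit the fact that the coefficients of $X_u$ depend only on the base point of $M$ while those of $Y_u$ depend only on the base point of $N$, so that no mixed terms survive.

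First I would fix coordinate systems $\phi = (x^1,\ldots,x^m)$ on an open subset of $M$ and $\psi = (z^1,\ldots,z^n)$ on an open subset of $N$, with associated natural coordinate systems $(x^1,\ldots,x^m,\alpha_1,\ldots,\alpha_m)$ on $T^\ast M$ and $(z^1,\ldots,z^n,\beta_1,\ldots,\beta_n)$ on $T^\ast N$. Under the canonical identification $T^\ast(M\times N)\cong T^\ast M\times T^\ast N$, the pair $(x,z)$ is a coordinate system on $M\times N$ and $(x,z,\alpha,\beta)$ is its natural coordinate system on $T^\ast(M\times N)$; moreover the tautological $1$-form and the canonical symplectic form split as $\theta_{M\times N} = \sum_i \alpha_i\, dx^i + \sum_j \beta_j\, dz^j$ and $\omega_{M\times N} = \sum_i d\alpha_i\wedge dx^i + \sum_j d\beta_j\wedge dz^j$. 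Writing $X_u = \sum_i f^i(x,u)\,\partial/\partial x^i$ and $Y_u = \sum_j h^j(z,u)\,\partial/\partial z^j$, the vector field $(X_u,Y_u)$ on $M\times N$ then has Hamiltonian $H_{u,M\times N} = \sum_i \alpha_i f^i(x,u) + \sum_j \beta_j h^j(z,u)$, which is the sum of the pullbacks of $H_{u,M}$ and $H_{u,N}$.

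Next I would differentiate $H_{u,M\times N}$ and apply the same computation that produced (\ref{campohamiltonianocoordenadas}). The key observation is that $f^i$ does not depend on $z$ and $h^j$ does not depend on $x$, hence $\partial f^i/\partial z^j = 0$ and $\partial h^j/\partial x^i = 0$, so no mixed terms appear and
\[
\vec{H}_{u,M\times N} = \sum_i f^i \frac{\partial}{\partial x^i} - \sum_i \Bigl( \sum_k \alpha_k \frac{\partial f^k}{\partial x^i} \Bigr) \frac{\partial}{\partial \alpha_i} + \sum_j h^j \frac{\partial}{\partial z^j} - \sum_j \Bigl( \sum_k \beta_k \frac{\partial h^k}{\partial z^j} \Bigr) \frac{\partial}{\partial \beta_j}.
\]
The first two sums are precisely the coordinate expression of $\vec{H}_{u,M}$, viewed inside $T(T^\ast(M\times N))$ through the identification above, and the last two are that of $\vec{H}_{u,N}$; this gives $\vec{H}_{u,M\times N} = (\vec{H}_{u,M},\vec{H}_{u,N})$. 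Finally, since the Hamiltonian formalism is intrinsic (following \cite{Sachkov}), the identity holds in every chart of the form $\phi\times\psi$, and such charts cover $M\times N$, so the identity holds globally.

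There is no serious obstacle here; the argument is essentially bookkeeping. The only point that deserves a careful word is the canonical identification $T^\ast(M\times N)\cong T^\ast M\times T^\ast N$ together with the induced splittings of $\theta$ and $\omega$; once these are in place, the vanishing of the cross partial derivatives does all the work, and the $C^1$ regularity of $X_u$ and $Y_u$ is exactly what is needed for the coefficient functions appearing above to be continuous.
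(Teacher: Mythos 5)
Your proposal is correct and follows essentially the same route as the paper's own proof: fix product coordinates on $M\times N$, use the induced natural coordinates on $T^\ast(M\times N)$, and repeat the computation that led to (\ref{campohamiltonianocoordenadas}), noting that the splitting of $\theta$, $\omega$, and the absence of cross-derivatives make the Hamiltonian vector field decompose. You simply spell out details (the splitting of $\theta_{M\times N}$ and $\omega_{M\times N}$, the vanishing of $\partial f^i/\partial z^j$ and $\partial h^j/\partial x^i$) that the paper leaves as ``straightforward.''
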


\begin{proof}
Let $(U_M,(x^1,\ldots,x^m))$ and $(U_N,(\tilde{x}^1, \dots, \tilde{x}^n))$ be coordinate open subsets of $M$ and $N$ respectively and consider the coordinate open subset 
\[
(U_M \times U_N, (x^1,\ldots,x^m,\tilde{x}^1,\ldots, \tilde{x}^n))
\]
on $M \times N$. If we make calculations in coordinate systems, it is straightforward that the Hamiltonian vector field $\vec H_{u,M \times N}$ of $T^*(M \times N)$ is given by $(\vec H_{u,M}, \vec H_{u,N})$.
\end{proof}

Proposition \ref{produto} states that the projection of an integral curve of ${\vec {\hat H}}_{u(t)}$ to $M$ is also the projection of an integral curve of $\vec H_{u(t)}$. 
The key ingredient to define the extended geodesic field for Pontryagin type $C^0$-Finsler structure is the equality
\[
\hat H_{u(t)}(\hat \gamma(t)) = \max_{u \in C} \hat H_u(\hat \gamma(t)) \text{ a.e..}
\]
We have that
\[
\hat H_u = \hat \theta (\hat X_u) = \left( \alpha_0 dx_0, \alpha\right)\left( \frac{\partial}{\partial x_0}, X_u \right) = \alpha_0 + \alpha (X_u).
\]
But $\alpha_0(t)$ is a constant. 
Therefore for every $(x, \alpha) \in T^\ast U \backslash 0$, we must find a $u \in C$ such that $\alpha (X_u(x))$ is the maximum. But $S_F[x,0,1] = \{ X_u(x),u \in C \}$. Then
\[
\max_u \alpha(X_u(x)) = \max_{y \in S_F[x,0,1]} \alpha(y).
\]
Now we are in position to define the extended geodesic field.

\begin{definition}
\label{define geodesic field}
Let $(M,F)$ be a Pontryagin type $C^0$-Finsler manifold. 
The extended geodesic field of $(M,F)$ is the rule $\mathcal E$ that associates each $(x,\alpha) \in T^*M\backslash 0$ to the set $\mathcal E(x, \alpha) = \{\vec H_u(x, \alpha);u \in \mathcal C(x,\alpha)\}$, where $\mathcal C(x,\alpha) = \{u \in C;$ $ H_u(x,\alpha) = \max_{v\in C}H_v(x,\alpha) \}$.  Pontryagin extremals are absolutely continuous curves $\gamma:[a,b] \rightarrow T^\ast M\backslash 0$ which are solutions of the differential inclusion
\begin{align}
\label{differential inclusion}
\gamma^\prime(t)=\mathcal E(\gamma(t)).
\end{align}
\end{definition}

\begin{remark}
The definition of absolutely continuous curve on a differentiable manifold doesn't depend on the choice of the Riemannian metric (or $C^0$-Finsler structure) on $M$ because every pair of Riemannian metrics are locally Lipschitz equivalent. 
\end{remark}

The following theorem is fundamental for this work. 
Remember that every non-trivial minimizing curve on $(M,F)$ can be reparameterized by arclength (see \cite{Burago}). 

\begin{theorem}
\label{minimizing}
Let $(M,F)$ be a Pontryagin type $C^0$-Finsler manifold. Then every minimizing curve $x(t)$ of $(M,F)$ parameterized by arclength is the projection of a Pontryagin extremal $(x(t),\alpha(t))$.
Consequently the Hamiltonian $H_{u(t)}(x(t),\alpha(t))$ is constant.
\end{theorem}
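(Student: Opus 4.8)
The plan is to feed the objects constructed in this section directly into the version of PMP recalled above. Since $H_u$, $\vec H_u$ and $\mathcal E$ are intrinsic to $T^\ast M$ and the manifold formulation of PMP (cf.\ \cite{Sachkov}) produces the adjoint curve globally on $T^\ast M\backslash 0$, and since the restriction of a minimizing curve to any subinterval is again minimizing, it is enough to carry out the computation in a single coordinate neighborhood $U$ as in Definition \ref{horizontally smooth}, using the chart $\phi$; for a general minimizing curve one covers its (compact) image by finitely many such charts. So assume $x:[a,b]\to U$ is minimizing and parameterized by arclength.

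The first step is to realize $x(\cdot)$ as a trajectory of the control system (\ref{sistema de controle 3}). A curve parameterized by arclength is locally Lipschitz, hence absolutely continuous, and $F(x(t),x'(t))=1$, i.e.\ $x'(t)\in S_F[x(t),0,1]$, for almost every $t$. By Condition (1) of Definition \ref{horizontally smooth} there is a unique $u(t)\in S^{n-1}$ with $X_{u(t)}(x(t))=x'(t)$; since by Conditions (1) and (2) the assignment $(x,u)\mapsto X_u(x)$ is a continuous family of homeomorphisms out of the compact space $S^{n-1}$, its inverse is jointly continuous, so $t\mapsto u(t)$ is measurable, i.e.\ an admissible control, and $x(\cdot)$ solves (\ref{sistema de controle 3}) for it. Because each $X_u$ is a unit vector field, every admissible trajectory has length equal to its travel time; hence the hypothesis that $x(\cdot)$ minimizes length translates into: $x(\cdot)$ is a time minimizer of (\ref{sistema de controle 3}).

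The second step is to apply PMP to the augmented system $\hat X_u$ on $\hat U=\mathbb R\times U$. This yields an absolutely continuous curve $\hat\gamma(t)=(x^0(t),x(t),\alpha_0(t),\alpha(t))$ on $T^\ast\hat U$ with $(\alpha_0,\ldots,\alpha_n)\neq 0$, with $\hat\gamma'(t)={\vec{\hat H}}_{u(t)}(\hat\gamma(t))$ and $\hat H_{u(t)}(\hat\gamma(t))=\max_{u\in C}\hat H_u(\hat\gamma(t))$ a.e., and, by the final clause of PMP together with $\alpha_0(l)\le 0$ and $\hat{\mathcal M}(l)=0$, with $\alpha_0(t)\equiv\alpha_0(l)\le 0$ and $\hat{\mathcal M}(t)\equiv 0$. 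Using $\hat H_u=\alpha_0+H_u$ and $\{X_u(x):u\in C\}=S_F[x,0,1]$ one rewrites $\hat{\mathcal M}(t)\equiv 0$ as $\max_{y\in S_F[x(t),0,1]}\alpha(t)(y)=-\alpha_0$. The one point needing a short argument is that $\alpha(t)\neq 0$: since $B_F[x(t),0,1]$ is a convex body with $0$ in its interior, any nonzero covector pairs positively with some point of $S_F[x(t),0,1]$, so if $\alpha_0=0$ then the displayed maximum is $0$ and we would get $\alpha(t)\equiv 0$, hence $(\alpha_0,\alpha)\equiv 0$, which is impossible; therefore $\alpha_0<0$, and then $\max_{y}\alpha(t)(y)=-\alpha_0>0$ already forces $\alpha(t)\neq 0$. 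Thus $(x(t),\alpha(t))\in T^\ast M\backslash 0$.

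The final step is to project out the $\mathbb R$ factor. By Proposition \ref{produto} (with the factor $\mathbb R$ in the role of one of the manifolds) the projection $\mathbb R\times U\to U$ carries $\hat\gamma$ to an integral curve $(x(t),\alpha(t))$ of the Hamiltonian field $\vec H_{u(t)}$ of (\ref{campohamiltonianocoordenadas2}). Subtracting the constant $\alpha_0$ turns the maximum condition into $H_{u(t)}(x(t),\alpha(t))=\max_{u\in C}H_u(x(t),\alpha(t))$, i.e.\ $u(t)\in\mathcal C(x(t),\alpha(t))$, so $(x(t),\alpha(t))'=\vec H_{u(t)}(x(t),\alpha(t))\in\mathcal E(x(t),\alpha(t))$ for a.e.\ $t$; by Definition \ref{geodesic flow}, $(x(t),\alpha(t))$ is an integral curve of $\mathcal E$ whose projection to $M$ is $x(\cdot)$. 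Finally $H_{u(t)}(x(t),\alpha(t))=\hat{\mathcal M}(t)-\alpha_0(t)=-\alpha_0$ is constant, which is the last assertion. I expect the main obstacle to be the globalization, namely stitching the chartwise adjoint curves into a single curve on $T^\ast M\backslash 0$; modulo that (and the small non-degeneracy argument for $\alpha$), the proof is essentially bookkeeping around the verbatim PMP statement recalled above.
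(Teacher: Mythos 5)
Your proposal is correct and follows essentially the same route as the paper: realize the arclength-parameterized minimizer as a trajectory of the control system (the one nonroutine point being the measurability of the reconstructed control $t\mapsto u(t)$), invoke PMP on the augmented system over $\hat U=\mathbb R\times U$, and descend to $T^\ast U$ via Proposition \ref{produto}. The paper's written proof concentrates almost entirely on the measurability step, exhibiting $u(t)$ as the composition $\pi\circ\eta_2\circ\eta_1$ with $\eta_2$ the inverse of a continuous bijection from $\overline{U\times S^{n-1}}$ (compact) onto $\overline{SU}$ (Hausdorff); you treat that step more briefly but supply a detail the paper leaves implicit, namely the short convexity argument that $\alpha_0<0$ and hence $\alpha(t)\neq 0$, so that $(x(t),\alpha(t))$ genuinely lies in $T^\ast M\backslash 0$ as required by Definition \ref{geodesic flow}.
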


\begin{proof}
For the proof of this theorem, it is enough to find an admissible control $u(t)$ such that $X_{u(t)}(x(t)) = x^\prime (t)$ due to the PMP  and the definition of $\mathcal E$.
Since the measurability of $t \mapsto u(t)$ is a local property, we can prove it in an open subset $U$ which is compactly embedded in a coordinate open subset $(U^\prime, \phi=(x^1, \ldots, x^n))$.
We denote the unit fiber bundle of $(U,F)$ by $SU$. 

We represent the map $t \mapsto u(t)$ as a composition of three maps.
The first map is the projection $\pi : U \times S^{n-1} \rightarrow S^{n-1}$, where $S^{n-1}$ is the Euclidean unit sphere in $\mathbb R^n$. 
The last one is $t\mapsto (x(t), x^\prime(t))$, which we denote by $\eta_1:[a,b] \rightarrow SU$.
Observe that $\eta_1$ is measurable because $x(t)$ is locally Lipschitz (see \cite{Royden}).
The second map is contructed as follows: 
We know from Items (1) and (2) of Definition \ref{horizontally smooth} that the map $(x,u)\mapsto X_u(x)$ is a continuous bijection from the compact space $\overline{U \times S^{n-1}}$ onto the Hausdorff space $\overline{SU}$. 
Therefore it is a homeomorphism.
The second map is defined as its inverse map $\eta_2: SU \rightarrow U \times S^{n-1}$.
Finally observe that $t \mapsto u(t)$ is measurable because it is given by the composition of measurable functions $\pi \circ \eta_2 \circ \eta_1$, what settles the theorem.
\end{proof}

{\em From now on the Einstein convention for the summation of indices is in place because the indices will not vary from $0$ to $n$ anymore.}

Direct calculations in natural coordinate systems $\phi_{TU} = (x^1, \ldots,$ $x^n, y^1, \ldots, y^n)$ and $\phi_{T^\ast U} = (x^1, \ldots, x^n, \alpha_1,$ $\ldots, \alpha_n)$ give
\[
\theta = \alpha_i dx^i,\hspace{4mm} \omega = d\alpha_i \wedge dx^i, \hspace{4mm}
H_u = \alpha_i f^i(x,u),
\]
\begin{equation}
\label{campohamiltonianocoordenadas2}
\vec H_u = f^i(x,u) \frac{\partial}{\partial x^i} - \alpha_j \frac{\partial f^j}{\partial x^i}(x,u) \frac{\partial}{\partial \alpha_i}
\end{equation}
and

\begin{equation}
\label{define-extended}
\mathcal E(x,\alpha) = f^i(x,u(x,\alpha)) \frac{\partial}{\partial x^i} - \alpha_j \frac{\partial f^j}{\partial x^i}(x,u(x,\alpha)) \frac{\partial}{\partial \alpha_i},
\end{equation}
where $u(x,\alpha) \in \mathcal C(x,\alpha)$.

The following propositions are straightforward from the definition of $\mathcal E$.

\begin{proposition}
Let $(M,F)$ be a Pontryagin type $C^0$-Finsler manifold. If $F(x,\cdot)$ is strictly convex for every $x\in M$, then $\mathcal{E}$ is a vector field on $T^*M \backslash 0$.
\end{proposition}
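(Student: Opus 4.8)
The plan is to show that when $F(x,\cdot)$ is strictly convex at every $x$, the set $\mathcal C(x,\alpha)$ consists of a single point for each $(x,\alpha) \in T^*M \backslash 0$, so that $\mathcal E(x,\alpha)$ is a singleton and hence defines a genuine vector field via formula (\ref{define-extended}). The first step is to recall from the discussion preceding Definition \ref{geodesic flow} that
\[
\max_{u \in C} H_u(x,\alpha) = \max_{u \in C} \alpha(X_u(x)) = \max_{y \in S_F[x,0,1]} \alpha(y),
\]
so that $u \in \mathcal C(x,\alpha)$ if and only if the unit vector $X_u(x)$ attains $\max_{y \in S_F[x,0,1]} \alpha(y)$. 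Since $\alpha \neq 0$, this maximum is strictly positive (indeed, for any $y$ with $\alpha(y) < 0$, the vector attaining the max on the opposite side does better; more precisely $\alpha$ restricted to the compact sphere $S_F[x,0,1]$ is a nonzero linear functional, so its max over the sphere exceeds its value at any interior-direction point).

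The heart of the argument is the following claim: a nonzero linear functional $\alpha$ attains its maximum over the sphere $S_F[x,0,1]$ at exactly one point, provided $F(x,\cdot)$ is strictly convex. I would prove this by a standard convexity argument. Suppose $y_1 \neq y_2$ both lie in $S_F[x,0,1]$ with $\alpha(y_1) = \alpha(y_2) = m := \max_{y \in S_F[x,0,1]} \alpha(y) > 0$. Consider the midpoint $z = \tfrac12(y_1 + y_2)$. By the triangle inequality $F(x, z) \le \tfrac12 F(x,y_1) + \tfrac12 F(x,y_2) = 1$, and by strict convexity of $F(x,\cdot)$ — which, since $y_1 \neq y_2$ and both have $F$-value $1$, forces $F(x,z) < 1$. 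Now rescale: the vector $z / F(x,z)$ lies on $S_F[x,0,1]$, and since $F(x,z) < 1$ while $\alpha(z) = m > 0$, we get $\alpha\!\left(z/F(x,z)\right) = m / F(x,z) > m$, contradicting the maximality of $m$. (One must check that $z \neq 0$: if $z = 0$ then $y_2 = -y_1$, and then $\alpha(y_1) = \alpha(y_2)$ gives $\alpha(y_1) = 0 = m$, contradicting $m > 0$.) Hence the maximizing unit vector is unique; call it $y(x,\alpha) \in S_F[x,0,1]$.

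Finally, by Condition (1) of Definition \ref{horizontally smooth} the map $u \mapsto X_u(x)$ is a bijection from $S^{n-1}$ onto $S_F[x,0,1]$, so there is a unique $u(x,\alpha) \in C = S^{n-1}$ with $X_{u(x,\alpha)}(x) = y(x,\alpha)$; that is, $\mathcal C(x,\alpha) = \{u(x,\alpha)\}$ is a singleton. Consequently $\mathcal E(x,\alpha) = \{\vec H_{u(x,\alpha)}\}$ contains exactly one tangent vector, and by (\ref{campohamiltonianocoordenadas2}) and (\ref{define-extended}) this assignment $(x,\alpha) \mapsto \mathcal E(x,\alpha)$ is a well-defined section of $T(T^*M \backslash 0)$, i.e.\ a vector field on $T^*M \backslash 0$. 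The main (and essentially only) obstacle is the uniqueness claim above; everything else is bookkeeping with the definitions already established. Note that no continuity of the vector field is asserted in the statement, so the argument stops here; regularity of $u(x,\alpha)$ is the subject of the later sections.
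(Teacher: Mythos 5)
Your proof is correct. The paper declares this proposition ``straightforward from the definition of $\mathcal E$'' and gives no argument; the elementary midpoint/rescaling argument you supply is the obvious one and matches the uniqueness statement the paper later re-derives (Proposition \ref{unico vetor que maximiza o funcional caso estritamente convexo}) by the heavier subdifferential--Fenchel-conjugate machinery.
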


\begin{proposition}
If $u(x,\alpha)$ is a continuous function and $\mathcal{E}$ is a vector field, then $\mathcal E$ is continuous.
\end{proposition}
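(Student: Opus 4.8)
The plan is to check continuity of $\mathcal E$ coefficientwise in a natural coordinate system and then appeal to the fact that continuity of a vector field is local and coordinate-independent. So first I would fix $p\in M$ and a coordinate system $\phi=(x^1,\dots,x^n)$ on a neighborhood $U$ of $p$ as in Definition \ref{horizontally smooth}, together with the induced natural coordinates $(x^1,\dots,x^n,\alpha_1,\dots,\alpha_n)$ on $T^\ast U$. On $T^\ast U\backslash 0$ the vector field $\mathcal E$ is given by formula (\ref{define-extended}), so its coefficients are the functions $(x,\alpha)\mapsto f^i(x,u(x,\alpha))$ and $(x,\alpha)\mapsto -\alpha_j\,\frac{\partial f^j}{\partial x^i}(x,u(x,\alpha))$, and it suffices to prove that each of these is continuous on $T^\ast U\backslash 0$.

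Next I would invoke the regularity built into the definition of Pontryagin type. Since $f^i(x,u)=y^i(x^1,\dots,x^n,u)$, Condition (2) of Definition \ref{horizontally smooth} says that $(x,u)\mapsto f^i(x,u)$ is continuous on $U\times S^{n-1}$, and Condition (3) says that $(x,u)\mapsto\frac{\partial f^j}{\partial x^i}(x,u)$ is continuous on $U\times S^{n-1}$ for all $i,j$. By hypothesis the selection $(x,\alpha)\mapsto u(x,\alpha)\in S^{n-1}$ is continuous, hence so is $(x,\alpha)\mapsto(x,u(x,\alpha))$; composing with the two continuous maps above shows that $(x,\alpha)\mapsto f^i(x,u(x,\alpha))$ and $(x,\alpha)\mapsto\frac{\partial f^j}{\partial x^i}(x,u(x,\alpha))$ are continuous on $T^\ast U\backslash 0$. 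Multiplying the latter by the continuous coordinate function $\alpha_j$ and summing over $j$ preserves continuity, so both families of coefficients of $\mathcal E$ are continuous.

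Finally I would close the argument by two remarks. The hypothesis that $\mathcal E$ is a vector field guarantees that the right-hand side of (\ref{define-extended}) is independent of the choice of $u\in\mathcal C(x,\alpha)$, so it is legitimate to evaluate it along the given continuous selection $u(x,\alpha)$; and since the transition maps between natural coordinate systems on $T^\ast M$ are smooth (in fact fiberwise linear, cf.\ Remark \ref{coordinate changes trivializations}), continuity of the coefficients in one such chart forces continuity in every chart, hence continuity of $\mathcal E$ on all of $T^\ast M\backslash 0$. I do not expect a genuine obstacle here: the statement reduces to "a composition of continuous maps is continuous", and the only point requiring attention is that the needed regularity of the $f^i$ and of their base-coordinate derivatives is exactly what Conditions (2) and (3) of Definition \ref{horizontally smooth} supply.
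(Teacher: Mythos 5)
Your proposal is correct and fills in exactly the argument the paper leaves implicit: the paper states this proposition (together with its neighbors) as ``straightforward from the definition of $\mathcal E$'' and gives no further proof, and your coefficientwise composition-of-continuous-maps argument using Conditions (2) and (3) of Definition \ref{horizontally smooth} is precisely the intended reasoning.
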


\begin{proposition}
If $\frac{\partial f^j}{\partial x^i}$ and $u(x,\alpha)$ are locally Lipschitz functions, then $\mathcal E$ is a locally Lipschitz vector field.
\end{proposition}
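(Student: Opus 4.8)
The plan is to prove the statement in one fixed natural coordinate system on $T^\ast M\backslash 0$ and then appeal to the fact that being a locally Lipschitz vector field is a coordinate-independent notion. Recall that a vector field $V$ on a $C^\infty$ manifold is locally Lipschitz if and only if its components are locally Lipschitz functions in the charts of some atlas; this is well posed because overlapping charts are related by $C^\infty$ transition maps, the components transform linearly with smooth (hence locally Lipschitz and locally bounded) coefficients, and the class of locally Lipschitz functions is closed under composition, finite sums, and finite products of locally bounded functions. On $T^\ast M\backslash 0$ the natural coordinates $(x^1,\dots,x^n,\alpha_1,\dots,\alpha_n)$ attached to a chart of $M$ have transition maps $\tilde x=\tilde x(x)$, $\tilde\alpha_j=\frac{\partial x^i}{\partial\tilde x^j}\alpha_i$, which are again $C^\infty$, so the same reduction is available here. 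Note also that since $u(x,\alpha)$ is assumed to be a (single-valued) function with $u(x,\alpha)\in\mathcal C(x,\alpha)$, the rule $\mathcal E$ is genuinely a vector field given by (\ref{define-extended}) and only its regularity is in question. Thus it suffices to show that, in the natural coordinates attached to a chart $\phi=(x^1,\dots,x^n)\colon U\to\mathbb R^n$ as in Definition \ref{horizontally smooth}, the coefficients $f^i(x,u(x,\alpha))$ and $-\alpha_j\frac{\partial f^j}{\partial x^i}(x,u(x,\alpha))$ in (\ref{define-extended}) are locally Lipschitz functions of $(x,\alpha)$.

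Next I would isolate the map $\Phi\colon(x,\alpha)\mapsto(x,u(x,\alpha))$ from an open subset of $U\times\mathbb R^n$ into $U\times S^{n-1}$. Its first component is the identity and its second is $u(x,\alpha)$, locally Lipschitz by hypothesis, so $\Phi$ is locally Lipschitz. The $\partial/\partial\alpha_i$-coefficient of $\mathcal E$ is then $-\sum_j\alpha_j\,\bigl(\frac{\partial f^j}{\partial x^i}\circ\Phi\bigr)$: the factor $\frac{\partial f^j}{\partial x^i}\circ\Phi$ is a composition of locally Lipschitz maps, hence locally Lipschitz; the coordinate functions $\alpha_j$ are smooth, hence locally Lipschitz and locally bounded; so each product is locally Lipschitz and so is the finite sum over $j$. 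The $\partial/\partial x^i$-coefficient is $f^i\circ\Phi$. This parallels, and strengthens to Lipschitz regularity, the continuity statement in the preceding proposition.

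The only point that needs genuine care — and the only place the bare $C^1$-regularity of Definition \ref{horizontally smooth} does not suffice — is the joint regularity of the maps $(x,u)\mapsto f^i(x,u)$ and $(x,u)\mapsto\frac{\partial f^j}{\partial x^i}(x,u)$ that are being composed with $\Phi$. For the $\partial/\partial\alpha_i$-coefficient one uses precisely the hypothesis that $\frac{\partial f^j}{\partial x^i}$ is locally Lipschitz on $U\times S^{n-1}$; for the $\partial/\partial x^i$-coefficient one needs each $f^i$ to be locally Lipschitz there (not merely $C^1$ in $x$), which under Definition \ref{horizontally smooth} follows by combining the (locally bounded) $x$-derivatives — giving Lipschitz dependence on $x$ uniformly on compact sets — with the Lipschitz dependence on $u$ supplied by the hypothesis. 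Granting this, $f^i\circ\Phi$ is again a composition of locally Lipschitz maps, hence locally Lipschitz, and both families of coefficients are locally Lipschitz; so $\mathcal E$ is a locally Lipschitz vector field in the chosen natural chart and, by the coordinate-independence recorded in the first paragraph, on all of $T^\ast M\backslash 0$. I do not expect any deep obstacle: the argument is essentially bookkeeping with the stability of the locally Lipschitz class under composition, finite sums, and products of locally bounded functions; the one thing worth making explicit in the write-up is that the Lipschitz regularity is needed for $f^i$ itself and not only for $\frac{\partial f^j}{\partial x^i}$.
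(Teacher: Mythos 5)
The paper offers no written argument for this proposition (it is one of three listed after (\ref{define-extended}) with the remark ``straightforward from the definition of $\mathcal E$''), so your write-up is already more explicit than the source; the issue is that it contains a genuine gap at exactly the spot you flag as ``the only point that needs genuine care.'' You correctly observe that, besides $\alpha_j\frac{\partial f^j}{\partial x^i}(x,u(x,\alpha))$, the $\partial/\partial x^i$-coefficient $f^i(x,u(x,\alpha))$ must also be locally Lipschitz, and that this requires $(x,u)\mapsto f^i(x,u)$ to be locally Lipschitz jointly (continuity from Definition \ref{horizontally smooth} and a bounded $x$-derivative handle the $x$-variable, but not the $u$-variable). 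However, you then assert that the ``Lipschitz dependence on $u$ [is] supplied by the hypothesis.'' It is not: the proposition's hypotheses concern only $\frac{\partial f^j}{\partial x^i}$ (as a function of $(x,u)$) and $u(x,\alpha)$ (as a function of $(x,\alpha)$). Knowing that a partial $x$-derivative of $f^j$ is Lipschitz says nothing about how $f^j$ itself varies with $u$; Definition \ref{horizontally smooth} only guarantees continuity in $u$, and $u\mapsto X_u(x)$ is merely required to be a homeomorphism onto $S_F[x,0,1]$, not a bi-Lipschitz map. Composing a Lipschitz map $u(x,\alpha)$ with a function $f^i$ that is only continuous in $u$ need not produce a Lipschitz function.

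In other words, the decomposition $\lvert f^i(x_1,u_1)-f^i(x_2,u_2)\rvert\le\lvert f^i(x_1,u_1)-f^i(x_2,u_1)\rvert+\lvert f^i(x_2,u_1)-f^i(x_2,u_2)\rvert$ controls the first term by the locally bounded $x$-derivative, but the second term is not $O(\lvert u_1-u_2\rvert)$ under the stated hypotheses. To close the argument one needs to also assume (or deduce from additional structure, as the paper does later in Theorem \ref{campo geodesico Lipschitz} by taking $X_u=\check X_u/F(\check X_u)$, where the Lipschitz dependence on $u$ comes from the norm $F(x,\cdot)$) that $(x,u)\mapsto f^i(x,u)$ is itself locally Lipschitz on $U\times S^{n-1}$. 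Everything else in your argument — the coordinate-independence reduction, the treatment of $\Phi$, the product $\alpha_j\cdot(\frac{\partial f^j}{\partial x^i}\circ\Phi)$, and the handling of the $\partial/\partial\alpha_i$-coefficient — is sound.
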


In the Riemannian case we have a diffeomorphism between $TM$ and $T^\ast M$ given by the Legendre transform $(x,y) \mapsto (x,y^\flat)$, where the musical isomorphism is defined with respect to  the Riemannian metric restricted to each tangent space. 

In the Finsler case we have the following situation: 
Let $F_\ast : T^\ast M \rightarrow \mathbb R$ be the fiberwise dual norm of $F: TM \rightarrow \mathbb R$.
Then $F_\ast$ is a Finsler structure on $T^\ast M$.
The Legendre transform $TM\backslash 0 \rightarrow T^\ast M \backslash 0$ is a diffeomorphism given by 
\begin{equation}
\label{legendre-Finsler}
(x^1, \ldots, x^n, y^1, \ldots, y^n) \mapsto (x^1, \ldots, x^n,  g_{1j(x,y)} y^j, \ldots, g_{nj(x,y)}y^j)
\end{equation}
and its inverse transform is given by
\begin{equation}
\label{inversa-legendre}
(x^1, \ldots, x^n, \alpha_1, \ldots, \alpha_n) \mapsto (x^1, \ldots, x^n, g^{1j(x,\alpha)} \alpha_j, \ldots, g^{nj(x,\alpha)}\alpha_j),
\end{equation}
where $g_{ij(x,y)}$ are the components of the fundamental tensor of $F$ at $(x,y)$ and $g^{ij(x,\alpha)}$ are the components of the fundamental tensor of $F_\ast$ at $(x,\alpha)$.
Notice that $g^{ij(x,\alpha)} = g^{ij(x,\alpha^\sharp)}$ holds, where the musical isomorphism is given with respect to the fundamental tensor of $F$ 
(see Section 14.8 of \cite{BaoChernShen} for vector spaces endowed with Minkowski norms. 
The extension for $TM\backslash 0$ is straightforward). 

Now we calculate the geodesic spray $\mathcal G$ on $T^\ast M\backslash 0$ for the Finsler case.
Its proof is an adaptation of the Riemannian case. 

\begin{theorem}
\label{caso Finsler} 
Let $(M,F)$ be a Finsler manifold. 
Consider the identification of $TM\backslash 0$ and $T^\ast M \backslash 0$ given by the Legendre transform. 
Then the geodesic spray on $T^\ast M\backslash 0$ through this identification is given by
\begin{equation}
\label{spray no fibrado cotangente}
\mathcal G(x,\alpha) = g^{ik}\alpha_k\frac{\partial}{\partial x^i}-\frac{1}{2}\alpha_j\alpha_k\frac{\partial g^{jk}}{\partial x^i}\frac{\partial}{\partial \alpha_i},
\end{equation}
where $g^{ij}$ are the coefficients of the fundamental tensor of $F_\ast$.
\end{theorem}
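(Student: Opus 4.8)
\emph{Proof plan.} The plan is to transport the classical Finsler geodesic spray on $TM\backslash 0$ to $T^\ast M\backslash 0$ along the Legendre transform and then verify, in coordinates, that the transported field is (\ref{spray no fibrado cotangente}). Write $L\colon TM\backslash 0 \to T^\ast M\backslash 0$ for the Legendre transform and recall that on $TM\backslash 0$ the geodesic spray is $\mathcal S = y^i\frac{\partial}{\partial x^i} - 2G^i\frac{\partial}{\partial y^i}$, where $2G^i = \gamma^i_{\ jk}y^jy^k$ is built from the formal Christoffel symbols (\ref{Christoffel}), and that its integral curves project onto the geodesics of $(M,F)$. Since $L$ is a diffeomorphism, the geodesic spray on $T^\ast M\backslash 0$ through this identification is by definition the push-forward $dL(\mathcal S)$, so the whole content of the theorem is the computation of this push-forward.

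First I would compute the Jacobian of $L$ in natural coordinates. From $\alpha_m = g_{mj(x,y)}y^j$ one gets $\frac{\partial \alpha_m}{\partial x^i} = \frac{\partial g_{mj}}{\partial x^i}y^j$ and $\frac{\partial \alpha_m}{\partial y^i} = g_{mi} + \frac{\partial g_{mj}}{\partial y^i}y^j = g_{mi}$, the cross term vanishing by the Cartan identity (\ref{cartan e y zera}). Hence $dL\bigl(\frac{\partial}{\partial x^i}\bigr) = \frac{\partial}{\partial x^i} + \frac{\partial g_{mj}}{\partial x^i}y^j\frac{\partial}{\partial \alpha_m}$ and $dL\bigl(\frac{\partial}{\partial y^i}\bigr) = g_{mi}\frac{\partial}{\partial \alpha_m}$, so
\[
dL(\mathcal S) = y^i\frac{\partial}{\partial x^i} + \left( y^iy^j\frac{\partial g_{mj}}{\partial x^i} - 2G^i g_{mi} \right)\frac{\partial}{\partial \alpha_m}.
\]
The horizontal part already matches (\ref{spray no fibrado cotangente}), since the inverse Legendre transform (\ref{inversa-legendre}) gives $y^i = g^{ik(x,\alpha)}\alpha_k$; it remains to identify the $\frac{\partial}{\partial \alpha_m}$-coefficient with $-\tfrac12\alpha_j\alpha_k\frac{\partial g^{jk}}{\partial x^m}$.

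Next I would reduce that coefficient on the $TM$ side: substituting (\ref{Christoffel}) and symmetrizing in $j,k$ gives $2G^i = g^{is}\bigl(\frac{\partial g_{sj}}{\partial x^k} - \tfrac12\frac{\partial g_{jk}}{\partial x^s}\bigr)y^jy^k$, hence $2G^i g_{mi} = \bigl(\frac{\partial g_{mj}}{\partial x^k} - \tfrac12\frac{\partial g_{jk}}{\partial x^m}\bigr)y^jy^k$, and the coefficient collapses to $\tfrac12\frac{\partial g_{jk}}{\partial x^m}y^jy^k$. Finally I would pass to the cotangent description, and this is the one delicate point: in (\ref{spray no fibrado cotangente}) the symbol $\frac{\partial}{\partial x^m}$ differentiates with $\alpha$ held fixed, whereas above it was $y$ that was held fixed, and $g^{jk(x,\alpha)}$ is the inverse of $g_{ij}$ evaluated along $y = y(x,\alpha)$, the inverse Legendre transform. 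Differentiating $g^{jk}g_{kl} = \delta^j_l$ with $\alpha$ fixed, writing $\frac{\partial}{\partial x^m}g_{kl}(x,y(x,\alpha)) = \frac{\partial g_{kl}}{\partial x^m} + \frac{\partial g_{kl}}{\partial y^p}\frac{\partial y^p}{\partial x^m}$, contracting with $\alpha_j\alpha_k$ (so that $g^{jp}\alpha_j = y^p$), and invoking (\ref{cartan e y zera}) once more to kill the term containing $\frac{\partial y^p}{\partial x^m}$, yields $\alpha_j\alpha_k\frac{\partial g^{jk}}{\partial x^m} = -\,y^py^q\frac{\partial g_{pq}}{\partial x^m}$. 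Assembling the three steps gives precisely (\ref{spray no fibrado cotangente}). The main obstacle is exactly this bookkeeping: keeping apart the two meanings of $\partial/\partial x^m$ and using the Cartan identity twice---once in the Jacobian of $L$, once in the derivative of the inverse metric---to discard the cross terms.
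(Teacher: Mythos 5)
Your proof is correct, and it follows the same essential route as the paper: transport the geodesic spray along the Legendre transform and use the Cartan identity $\frac{\partial g_{ij}}{\partial y^k}y^j=0$ of (\ref{cartan e y zera}) to kill the cross terms that appear because the symbol $\partial/\partial x^m$ means different things on $TU$ (derivative at fixed $y$) and on $T^*U$ (derivative at fixed $\alpha$). The difference is organizational. The paper differentiates the geodesic equation along an integral curve, solving for $d\alpha_i/dt$, and works throughout with $g^{ij}$ on the $TM$ side, which forces it to invoke $\frac{\partial g^{ij}}{\partial y^k}=-g^{il}g^{mj}\frac{\partial g_{lm}}{\partial y^k}$ of (\ref{derivada gij e inversa}) in the middle of the computation and again at the end when passing from $\partial/\partial \hat x^i$ to $\partial/\partial x^i$. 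You instead compute the Jacobian of $L$ once and push the spray forward, working with $g_{ij}$ on the $TM$ side, and the Cartan identity enters cleanly twice (once in $\partial\alpha_m/\partial y^i=g_{mi}$, once in the final identity $\alpha_j\alpha_k\frac{\partial g^{jk}}{\partial x^m}\big|_\alpha = -y^jy^k\frac{\partial g_{jk}}{\partial x^m}\big|_y$), with the inverse-metric derivative appearing only implicitly in that last step. The two arguments are logically equivalent; yours isolates the delicate coordinate-vector distinction in a single terminal conversion rather than carrying it through several lines, which makes the bookkeeping somewhat easier to audit.
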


\begin{proof}

Let $(M,F)$ be a Finsler manifold. Let $\phi=(x^1, \ldots, x^n):U \subset M \rightarrow \mathbb R^n$ be a coordinate system on a open subset $U$ of $M$ and let $(x^1, \ldots, x^n, y^1, \ldots, y^n)$ and $(x^1, \ldots, y^n, \alpha_1, \ldots, \alpha_n)$ be the natural coordinates on $TU$ and $T^\ast U$ respectively.

There is a subtlety in this proof. Although the coordinate functions $(x^1, \ldots,$ $x^n)$ of $TU$ and $T^\ast U$ are identified, the coordinate vector fields $\partial / \partial x^i$ on these bundles aren't the same in general. 
In order to make this distinction, $\partial /\partial \hat x^i$ will represent the coordinate vector field on $TU$ and $\partial / \partial x^i$ will represent the coordinate vector field on $T^\ast U$.  
Observe that
\begin{equation}
\label{relacaoxichapeuxi}
\frac{\partial}{\partial \hat x^i} = \frac{\partial}{\partial x^i} - \frac{\partial g^{lm}}{\partial x^i}\alpha_m
\frac{\partial}{\partial y^l}
\end{equation}
due to (\ref{inversa-legendre}).

The geodesic equation of a Finsler manifold $(M,F)$ is given by
\begin{equation}
\label{geodesic equation}
\frac{d^2 x^i}{dt^2} + \gamma^i{}_{jk} \frac{dx^j}{dt} \frac{dx^k}{dt}=0,
\end{equation}
where $\gamma^i{}_{jk}$ is given by (\ref{Christoffel}).
Observe that
\begin{equation}
\label{variacao de x}
\frac{dx^i}{dt}=y^i = g^{ik}\alpha_k
\end{equation}
(We can consider $g^{ij}$ as the inverse of the fundamental tensor of $F$ at $(x,y)$ or else the fundamental tensor of $F_\ast$ at $(x,y^\flat)$ because $g^{ij(x,y)} = g^{ij(x,y^\flat)}$).
Moreover
\begin{align}
\frac{d^2 x^i}{dt^2} 
& = \frac{d}{dt}y^i 
= \frac{d}{dt}g^{ij}\alpha_j 
= \frac{dg^{ij}}{dt}\alpha_j + g^{ij}\frac{d\alpha_j}{dt}  = \frac{\partial g^{ij}}{\partial \hat x^k} y^k \alpha_j + \frac{\partial g^{ij}}{\partial y^k} \frac{dy^k}{dt} g_{js} y^s + g^{ij}\frac{d\alpha_j}{dt}. \label{derivada segunda x}
\end{align}
The middle term of the right-hand side of (\ref{derivada segunda x}) is zero.
In fact, this is due to (\ref{cartan e y zera}) and
\begin{equation}
\label{derivada gij e inversa}
\frac{\partial g^{ij}}{\partial y^k} = -g^{il}g^{mj}\frac{\partial g_{lm}}{\partial y^k}.
\end{equation}
The second term of the left-hand side of (\ref{geodesic equation}) is given by
\begin{align}
\gamma^i{}_{jk}y^jy^k 
& = \gamma^i{}_{jk}g^{jl}\alpha_l g^{km}\alpha_m 
=\frac{1}{2}g^{is}\left(\frac{\partial g_{sj}}{\partial \hat x^k} + \frac{\partial g_{sk}}{\partial \hat x^j} - \frac{\partial g_{jk}}{\partial \hat x^s} \right)g^{jl} g^{km}\alpha_l \alpha_m \nonumber \\ 
& = - \frac{\partial g^{ij}}{\partial \hat x^k} y^k \alpha_j + \frac{1}{2}\frac{\partial g^{lm}}{\partial \hat x^s}g^{is}\alpha_l \alpha_m, \label{termo gammaijyjyk}
\end{align}
where the last equation is due to (\ref{derivada gij e inversa}) with $\partial / \partial y^k$ replaced by $\partial / \partial \hat x^k$.
Replacing (\ref{derivada segunda x}) and (\ref{termo gammaijyjyk}) in (\ref{geodesic equation}) we get
\begin{equation}
\label{quase-spray-cotangente}
\frac{d\alpha_i}{dt} = -\frac{1}{2}\frac{\partial g^{jk}}{\partial \hat x^i}\alpha_j \alpha_k.
\end{equation}
Finally we use (\ref{relacaoxichapeuxi}) on (\ref{quase-spray-cotangente}), and the term with vertical derivative is zero due to  (\ref{cartan e y zera}) and (\ref{derivada gij e inversa}), what settles the theorem.
\end{proof}

\begin{comment}
If $F$ is a Finsler structure, then $F (\alpha^\sharp) = F_\ast (\alpha)$ (see Section 14.8 of \cite{BaoChernShen}).
Therefore
\[
F^2(g^{ik} \alpha_k) = g^{ik}\alpha_k g_{ij} g^{js} \alpha_s = F_\ast^2 (\alpha)
\]
\[
f^i(x,u(x,\alpha))=\frac{g^{ik(x,\alpha)}\alpha_k}{\sqrt{\alpha_l g^{lm(x,\alpha)} \alpha_m}}
\]
\begin{align}
\alpha_j \frac{\partial f^j}{\partial x^i}
= & \alpha_j \frac{\partial g^{jk}}{\partial x^i}\frac{\alpha_k}{\sqrt{\alpha_l g^{lm} \alpha_m}} - \frac{1}{2} \frac{\alpha_j g^{jk} \alpha_k}{(\alpha_l g^{lm} \alpha_m)^{3/2}}\alpha_s \frac{\partial g^{st}}{\partial x^i}\alpha_t \nonumber \\
& =\frac{1}{2 \sqrt{\alpha_l g^{lm} \alpha_m}}\frac{\partial g^{jk}}{\partial x^i}\alpha_j \alpha_k
\end{align}
\[
\mathcal E(x,\alpha) = f^i(x,u(x,\alpha)) \frac{\partial}{\partial x^i} - \alpha_j \frac{\partial f^j}{\partial x^i}(x,u(x,\alpha)) \frac{\partial}{\partial \alpha_i},
\]
\end{comment}

Observe that in the Finsler case we have that
\begin{equation}
\label{expressao-xu-Finsler}
X_{u(x,\alpha)} = f^i(x,u(x,\alpha)) \frac{\partial}{\partial x^i} = \frac{g^{ik(x,\alpha)}\alpha_k}{\sqrt{\alpha_l g^{lm(x,\alpha)} \alpha_m}} \frac{\partial}{\partial x^i}
\end{equation}
because the right-hand side of (\ref{expressao-xu-Finsler}) is a unit vector and
\[
\alpha \left( \frac{g^{ik(x,\alpha)}\alpha_k}{\sqrt{\alpha_l g^{lm(x,\alpha)} \alpha_m}} \frac{\partial}{\partial x^i}\right) = F(\alpha^\sharp)
= F_\ast (\alpha) = \max_{y \in S_F[x,0,1]} \alpha (y)
\]
(for the second equality, see \cite{BaoChernShen}).
It follows from (\ref{define-extended}), (\ref{spray no fibrado cotangente}) and the second equality of (\ref{expressao-xu-Finsler}) that $\mathcal G(x,\alpha) = F_\ast (x,\alpha) \mathcal E(x,\alpha)$. 
Therefore $F_\ast . \mathcal E$ is a generalization of $\mathcal G$ for Pontryagin type $C^0$-Finsler manifolds.
Let us compare the integral curves of $F_\ast . \mathcal E$ and $\mathcal E$.

If $(x(t),\alpha(t))$ be a integral curve of $\mathcal E$ such that $x(t)$ is a minimizing path, then 
\[
F_\ast (x(t),\alpha(t)) = \max\limits_{v \in S_F[x(t),0,1]} [\alpha(t)] (v) = \mathcal M(x(t),\alpha(t))
\]
is constant along the curve. 
Moreover if $c_1>0$, then $(x(t),c_1\alpha(t))$ is also an integral curve of $\mathcal E$.
Therefore we can choose $\alpha(t)$ with unit norm along the curve. 
In this case, if $c_2>0$, then $(x(c_2t),c_2\alpha (c_2t))$ is an integral curve of $F_\ast (x,\alpha) . \mathcal E$.

Reciprocally let $\gamma(t) = (x(t),\alpha(t))$ defined on $(-\epsilon, \epsilon)$ be an integral curve of $F_\ast .\mathcal E$ such that $x(t)$ is a minimizing path.
We claim that there exist $c>0$ such that $(x(t/c), y(t/c))$ is an integral curve of $\mathcal E$.
In fact, consider the initial value problem
\begin{equation}
\label{equacao-diferencial-zeta}
\zeta^\prime(t) = \frac{1}{F_\ast\circ \gamma } (\zeta (t)); \hspace{3mm} \zeta(0)=0.
\end{equation}
It has a unique strictly increasing solution $\zeta(t)$.
It is straightforward that 
\[
(\tilde x(t), \tilde \alpha(t)) := (x(\zeta(t)), \alpha (\zeta(t)))
\] 
is an integral curve of $\mathcal E$. 
If follows that 
\[
F_\ast (x(\zeta(t)), \alpha (\zeta(t))) = F_\ast (\tilde x(t),\tilde \alpha(t))
\]
is constant, what implies that $\zeta^\prime (t)$ is a positive constant $c$ due to (\ref{equacao-diferencial-zeta}).
Therefore $(x(t),\alpha(t)) = (\tilde x(t/c),\tilde \alpha (t/c))$.

The relationship between $F_\ast . \mathcal E$ and $\mathcal E$ can be summarized as follows: 
if the minimizing path $x(t)$ is the projection of an integral curve of $\mathcal E$ on $M$ and $c>0$, then $x(ct)$ is the projection of an integral curve of $F_\ast . \mathcal E$. 
Reciprocally if the minimizing path $x(t)$ is the projection of an integral curve of $F_\ast \mathcal E$ on $M$, then it has constant speed and its reparameterization by arclength is the projection of an integral curve of $\mathcal E$. 
Therefore $F_\ast . \mathcal E$ represents all minimizing paths with constant speed and $\mathcal E$ represents all minimizing paths parameterized by arclength.
We have proved the following theorem.

\begin{theorem}
\label{E generaliza G} The correspondences $\mathcal E$ and $F_\ast . \mathcal E$ are generalizations of the geodesic spray $\mathcal G$ of Finsler geometry.
\end{theorem}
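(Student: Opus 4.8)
The plan is to reduce the statement to the identity $\mathcal G = F_\ast\cdot\mathcal E$ in the Finsler case together with the reparametrization dictionary relating integral curves of $\mathcal E$ and of $F_\ast\cdot\mathcal E$, and then invoke Theorem \ref{minimizing}.

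First I would check that, for a Finsler manifold $(M,F)$ identified with $(T^\ast M\backslash 0,F_\ast)$ through the Legendre transform, the control selected by the maximality condition of PMP corresponds to the vector field $X_{u(x,\alpha)} = \bigl(g^{ik(x,\alpha)}\alpha_k/\sqrt{\alpha_l g^{lm(x,\alpha)}\alpha_m}\bigr)\,\partial/\partial x^i$. The two points to verify are that this vector has $F$-norm one and that $\alpha$ evaluated on it equals $F_\ast(\alpha) = \max_{y\in S_F[x,0,1]}\alpha(y)$; both follow from $F(\alpha^\sharp) = F_\ast(\alpha)$ and $g^{ij(x,\alpha)} = g^{ij(x,\alpha^\sharp)}$ (Section 14.8 of \cite{BaoChernShen}). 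Substituting $f^i(x,u(x,\alpha)) = g^{ik}\alpha_k/\sqrt{\alpha_l g^{lm}\alpha_m}$ into \eqref{define-extended} and differentiating, the contribution from the denominator cancels because $\alpha_j g^{jk}\alpha_k = \alpha_l g^{lm}\alpha_m$, leaving $\alpha_j\,\partial f^j/\partial x^i = \tfrac{1}{2\sqrt{\alpha_l g^{lm}\alpha_m}}\,(\partial g^{jk}/\partial x^i)\,\alpha_j\alpha_k$. Multiplying the resulting expression for $\mathcal E$ by $F_\ast(x,\alpha) = \sqrt{\alpha_l g^{lm}\alpha_m}$ reproduces formula \eqref{spray no fibrado cotangente} for the geodesic spray $\mathcal G$ from Theorem \ref{caso Finsler}, so $\mathcal G(x,\alpha) = F_\ast(x,\alpha)\,\mathcal E(x,\alpha)$ on $T^\ast M\backslash 0$.

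Next I would compare integral curves for a general Pontryagin type $C^0$-Finsler manifold. Along an integral curve $(x(t),\alpha(t))$ of $\mathcal E$ whose projection $x(t)$ is minimizing, $F_\ast(x(t),\alpha(t)) = \mathcal M(x(t),\alpha(t))$ is constant by Theorem \ref{minimizing} and the PMP statement; since $\mathcal E$ is invariant under positive rescaling of $\alpha$, one may normalize $\alpha(t)$ to unit $F_\ast$-norm, and then $t\mapsto (x(ct),c\,\alpha(ct))$ is an integral curve of $F_\ast\cdot\mathcal E$ for each $c>0$. Conversely, given an integral curve $\gamma = (x(t),\alpha(t))$ of $F_\ast\cdot\mathcal E$ with $x(t)$ minimizing, the initial value problem $\zeta'(t) = 1/(F_\ast\circ\gamma)(\zeta(t))$, $\zeta(0)=0$, has a unique strictly increasing solution, $(x(\zeta(t)),\alpha(\zeta(t)))$ is an integral curve of $\mathcal E$, and constancy of $F_\ast$ along it forces $\zeta'$ to be a positive constant $c$; hence the reparametrization is affine, $x(t)$ has constant speed, and its arclength reparametrization projects an integral curve of $\mathcal E$.

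Finally I would assemble the pieces: by Theorem \ref{minimizing}, $\mathcal E$ records exactly the minimizing paths of $(M,F)$ parametrized by arclength, while the dictionary above shows $F_\ast\cdot\mathcal E$ records exactly those with constant speed, matching the behaviour of $\mathcal G$ in the Finsler case, where $\mathcal G = F_\ast\cdot\mathcal E$ by the first step (and $\mathcal E$ is $\mathcal G$ rescaled to unit speed). I expect the main obstacle to be the bookkeeping in the first step: keeping the coordinate vector fields $\partial/\partial\hat x^i$ on $TU$ and $\partial/\partial x^i$ on $T^\ast U$ distinct as in \eqref{relacaoxichapeuxi}, and correctly discarding the vertical-derivative contributions via the Cartan-tensor identities \eqref{cartan e y zera} and \eqref{derivada gij e inversa}; once $\mathcal G = F_\ast\cdot\mathcal E$ is in hand the reparametrization step is routine ODE manipulation.
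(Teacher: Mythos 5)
Your proposal is correct and follows essentially the same route as the paper: establish $\mathcal G = F_\ast\cdot\mathcal E$ in the Finsler case via \eqref{expressao-xu-Finsler} and the computation of $\alpha_j\,\partial f^j/\partial x^i$, then match the reparametrization behaviour of integral curves of $\mathcal E$ (arclength) and $F_\ast\cdot\mathcal E$ (constant speed) using exactly the normalization and the IVP $\zeta'=1/(F_\ast\circ\gamma)(\zeta)$ argument the paper employs. The only slight mis-estimate is in your last paragraph: the $\partial/\partial\hat x^i$ versus $\partial/\partial x^i$ bookkeeping and the Cartan-tensor cancellations belong to the proof of Theorem \ref{caso Finsler}, which you are entitled to invoke as given, so the first step is actually routine once that theorem is in hand.
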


It isn't clear whether $F_\ast .\mathcal E$ or $\mathcal E$ will be more useful for the theory of Pontryagin type $C^0$-Finsler manifolds.
The structure $F_\ast . \mathcal E$ is a direct generalization of the geodesic spray of Finsler geometry and it can have more potential to bring geometric objects of Finsler geometry to $C^0$-Finsler geometry. On the other hand, $\mathcal E$ is horizontally bounded and it can be more suitable for direct calculations and the study of the existence of solutions of $\mathcal E$. In this work we use $\mathcal E$ because it is more convenient for our purposes. 

\section{Asymmetric norms and its dual asymmetric norm}
\label{asymmetric norms}

Let $V$ be a finite dimensional real vector space endowed with an asymmetric norm $F$. 
In this section we study some relationships between $(V,F)$ and its dual space $(V^\ast, F_\ast)$, where $F_\ast$ is the dual asymmetric norm of $F$.

\begin{definition}
\label{conjugadoconvexo}
Let $V$ be a vector space and $f:V\rightarrow \mathbb{R}$ be a convex function. The convex conjugate or Fenchel transformation of $f$ is the function $f^*:V^*\rightarrow \mathbb{R}$ defined by
\begin{equation*}
f^*(\alpha)=\sup_{y\in V }\{\alpha(y)-f(y) \}.
\end{equation*}
\end{definition}

\begin{definition}
\label{Normadual}
Let $F$ be an asymmetric norm on $V$. The dual asymmetric norm $F_*:V^*\rightarrow \mathbb{R}$ of $F$ is defined by
\begin{equation*}
F_*(\alpha)=\sup_{F(y)\leq{1}}\alpha(y).
\end{equation*}
\end{definition}

If $\alpha\in V^*$ and $y\in V \backslash \{0\}$, then

\begin{equation*}
\alpha(y)=F(y) \alpha \left(\frac{y}{F(y)} \right)\leq F(y)F_*(\alpha)
\end{equation*}
and the inequality
\begin{equation}
\label{desigualdade h(x) F(x)F_*(h)}
\alpha(y)\leq F(y)F_*(\alpha)
\end{equation}
holds for every $y\in V$ and $\alpha\in V^*$.

Given a point $y\in V$, we will denote the set of functional supports of $F^2$ at $y$ by $\partial F^2(y)$, that is, the set of $\alpha\in V^*$ such that
\begin{equation*}
F^2(z)\geq F^2(y)+\alpha(z-y)\ \ \ \ \ \ \forall z\in V.
\end{equation*}
The set $\partial F^2(y)$ is called subdifferential of $F^2$ at $y$.

The next lemma relates the convex conjugate of an asymmetric norm to the dual asymmetric norm.

\begin{lemma}
\label{conjugadodual}
If $F$ is an asymmetric norm on $V$, then ${F^2}^*=\frac{1}{4}{F_*}^2$.
\end{lemma}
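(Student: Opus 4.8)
The plan is to compute the convex conjugate $(F^2)^*$ directly from the definition and recognize the result as $\tfrac14 F_*^2$. First I would fix $\alpha \in V^*$ and split the supremum $\sup_{y \in V}\{\alpha(y) - F^2(y)\}$ according to rays: since $F$ is positively homogeneous of degree $1$, every nonzero $y$ can be written $y = t z$ with $t = F(y) > 0$ and $F(z) = 1$. Then $\alpha(y) - F^2(y) = t\,\alpha(z) - t^2$, so
\[
(F^2)^*(\alpha) = \sup_{t \ge 0}\ \sup_{F(z)=1}\ \bigl( t\,\alpha(z) - t^2 \bigr).
\]
For the inner supremum over the unit sphere I would observe $\sup_{F(z)=1}\alpha(z) = \sup_{F(z)\le 1}\alpha(z) = F_*(\alpha)$ by Definition \ref{Normadual} (the supremum of the linear functional over the closed unit ball is attained on the boundary, or at $0$ if $F_*(\alpha)\le 0$, but in the combined supremum below the value $t=0$ already covers that case).

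Next I would carry out the one-variable optimization $\sup_{t \ge 0}\bigl( t\,F_*(\alpha) - t^2 \bigr)$. If $F_*(\alpha) \le 0$ the expression is maximized at $t = 0$ giving $0$; if $F_*(\alpha) > 0$ the unconstrained maximum is at $t = F_*(\alpha)/2 \ge 0$, giving $\tfrac14 F_*(\alpha)^2$. In both cases the value equals $\tfrac14 \max\{F_*(\alpha),0\}^2$. Since $F$ is an asymmetric norm, $F(y) \ge 0$ for all $y$, hence $F_*(\alpha) = \sup_{F(y)\le 1}\alpha(y) \ge \alpha(0) = 0$ always, so $\max\{F_*(\alpha),0\}^2 = F_*(\alpha)^2$ and we obtain $(F^2)^*(\alpha) = \tfrac14 F_*(\alpha)^2$ for every $\alpha$, which is the claim.

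The one point requiring a little care — and the closest thing to an obstacle — is the interchange and reduction of the double supremum: one must check that $F$ being only an \emph{asymmetric} norm does not break anything, in particular that the unit sphere $S_F[0,1]$ is nonempty and compact (which follows from $F$ being a continuous asymmetric norm on a finite-dimensional space, so that the inner sup is genuinely attained and equals $F_*(\alpha)$), and that the convex conjugate is well-defined, i.e. $F^2$ is convex. Convexity of $F^2$ is standard: $F$ is convex by the triangle inequality and positive homogeneity, $F \ge 0$, and $t \mapsto t^2$ is convex and nondecreasing on $[0,\infty)$, so the composition $F^2$ is convex. Everything else is the elementary calculus reduction above.
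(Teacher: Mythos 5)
Your proof is correct and rests on the same core computation as the paper's: both reduce the claim to the one-variable quadratic optimization $\sup_{t\ge 0}\bigl(tF_*(\alpha)-t^2\bigr)=\tfrac14 F_*(\alpha)^2$ with $t$ playing the role of $F(y)$. The paper arranges this as two separate inequalities (an upper bound via $\alpha(y)\le F(y)F_*(\alpha)$ followed by an explicit $y$ attaining it), while you streamline it into a single direct computation by parametrizing $V\setminus\{0\}$ by rays through the unit sphere; the mathematical content is the same.
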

\begin{proof}
Let us show first that ${F^2}^*\leq\frac{1}{4}{F_*}^2$. From (\ref{desigualdade h(x) F(x)F_*(h)}) we have $\alpha(y)-F^2(y)\leq F(y)F_*(\alpha)-F^2(y)$ for every $y\in V$. Note that $F(y)F_*(\alpha)-F^2(y)$ is a 
quadratic function on $F(y)$ and it reaches its maximum at $F(y)=\frac{1}{2}F_*(\alpha)$. Therefore
\begin{align*}
\alpha(y) -F^2(y)&\leq F(y)F_*(\alpha)-F^2(y) \leq \frac{1}{4}{F_*}^2(\alpha)
\end{align*}
for every $y\in V$.
	
Now, let's show the opposite inequality. Since $F_*(\alpha)=\sup_{F(y)\leq{1}}\alpha(y)$, there exists $y'\in S_F[0,1]$ such that $F_*(\alpha)=\alpha(y')$.
Set $y=\frac{1}{2}F_\ast (\alpha) y^\prime$.
Then $F(y)=\frac{1}{2}F_\ast (\alpha)$ and 
\begin{align*}
\alpha(y)-F^2(y) & =\frac{1}{2}{F_*}^2(\alpha)-\frac{1}{4}{F_*}^2(\alpha) 
=\frac{1}{4}{F_*}^2(\alpha),
\end{align*}
what implies
\begin{equation*}
{F^2}^*(\alpha)=\sup_{y\in V }\{\alpha(y)-F^2(y) \}\geq \frac{1}{4}{F_*}^2(\alpha).
\end{equation*}
Therefore, ${F^2}^*(\alpha)=\frac{1}{4}{F_*}^2(\alpha)$.
\end{proof}

If $F$ is a strictly convex asymmetric norm, then ${F^2}^*$ is differentiable (see \cite{RockafellarTyrrell}) and ${F_*}^2$ is differentiable due to Lemma \ref{conjugadodual}. 
\textit{We will consider that $F$ is strictly convex until the end of this section}. 

\begin{remark}
\label{diferencial em v estrela}
The differential $dF^2_\ast: V^\ast \times V^\ast \rightarrow \mathbb R$ is naturally identified with $dF^2_\ast: V^\ast \rightarrow V$. 
From now on we consider the latter.
\end{remark}

If $f: V \rightarrow \mathbb R$ is a strictly convex function, then $\partial f^*$ is the inverse of $\partial f$ in the sense of multivalued mapping, that is, $y\in\partial f^*(y^*)$ if and only if $y^*\in\partial f(y)$ (see \cite{RockafellarTyrrell}). Therefore $(\partial F^2)^{-1}=\partial {F^2}^*$ and

\begin{equation}
\label{inversasubdiferencial}
(\partial F^2)^{-1}=\frac{\partial{F_*}^2}{4}=\frac{1}{4}d{F_*}^2
\end{equation}
due to Lemma \ref{conjugadodual}.

Given $\alpha\in V^*$, there is a unique $y\in V$ such that $F^2(z)\geq F^2(y)+\alpha(z - y)$ for every $z\in V$, that is, $\alpha\in\partial F^2(y)$. 
The graph of the function $\varphi$ defined by $\varphi (z)=F^2(y)+\alpha(z-y)$ is tangent to the graph of $F^2$ at $y$, what implies that the affine subspace $\{z\in V; \varphi (z)=F^2(y)\}$ of $V$ is tangent to the level set $S_F[0,F(y)]$ at $y$. 
From this tangency and the strict convexity of $B_F[0,F(y)]$, it follows that $\varphi (z)\leq F^2(y)$ for every $z\in S_F[0,F(y)]$ and the equality holds iff $z=y$. 
Therefore, we can conclude that $y$ is the unique point in $S_F[0,F(y)]$ that maximizes $\alpha$ (and $\varphi$) and $\frac{y}{F(y)}$ is the unique point that  maximizes $\alpha$ in $S_F[0,1]$. 
Therefore $\frac{(\partial F^2)^{-1}(\alpha)}{F((\partial F^2)^{-1}(\alpha))}$ is the unique point in $S_F[0,1]$ that maximizes $\alpha$ and this point can be written as

\begin{equation*}
\frac{\frac{d{F_*}^2(\alpha)}{4}}{F(\frac{d{F_*}^2(\alpha)}{4})}=\frac{d{F_*}^2(\alpha)}{F(d{F_*}^2(\alpha))}
\end{equation*} 
due to (\ref{inversasubdiferencial}).
This proves the following proposition:

\begin{proposition}
\label{unico vetor que maximiza o funcional caso estritamente convexo}
If $F:V\rightarrow\mathbb{R}$ is a strictly convex asymmetric norm and $\alpha\in V^*$, then
\begin{equation}
\frac{d{F_*}^2(\alpha)}{F(d{F_*}^2(\alpha))}
\end{equation}
is the unique point in $S_F[0,1]$ that maximizes $\alpha$.
\end{proposition}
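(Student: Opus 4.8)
The plan is to assemble the statement directly from the facts already gathered in this section, so very little new work is needed. By Lemma~\ref{conjugadodual} we have $\frac{1}{4}{F_*}^2 = {F^2}^*$, and since $F$ is strictly convex, ${F^2}^*$ is differentiable; hence $\frac{1}{4}d{F_*}^2 = d{F^2}^* = \partial{F^2}^*$ as a single-valued map. Using the fact (quoted from \cite{RockafellarTyrrell}) that for a strictly convex $f$ the subdifferential $\partial f^*$ is the multivalued inverse of $\partial f$, we get $(\partial F^2)^{-1} = \partial{F^2}^* = \frac{1}{4}d{F_*}^2$, which is exactly equation~(\ref{inversasubdiferencial}).

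Next I would recall the geometric interpretation of $\partial F^2$. Fix $\alpha \in V^*$ and let $y = (\partial F^2)^{-1}(\alpha)$, so that $\alpha \in \partial F^2(y)$, i.e. $F^2(z) \geq F^2(y) + \alpha(z-y)$ for all $z \in V$. The affine function $\varphi(z) = F^2(y) + \alpha(z-y)$ is a support function to the sublevel set $B_F[0,F(y)]$ at the boundary point $y$, so the affine hyperplane $\{\varphi = F^2(y)\}$ is a supporting hyperplane of $B_F[0,F(y)]$ at $y$. Strict convexity of $B_F[0,F(y)]$ then forces $y$ to be the \emph{unique} point of the sphere $S_F[0,F(y)]$ on which $\alpha$ (equivalently $\varphi$) attains its maximum over that sphere. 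Rescaling by $1/F(y)$ and using positive homogeneity of $\alpha$ and of $F$, the point $y/F(y)$ is the unique maximizer of $\alpha$ on the unit sphere $S_F[0,1]$.

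Finally I would substitute. Since $y = (\partial F^2)^{-1}(\alpha) = \frac14 d{F_*}^2(\alpha)$ by (\ref{inversasubdiferencial}), and since for any $\lambda>0$ the quotient $\tfrac{\lambda w}{F(\lambda w)} = \tfrac{w}{F(w)}$ by positive homogeneity of $F$, we obtain
\[
\frac{y}{F(y)} = \frac{\tfrac14 d{F_*}^2(\alpha)}{F\bigl(\tfrac14 d{F_*}^2(\alpha)\bigr)} = \frac{d{F_*}^2(\alpha)}{F\bigl(d{F_*}^2(\alpha)\bigr)},
\]
which is the claimed unique maximizer of $\alpha$ on $S_F[0,1]$. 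The only point that needs a little care — and which I regard as the main (though modest) obstacle — is justifying that $d{F_*}^2(\alpha) \neq 0$ so that the denominator makes sense: this follows because $\alpha \mapsto (\partial F^2)^{-1}(\alpha)$ maps $V^* \setminus \{0\}$ into $V \setminus \{0\}$ (the maximizer of a nonzero functional on a sphere around the origin cannot be $0$), and for $\alpha = 0$ the statement is vacuous or interpreted trivially. Everything else is a direct reassembly of Lemma~\ref{conjugadodual}, equation~(\ref{inversasubdiferencial}), and the supporting-hyperplane argument already spelled out above, so the proof is short.
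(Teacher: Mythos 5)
Your proof is correct and follows essentially the same path as the paper: the identity $(\partial F^2)^{-1} = \tfrac14 d{F_*}^2$ from Lemma~\ref{conjugadodual}, the supporting-hyperplane / strict-convexity argument showing $y=(\partial F^2)^{-1}(\alpha)$ is the unique maximizer of $\alpha$ on $S_F[0,F(y)]$, and the rescaling by $1/F(y)$ with homogeneity absorbing the factor $\tfrac14$. The only addition is your (sound) remark that $d{F_*}^2(\alpha)\neq 0$ for $\alpha\neq 0$, which the paper leaves implicit.
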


Here is an important point: note that the search for a locally Lipschitz application $\alpha\mapsto \frac{y}{F(y)}$ goes through the study of $d{F_*}^2$.

The next lemma is a consequence of Euler's theorem and it shows how the duality between $F$ and $F_*$ behaves.
\begin{lemma}
\label{igualdadefundamental}
$F(d{F_*}^2(\alpha))=2F_*(\alpha)$.
\end{lemma}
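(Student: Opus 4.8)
The plan is to exploit the positive homogeneity of $F$ and of $F_*$ together with Euler's theorem (Theorem~\ref{Euler theorem}), applied to the degree-$2$ function $F_*^2$. First I would record the homogeneity degrees: $F_*$ is positively homogeneous of degree $1$ in $\alpha$, hence $F_*^2$ is positively homogeneous of degree $2$; consequently $dF_*^2(\alpha)$, being the gradient of a degree-$2$ function, is positively homogeneous of degree $1$ in $\alpha$ (differentiating $F_*^2(\lambda\alpha)=\lambda^2 F_*^2(\alpha)$ in $\alpha$). This already shows both sides of the claimed identity $F(dF_*^2(\alpha)) = 2F_*(\alpha)$ are positively homogeneous of degree $1$ in $\alpha$, so it suffices to prove it for, say, $\alpha$ with $F_*(\alpha)=1$, or more simply to prove it as an identity using Euler's relation directly.

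The key computational step is Euler's theorem applied to $f = F_*^2$ on $V^*$: since $F_*^2$ is positively homogeneous of degree $2$, its radial derivative satisfies $\alpha_i \, \partial F_*^2/\partial\alpha_i = 2 F_*^2(\alpha)$, i.e. in the invariant notation of Remark~\ref{diferencial em v estrela}, $\bigl(dF_*^2(\alpha)\bigr)(\alpha) = 2F_*^2(\alpha)$ — note $dF_*^2(\alpha)\in V$ and it is being paired with $\alpha\in V^*$. On the other hand, Proposition~\ref{unico vetor que maximiza o funcional caso estritamente convexo} tells us that $y_0 := dF_*^2(\alpha)/F(dF_*^2(\alpha))$ is the unique point of $S_F[0,1]$ maximizing $\alpha$, so by the very definition of the dual norm, $\alpha(y_0) = F_*(\alpha)$. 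Combining these two facts,
\begin{equation*}
F_*(\alpha) = \alpha(y_0) = \frac{\alpha\bigl(dF_*^2(\alpha)\bigr)}{F\bigl(dF_*^2(\alpha)\bigr)} = \frac{2F_*^2(\alpha)}{F\bigl(dF_*^2(\alpha)\bigr)},
\end{equation*}
and rearranging gives $F(dF_*^2(\alpha)) = 2F_*(\alpha)$, using that $F_*(\alpha)\neq 0$ for $\alpha\neq 0$ (and the case $\alpha=0$ is trivial since both sides vanish).

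I would also note the small point that $dF_*^2(\alpha)$ is nonzero for $\alpha\neq 0$, which is needed to divide by $F(dF_*^2(\alpha))$: this follows because $\alpha\bigl(dF_*^2(\alpha)\bigr) = 2F_*^2(\alpha) > 0$. I do not anticipate a serious obstacle here; the only thing requiring minor care is keeping the identification of Remark~\ref{diferencial em v estrela} consistent — that $dF_*^2(\alpha)$ is being regarded as an element of $V$, so that both the pairing $\alpha(dF_*^2(\alpha))$ and the evaluation $F(dF_*^2(\alpha))$ make sense — and checking that differentiability of $F_*^2$ (guaranteed by strict convexity of $F$, invoked just before Remark~\ref{diferencial em v estrela}) is what licenses the use of Euler's theorem in the differentiable-function form stated in Theorem~\ref{Euler theorem}.
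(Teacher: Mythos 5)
Your proof is correct and follows exactly the same route as the paper's: Euler's theorem applied to the degree-$2$ homogeneous function $F_*^2$ gives $\alpha(dF_*^2(\alpha))=2F_*^2(\alpha)$, and Proposition~\ref{unico vetor que maximiza o funcional caso estritamente convexo} identifies $dF_*^2(\alpha)/F(dF_*^2(\alpha))$ as the maximizer of $\alpha$ on $S_F[0,1]$, after which the two facts combine directly. The extra remarks you add (homogeneity degrees, nonvanishing of $dF_*^2(\alpha)$, the $\alpha=0$ case) are harmless elaborations of what the paper leaves implicit.
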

\begin{proof}
Since ${F_*}^2$ is homogeneous of degree 2 and its radial derivative at $\alpha$ is given by $\alpha(d{F_*}^2(\alpha))$, we have that $\alpha(d{F_*}^2(\alpha))=2{F_*}^2(\alpha)$ due to the Euler's theorem. Thus 
\begin{align*}
F_*(\alpha) &= \alpha\left(\frac{d{F_*}^2(\alpha)}{F(d{F_*}^2(\alpha))}\right)
=\frac{1}{F(d{F_*}^2(\alpha))}\alpha(d{F_*}^2(\alpha))
=\frac{2{F_*}^2(\alpha)}{F(d{F_*}^2(\alpha))}
\end{align*}
and $F(d{F_*}^2(\alpha))=2F_*(\alpha)$.
\end{proof}

\begin{remark}
\label{dfasterisco-legendre}
The application $dF^2_\ast$ is essentially the inverse of the Legendre transform (see (\ref{inversasubdiferencial})), which is much more well behaved than $\partial F^2$.
Therefore it is ``much easier to access'' $TM$ from a geometric structure on $T^\ast M$ than to proceed in the opposite direction (compare with Remark \ref{conclusoes 1} and Remark \ref{conclusoes 2}).
\end{remark}

In order to have a better control over $d{F_*}^2:V^*\rightarrow V$, we define a class of asymmetric norms that is more restrictive than the strictly convex ones.
\begin{definition}
\label{definicaofortementeconvexa}
Let $\check F$ be an asymmetric norm on $V$. We say that an asymmetric norm $F$ is strongly convex with respect to $\check F$ if 
\begin{equation}
F^2(z)\geq F^2(y)+ \alpha(z-y) +\check F^2(z-y)
\end{equation}
for every $y,z\in V$ and $\alpha\in\partial F^2(y)$.
\end{definition}

\begin{remark}
\label{fortemente convexo nao depende f til}
All asymmetric norms on vector spaces are equivalent, that is, if $\check F$ and $\hat F$ are asymmetric norms on $V$, then there exist $c_1,c_2>0$ such that
\[
c_1 \check F (y) \leq \hat F (y) \leq c_2 \check F(y) 
\] 
for every $y \in V$. 
Therefore if $F$ is strongly convex with respect to $\check F$, then $F$ will be strongly convex with respect to a positive multiple of $\hat F$.
\end{remark}

Whenever it is clear from the context, we will omit the asymmetric norm with respect to which $F$ is strongly convex.
\begin{theorem}
\label{diferencialduallipschitz}
If $F$ is a strongly convex asymmetric norm on $V$ with respect to
$\check F$, then the application $d{F_*}^2:V^*\rightarrow V$ is Lipschitz.
\end{theorem}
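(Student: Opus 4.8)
The plan is to use the identification $(\partial F^2)^{-1}=\tfrac14 d{F_*}^2$ from (\ref{inversasubdiferencial}) to convert the strong convexity inequality of Definition \ref{definicaofortementeconvexa} into a quadratic estimate, from which the Lipschitz bound drops out after cancelling a common factor. Throughout, recall that strong convexity forces strict convexity, so $d{F_*}^2:V^*\rightarrow V$ is well defined (Remark \ref{diferencial em v estrela}).

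First I would fix $\alpha_1,\alpha_2\in V^*$ and set $y_i:=\tfrac14 d{F_*}^2(\alpha_i)$; by (\ref{inversasubdiferencial}), $y_i$ is the unique point of $V$ with $\alpha_i\in\partial F^2(y_i)$. Applying Definition \ref{definicaofortementeconvexa} once with $(y,z,\alpha)=(y_1,y_2,\alpha_1)$ and once with $(y,z,\alpha)=(y_2,y_1,\alpha_2)$, then adding the two inequalities and cancelling $F^2(y_1)+F^2(y_2)$, one obtains
\[
\check F^2(y_2-y_1)+\check F^2(y_1-y_2)\le(\alpha_2-\alpha_1)(y_2-y_1).
\]
This is the standard monotonicity-of-subgradients manipulation, adapted to the asymmetric setting.

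Next I would turn this into an honest norm inequality. Fix any symmetric norm $|\cdot|$ on $V$ with dual norm $|\cdot|_*$ on $V^*$. Since all asymmetric norms on a finite-dimensional space are equivalent (Remark \ref{fortemente convexo nao depende f til}), there is $c>0$ with $\check F(v)\ge c|v|$ for every $v\in V$; evaluating at $v=\pm(y_2-y_1)$ bounds the left-hand side below by $2c^2|y_2-y_1|^2$, while $(\alpha_2-\alpha_1)(y_2-y_1)\le|\alpha_2-\alpha_1|_*\,|y_2-y_1|$ bounds the right-hand side above. Dividing through by $|y_2-y_1|$ (the case $y_1=y_2$ being trivial) gives $|y_2-y_1|\le\tfrac{1}{2c^2}|\alpha_2-\alpha_1|_*$, hence
\[
|d{F_*}^2(\alpha_2)-d{F_*}^2(\alpha_1)|=4\,|y_2-y_1|\le\tfrac{2}{c^2}\,|\alpha_2-\alpha_1|_*,
\]
so $d{F_*}^2$ is Lipschitz; equivalence of norms on finite-dimensional spaces shows this is independent of the auxiliary choice of $|\cdot|$.

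The one point requiring care — and the only real obstacle — is the asymmetry of $\check F$: the left-hand side must be kept as the sum $\check F^2(y_2-y_1)+\check F^2(y_1-y_2)$ rather than collapsed to $2\check F^2(y_2-y_1)$, and it is precisely the bound $\check F(v),\check F(-v)\ge c|v|$ that allows one to pass to a symmetric comparison norm. The remaining steps are routine manipulations of the subgradient inequalities.
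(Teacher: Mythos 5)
Your proposal is correct and follows essentially the same route as the paper: identify $\tfrac14 d{F_*}^2$ with $(\partial F^2)^{-1}$, apply the strong-convexity inequality twice with the roles of $y_1,y_2$ swapped, add, bound the $\check F^2$ terms below by a Euclidean norm and the pairing above by duality, then divide. The only (cosmetic) difference is that the paper substitutes $\check F$ by $c\,\lVert\cdot\rVert$ at the outset, while you keep $\check F^2(y_2-y_1)+\check F^2(y_1-y_2)$ intact and pass to a symmetric comparison norm only at the end — both are justified by Remark \ref{fortemente convexo nao depende f til} and yield the same constant $2/c^2$.
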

\begin{proof}
First of all notice that the fact that $dF_\ast^2$ is Lipschitz doesn't depend on the asymmetric norms we consider on $V^\ast$ and $V$ due to Remark \ref{fortemente convexo nao depende f til}.
Therefore we can fix Euclidean norms $\Vert \cdot \Vert$ and $\Vert \cdot \Vert_\ast$  on $V$ and $V^\ast$ respectively and replace $\check F$ by $c\Vert \cdot \Vert$, with $c>0$.
Consider $\alpha_1,\alpha_2\in V^*$. By (\ref{inversasubdiferencial}) and Definition \ref{definicaofortementeconvexa}, we get
\begin{align}
\label{diferencialduallipschitz 1}
F^2(\frac{1}{4}d{F_*}^2(\alpha_2)) &\geq F^2(\frac{1}{4}d{F_*}^2(\alpha_1))+ \alpha_1(\frac{1}{4}d{F_*}^2(\alpha_2)-\frac{1}{4}d{F_*}^2(\alpha_1))+c^2\left \Vert \frac{1}{4}d{F_*}^2(\alpha_2)-\frac{1}{4}d{F_*}^2(\alpha_1) \right \Vert^2
\end{align}
and 
\begin{align}
\label{diferencialduallipschitz 2}
F^2(\frac{1}{4}d{F_*}^2(\alpha_1)) &\geq F^2(\frac{1}{4}d{F_*}^2(\alpha_2))+\alpha_2(\frac{1}{4}d{F_*}^2(\alpha_1)-\frac{1}{4}d{F_*}^2(\alpha_2)) + c^2\left \Vert \frac{1}{4}d{F_*}^2(\alpha_1)-\frac{1}{4}d{F_*}^2(\alpha_2) \right \Vert^2.
\end{align}
Summing up (\ref{diferencialduallipschitz 1}) and (\ref{diferencialduallipschitz 2}), we have
\begin{equation*}
2c^2\left \Vert \frac{1}{4}d{F_*}^2(\alpha_1)-\frac{1}{4}d{F_*}^2(\alpha_2) \right \Vert^2\leq (\alpha_1-\alpha_2)(\frac{1}{4}d{F_*}^2(\alpha_1)-\frac{1}{4}d{F_*}^2(\alpha_2))
\end{equation*}
and 
\begin{equation*}
2c^2\left \Vert \frac{1}{4}d{F_*}^2(\alpha_1)-\frac{1}{4}d{F_*}^2(\alpha_2) \right \Vert^2\leq \left \Vert \frac{1}{4}d{F_*}^2(\alpha_1)-\frac{1}{4}d{F_*}^2(\alpha_2) \right \Vert \left \Vert \alpha_1-\alpha_2 \right \Vert_* ,
\end{equation*}
what implies
\begin{equation*}
\left \Vert d{F_*}^2(\alpha_1)-d{F_*}^2(\alpha_2) \right \Vert\leq \frac{2}{c^2}\left \Vert \alpha_1-\alpha_2 \right \Vert_*.
\end{equation*}
\end{proof}

\begin{remark} 
\label{differencial}
Theorem \ref{diferencialduallipschitz} and its proof also work if we consider a family of strongly convex asymmetric norms $\{F_\nu\}_{\nu \in \Lambda}$, where every $F_\nu$ is strongly convex with respect to the same $\check F$.
We get
\[
\Vert d(F_{\nu})^2_\ast (\alpha_1) - d(F_{\nu})^2_\ast (\alpha_2) \Vert \leq \frac{2}{c^2}\Vert \alpha_1 - \alpha_2\Vert_\ast
\]
for every $\nu \in \Lambda$ and $\alpha_1, \alpha_2 \in V^\ast$.
\end{remark}

\section{Horizontally $C^1$ family of asymmetric norms}
\label{Horizontally C1 family of asymmetric norms subsection}
In this section, $\Vert \cdot \Vert$ is the canonical norm on $\mathbb R^n $ and $\Vert \cdot \Vert_\ast$ is its dual norm on $\mathbb R^{n\ast}$.

In the previous section we saw a condition for the application $d{F_*}^2:V^*\rightarrow V$ to be Lipschitz. Here we will analyze $d{F^2_*}$ for a family of asymmetric norms defined as follows.
\begin{definition}
\label{familia horizontalmente c1 de normas Rn}
Let $U\subset\mathbb{R}^n$ be an open subset. We say that a continuous function $F:U\times\mathbb{R}^n\rightarrow \mathbb{R}$ is a horizontally $C^1$ family of asymmetric norms if
\begin{enumerate}
\item $F(x,\cdot):\mathbb{R}^n\rightarrow \mathbb{R}$ is an asymmetric norm for every $x\in U$
and
\item $(x,y)\mapsto \frac{\partial F}{\partial x_i}(x,y)$ is continuous for every $i=1,\ldots,n$.
\end{enumerate}
\end{definition}

\begin{remark} 
A horizontally $C^1$ family of asymmetric norms is a particular instance of $C^1$-partially smooth $C^0$-Finsler structure defined in \cite{MatveevTroyanov}. 
The difference is that in the latter definition the horizontal partial derivatives don't need to be continuous.  
\end{remark}

\begin{definition}
Let $F$ be a horizontally $C^1$ family of asymmetric norms. We will denote the subdifferential of $F(x,\cdot)$ at $y$ by $\partial F(x,y)$.
\end{definition}

\begin{definition}
Let $F$ be a horizontally $C^1$ family of asymmetric norms.
The horizontal differential $d_hF: U \times \mathbb R^n \times \mathbb R^n \rightarrow \mathbb R$ of $F$ is defined by
\[
d_hF(x,w,y)=\lim_{t \rightarrow 0}\frac{F(x+tw,y)-F(x,y)}{t}.
\]
\end{definition}

\begin{remark}
\label{explica DhF}
$d_hF$ is naturally identified with the map
$\mu:U \times \mathbb R^n \rightarrow \mathbb R^{n\ast}$ 
defined by $\mu (x,y)=d_hF(x,\cdot,y)$.
From now on we denote $\mu$ by $d_hF$, which is given by
\[
d_hF(x,y) = \frac{\partial F(x,y)}{\partial x^i}dx^i
\]
in the natural coordinate system.
Notice that
\[
\left\Vert d_hF \right\Vert_\ast  = \left\Vert \left( \frac{\partial F}{\partial x^1}, \ldots,\frac{\partial F}{\partial x^n} \right) \right\Vert_\ast,
\]
and $d_hF$ is locally Lipschitz iff $\frac{\partial F}{\partial x^i}$ is locally Lipschitz for every $i=1, \ldots n$.
\end{remark}

We will denote for each $x\in U$, the dual asymmetric norm of $F(x,\cdot)$ by $F_*(x,\cdot)$, what gives us a family of dual asymmetric norms
\begin{equation}
\label{familia de normas duais}
F_*:U\times\mathbb{R}^{n*}\rightarrow\mathbb{R}.
\end{equation}
If $F(x,\cdot)$ is strictly convex, we have its ``vertical'' differentials
\begin{equation}
\label{familia de subdeferenciais}
d_vF_\ast: U \times \mathbb R^{n\ast} \rightarrow \mathbb R^n
\end{equation}
(compare with Remark \ref{diferencial em v estrela}).
As the definition above suggests, we will study the variation of (\ref{familia de subdeferenciais}). The study will be done separately in what is called horizontal and vertical direction, i.e. along $U$ and $\mathbb{R}^{n*}$ respectively. {\em We will use the notation $U\subset\subset \widetilde U$ to state that $U$ is compactly embedded in $\widetilde U$, that is, the closure $\bar U$ of $U$ is compact and $\bar{U}\subset \widetilde U$.}
Initially, we will see some results for (\ref{familia de normas duais}).

\begin{proposition}
\label{condicao de Lipschitz horizontal}
Let $U,\widetilde U\subset\mathbb{R}^n$ be open subsets such that $U$ is convex and $U\subset\subset \widetilde U$. If $F:\widetilde U\times\mathbb{R}^n\rightarrow \mathbb{R}$ is a horizontally $C^1$ family of asymmetric norms, then there exists $C_1>0$ such that
\begin{equation*}
\vert F_*(x_1,\alpha)-F_*(x_2,\alpha)\vert \leq C_1\left \Vert \alpha \right \Vert_* \left \Vert x_1-x_2 \right \Vert
\end{equation*}
for every $\alpha\in\mathbb{R}^{n*}$ and $x_1,x_2\in U$.
\end{proposition}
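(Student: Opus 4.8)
The plan is to reduce the estimate to the horizontal variation of $F$ itself. Fix $\alpha \in \mathbb{R}^{n*}$ and $x_1, x_2 \in U$. Since $F_*(x,\alpha) = \sup_{F(x,y) \le 1} \alpha(y)$, the key is to compare the unit balls $B_F[x_1,0,1]$ and $B_F[x_2,0,1]$. For this I would first show that $F$ and $F_*$ are bounded above and below (up to positive constants) by the Euclidean norm, uniformly for $x$ in the compact set $\bar U$: the continuity of $F$ on $\bar U \times S^{n-1}$ gives constants $0 < m \le M$ with $m\Vert y\Vert \le F(x,y) \le M\Vert y\Vert$ for all $x \in \bar U$, and dually $\frac{1}{M}\Vert\alpha\Vert_* \le F_*(x,\alpha) \le \frac{1}{m}\Vert\alpha\Vert_*$.

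Next, the main step: estimate $|F(x_1,y) - F(x_2,y)|$ for fixed $y$. Because $U$ is convex, the segment from $x_2$ to $x_1$ lies in $U$, and the mean value theorem applied to $t \mapsto F(x_2 + t(x_1-x_2), y)$ — which is $C^1$ in $t$ with derivative $d_hF$ evaluated along the segment contracted against $x_1-x_2$ — gives
\[
|F(x_1,y) - F(x_2,y)| \le \sup_{x \in U} \Vert d_hF(x,y)\Vert_* \, \Vert x_1 - x_2\Vert.
\]
By positive homogeneity of $F$ in $y$ (hence of $d_hF$ in $y$), $\Vert d_hF(x,y)\Vert_* = \Vert y\Vert \,\Vert d_hF(x, y/\Vert y\Vert)\Vert_*$, and the continuity hypothesis on $(x,y) \mapsto \partial F/\partial x^i$ makes $(x,y) \mapsto \Vert d_hF(x,y)\Vert_*$ continuous, hence bounded by some constant $K$ on the compact set $\bar U \times S^{n-1}$. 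Therefore $|F(x_1,y) - F(x_2,y)| \le K\Vert y\Vert\, \Vert x_1-x_2\Vert$ for all $y$.

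Finally I would transfer this to the dual norms. Let $y'$ be a maximizer of $\alpha$ on $B_F[x_1,0,1]$, so $F(x_1,y') \le 1$ and $F_*(x_1,\alpha) = \alpha(y')$; note $\Vert y'\Vert \le 1/m$. Then $F(x_2, y') \le F(x_1,y') + K\Vert y'\Vert\,\Vert x_1-x_2\Vert \le 1 + (K/m)\Vert x_1-x_2\Vert$. Rescaling $y'' = y'/(1 + (K/m)\Vert x_1-x_2\Vert)$ gives a vector in $B_F[x_2,0,1]$, so
\[
F_*(x_2,\alpha) \ge \alpha(y'') = \frac{\alpha(y')}{1 + (K/m)\Vert x_1-x_2\Vert} \ge F_*(x_1,\alpha)\Bigl(1 - \tfrac{K}{m}\Vert x_1-x_2\Vert\Bigr),
\]
whence $F_*(x_1,\alpha) - F_*(x_2,\alpha) \le \frac{K}{m} F_*(x_1,\alpha)\,\Vert x_1-x_2\Vert \le \frac{K}{m^2}\Vert\alpha\Vert_*\,\Vert x_1-x_2\Vert$ using the lower Euclidean bound on $F$. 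Swapping the roles of $x_1$ and $x_2$ yields the reverse inequality, and taking $C_1 = K/m^2$ completes the proof. The one point requiring care is the uniform bound $K$ on $\Vert d_hF\Vert_*$ over the segment: this is where convexity of $U$ and compactness of $\bar U$ (together with positive homogeneity to reduce to the sphere) are all used, so I would make sure the segment actually stays inside $\widetilde U$ — which it does, since it stays inside $U \subset\subset \widetilde U$.
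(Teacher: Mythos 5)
Your proof is correct, but it takes a genuinely different route from the paper's. The paper applies the mean value theorem to the sphere projection $x \mapsto y/F(x,y)$, bounding $\bigl\Vert y/F(x_1,y) - y/F(x_2,y)\bigr\Vert$ by $\Vert y\Vert\,\Vert d_hF\Vert_*\,\Vert x_1-x_2\Vert / F^2$ along the segment, and then passes directly to the dual norms via the identity $F_*(x,\alpha)=\sup_{y\ne 0}\alpha\bigl(y/F(x,y)\bigr)$ and the fact that a difference of suprema of a linear functional is controlled by the sup of the Euclidean distance between the two spheres. You instead apply the mean value theorem to $x \mapsto F(x,y)$ itself, obtaining the cleaner intermediate estimate $|F(x_1,y)-F(x_2,y)|\le K\Vert y\Vert\,\Vert x_1-x_2\Vert$, and then transfer it to $F_*$ by the classical convex-analysis rescaling trick: a maximizer $y'$ for $\alpha$ in $B_F[x_1,0,1]$ has $F(x_2,y')\le 1+(K/m)\Vert x_1-x_2\Vert$, so dividing by that factor produces an admissible competitor for $F_*(x_2,\alpha)$, and $1/(1+s)\ge 1-s$ finishes. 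Both arguments rely on the same ingredients — convexity of $U$, compactness of $\bar U\times S^{n-1}$, positive homogeneity of $F$ (and hence $d_hF$) in $y$, and continuity of $\partial F/\partial x^i$ — and yield essentially the same constant $C_1\approx K/m^2$. Your route has the modest advantage of avoiding the paper's questionable chain of equalities (where $\vert\sup-\sup\vert$ should really be bounded by $\le$, not $=$), replacing it with an unambiguous two-sided estimate; the paper's route is slightly shorter since it avoids the rescaling step, at the cost of working with the more complicated map $y/F(x,y)$. Either way the argument is sound.
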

\begin{proof}
Let $(x,y)\mapsto \frac{y}{F(x,y)}$ be the projection onto the spheres of $F$. Since $F$ is horizontally $C^1$, it follows from the mean value theorem that given $x_1,x_2\in U$ and $y\in\mathbb{R}^n$ there exists $\theta\in(0,1)$ such that
\begin{equation}
\label{teorema do valor medio em xsobreF_p(x)}
\left \Vert \frac{y}{F(x_1, y)}-\frac{y}{F(x_2,y)}\right \Vert \leq  \frac{\Vert y \Vert \left \Vert (d_hF)(x_1+\theta(x_2-x_1),y)\right \Vert}{F^2(x_1+\theta(x_2-x_1),y)}\left \Vert x_1-x_2\right \Vert
\end{equation}
Therefore,
\begin{align*}
\vert F_*(x_1,\alpha)-F_*(x_2,\alpha)\vert
 = & \left\vert \sup_{y\in\mathbb{R}^n\backslash{0}}\alpha\left(\frac{y}{F(x_1,y)}\right)-\sup_{y\in\mathbb{R}^n}\alpha\left(\frac{y}{F(x_2,y)}\right)\right\vert \\
& = \left \Vert \alpha \right \Vert_* \sup_{y\in\mathbb{R}^n\backslash{0}} \left \Vert \frac{y}{F(x_1,y)}-\frac{y}{F(x_2,y)} \right \Vert \\
\leq & \left \Vert \alpha \right \Vert_* \sup_{y\in S^{n-1}} \frac{\left \Vert y \right \Vert\left \Vert (d_hF)(x_1+\theta(x_2-x_1),y)\right \Vert}{F^2(x_1+\theta(x_2-x_1),y)}\left \Vert x_1-x_2\right \Vert \\
& \leq C_1\left \Vert \alpha \right \Vert_*\left \Vert x_1-x_2\right \Vert,
\end{align*}
where $C_1=\sup_{(x,y)\in U\times S^{n-1}} \frac{\left \Vert (d_hF)(x,y)\right \Vert}{F^2(x,y)}$.
\end{proof}

\begin{proposition}
\label{condicao de Lipschitz vertical}
Let $U,\widetilde U\subset\mathbb{R}^n$ be open subsets such that $U\subset\subset \widetilde U$. If $F:\widetilde U\times\mathbb{R}^n\rightarrow \mathbb{R}$ is a horizontally $C^1$ family of asymmetric norms, then there exists $C_2>0$ such that
\end{proposition}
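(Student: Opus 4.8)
The statement to prove is that there exists $C_2>0$, depending only on $U$ and $F$, such that
\[
\vert F_*(x,\alpha_1)-F_*(x,\alpha_2)\vert \le C_2\,\Vert \alpha_1-\alpha_2\Vert_*
\]
for every $x\in U$ and every $\alpha_1,\alpha_2\in\mathbb{R}^{n*}$; that is, the family of dual asymmetric norms $F_*(x,\cdot)$ is Lipschitz in the covector variable, uniformly in $x\in U$. The plan is to reduce this to a single uniform bound on the Euclidean size of the $F(x,\cdot)$-unit balls and then close the argument by compactness.

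First I would use that, for each fixed $x$, $F_*(x,\cdot)$ is itself an asymmetric norm (Definition \ref{Normadual}), so the asymmetric triangle inequality gives
\[
F_*(x,\alpha_1)-F_*(x,\alpha_2)\le F_*(x,\alpha_1-\alpha_2)
\qquad\text{and}\qquad
F_*(x,\alpha_2)-F_*(x,\alpha_1)\le F_*(x,\alpha_2-\alpha_1),
\]
whence $\vert F_*(x,\alpha_1)-F_*(x,\alpha_2)\vert\le\max\{F_*(x,\alpha_1-\alpha_2),\,F_*(x,\alpha_2-\alpha_1)\}$. Thus it suffices to produce $C_2>0$ with $F_*(x,\beta)\le C_2\Vert\beta\Vert_*$ for all $x\in U$ and $\beta\in\mathbb{R}^{n*}$. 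From the definition of the dual asymmetric norm and the bound $\beta(y)\le\Vert\beta\Vert_*\Vert y\Vert$,
\[
F_*(x,\beta)=\sup_{F(x,y)\le 1}\beta(y)\ \le\ \Vert\beta\Vert_*\,\sup_{F(x,y)\le 1}\Vert y\Vert,
\]
so the whole proposition comes down to bounding $\sup\{\Vert y\Vert;F(x,y)\le 1\}$ uniformly over $x\in U$.

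For that last bound I would invoke compactness. Put $m_0=\min\{F(x,y);(x,y)\in\bar U\times S^{n-1}\}$; the minimum is attained because $F$ is continuous and $\bar U\times S^{n-1}$ is compact ($\bar U$ is compact since $U\subset\subset\widetilde U$), and it is strictly positive because each $F(x,\cdot)$ is an asymmetric norm, hence positive off the origin. Positive homogeneity then gives $F(x,y)=\Vert y\Vert\,F(x,y/\Vert y\Vert)\ge m_0\Vert y\Vert$ for all $x\in\bar U$ and $y\ne 0$, so $F(x,y)\le 1$ forces $\Vert y\Vert\le 1/m_0$. Substituting into the previous display yields $F_*(x,\beta)\le\Vert\beta\Vert_*/m_0$, and $C_2=1/m_0$ works.

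I do not anticipate a genuine obstacle: the argument uses only continuity of $F$ (not the horizontally $C^1$ hypothesis) together with compactness of $\bar U\times S^{n-1}$, and the one point that must be handled with care is the strict positivity and attainment of $m_0$, which is exactly where compactness and the positive-definiteness of the asymmetric norms enter. Should the intended conclusion instead be a uniform Lipschitz bound for the vertical differential $d_vF_*$, that would require a strong-convexity hypothesis and one would argue as in Theorem \ref{diferencialduallipschitz} via a two-sided subgradient comparison; but for $F_*$ itself the compactness argument above already suffices.
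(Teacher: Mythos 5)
Your proof is correct and rests on the same core idea as the paper's: bounding $\sup_{(x,y)\in \bar U\times S^{n-1}} 1/F(x,y)$ by compactness of $\bar U\times S^{n-1}$ and positivity of the asymmetric norm, which gives the same constant $C_2$. The only cosmetic difference is in the preliminary reduction — you use the triangle inequality of the dual asymmetric norm to reduce to $F_*(x,\beta)\le C_2\Vert\beta\Vert_*$, while the paper writes $F_*$ as a supremum and uses $\vert\sup f-\sup g\vert\le\sup\vert f-g\vert$ directly; both are elementary and lead to the identical estimate.
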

\begin{equation*}
\vert F_*(x,\alpha_1)-F_*(x,\alpha_2)\vert \leq C_2\left \Vert \alpha_1-\alpha_2 \right \Vert_*
\end{equation*}
for every $\alpha_1,\alpha_2\in\mathbb{R}^{n*}$ and $x\in U$.
\begin{proof}
Indeed, given $\alpha_1,\alpha_2\in\mathbb{R}^{n*}$ and $x\in U$, we have
\begin{align*}
\vert F_*(x,\alpha_1)-F_*(x,\alpha_2)\vert
&= \left\vert \sup_{y\in\mathbb{R}^n\backslash{0}}\alpha_1\left(\frac{y}{F(x,y)}\right)-\sup_{y\in\mathbb{R}^n\backslash{0}}\alpha_2\left(\frac{y}{F(x,y)}\right)\right\vert \\
& \leq \sup_{y\in\mathbb{R}^n\backslash{0}} \left \Vert \frac{y}{F(x,y)} \right \Vert \left \Vert \alpha_1-\alpha_2 \right \Vert_* 
 = \sup_{y\in S^{n-1}} \left \Vert \frac{y}{F(x,y)} \right \Vert \left \Vert \alpha_1-\alpha_2 \right \Vert_* \\
& \leq C_2 \left \Vert \alpha_1-\alpha_2 \right \Vert_*,
\end{align*}
where $C_2=\sup_{(x,y)\in U\times S^{n-1}}  \frac{1}{F(x,y)}$.
\end{proof}

\begin{theorem}
\label{fam dual local lipschtz}
If $F:\widetilde U\times\mathbb{R}^n\rightarrow \mathbb{R}$ is a horizontally $C^1$ family of asymmetric norms. Then the respective family of dual asymmetric norms $F_*:\widetilde U\times\mathbb{R}^{n*}\rightarrow \mathbb{R}$ is locally Lipschitz.
\end{theorem}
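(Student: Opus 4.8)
The plan is to obtain the statement by gluing together the horizontal estimate of Proposition \ref{condicao de Lipschitz horizontal} and the vertical estimate of Proposition \ref{condicao de Lipschitz vertical} via a triangle inequality in the two slots, the only extra ingredient being that $\Vert\alpha\Vert_\ast$ is bounded on a suitable neighborhood in the covector variable. Recall that ``locally Lipschitz'' here means Lipschitz in a neighborhood of each point of $\widetilde U\times\mathbb R^{n\ast}$, so it suffices to work near a fixed $(x_0,\alpha_0)$.

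First I would choose an open ball $U$ with $x_0\in U$ such that $U$ is convex and $U\subset\subset\widetilde U$ (this convexity is exactly what Proposition \ref{condicao de Lipschitz horizontal} requires), and a bounded open neighborhood $W$ of $\alpha_0$ in $\mathbb R^{n\ast}$, so that $R:=\sup_{\alpha\in W}\Vert\alpha\Vert_\ast<\infty$. Proposition \ref{condicao de Lipschitz horizontal} applied on $U$ gives a constant $C_1>0$, and Proposition \ref{condicao de Lipschitz vertical} applied on $U$ gives a constant $C_2>0$. Then for any $(x_1,\alpha_1),(x_2,\alpha_2)\in U\times W$,
\begin{align*}
\vert F_\ast(x_1,\alpha_1)-F_\ast(x_2,\alpha_2)\vert
&\leq \vert F_\ast(x_1,\alpha_1)-F_\ast(x_2,\alpha_1)\vert+\vert F_\ast(x_2,\alpha_1)-F_\ast(x_2,\alpha_2)\vert \\
&\leq C_1\Vert\alpha_1\Vert_\ast\,\Vert x_1-x_2\Vert+C_2\Vert\alpha_1-\alpha_2\Vert_\ast \\
&\leq C_1 R\,\Vert x_1-x_2\Vert+C_2\Vert\alpha_1-\alpha_2\Vert_\ast .
\end{align*}
Setting $L:=\max\{C_1R,\,C_2\}$ and using that $\Vert x_1-x_2\Vert+\Vert\alpha_1-\alpha_2\Vert_\ast$ controls (and is controlled by) the chosen product metric on $\widetilde U\times\mathbb R^{n\ast}$, this shows $F_\ast$ is $2L$-Lipschitz on $U\times W$, hence locally Lipschitz.

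The argument is essentially bookkeeping, so there is no genuine analytic obstacle; the only points that need attention are (i) picking the $x$-neighborhood convex and compactly contained in $\widetilde U$ so that Proposition \ref{condicao de Lipschitz horizontal} is available, and (ii) confining the covector to a bounded set so that the factor $\Vert\alpha_1\Vert_\ast$ in the horizontal estimate is controlled. If one prefers the ``Lipschitz on every compact subset'' formulation of local Lipschitzness, the same proof works after covering the $\widetilde U$-projection of the compact set by finitely many convex open sets compactly contained in $\widetilde U$ and invoking compactness (a Lebesgue-number argument) to get a single constant.
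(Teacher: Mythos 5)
Your proof is correct and is essentially the paper's argument: the paper likewise deduces the theorem directly from Propositions \ref{condicao de Lipschitz horizontal} and \ref{condicao de Lipschitz vertical} together with the existence of a convex neighborhood of each point compactly embedded in $\widetilde U$, and the triangle-inequality splitting with the bounded-covector observation is exactly how those two estimates combine. You have simply written out the bookkeeping the paper leaves implicit.
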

\begin{proof}
It follows from Propositions \ref{condicao de Lipschitz horizontal} and \ref{condicao de Lipschitz vertical} and the fact that every $x\in \widetilde U$ admit a convex neighborhood which is compactly embedded in $\widetilde U$.
\end{proof}

Analogously to Definition \ref{definicaofortementeconvexa}, the next two definitions aim to define the concept of strongly convexity for a horizontally $C^1$ family of asymmetric norms.

\begin{definition}
\label{familia suave de normas euclidianas}
Let $U\subset \mathbb{R}^n$ be an open subset. We say that $\check F :U\times\mathbb{R}^n\rightarrow \mathbb{R}$ is a smooth family of asymmetric norms if $\check F$ is smooth on $U\times \mathbb{R}^n\backslash{0}$ and $\check F(x,\cdot):\mathbb{R}^n\rightarrow \mathbb{R}$ is an asymmetric norm for each $x\in U$.
\end{definition}

\begin{definition}
\label{definicao fortemente convexa com respeito a familia}
Let $\check F :U\times\mathbb{R}^n\rightarrow \mathbb{R}$ be a smooth family of asymmetric norms and $F:U\times\mathbb{R}^n\rightarrow \mathbb{R}$ be a horizontally $C^1$ family of asymmetric norms. We say that $F$ is strongly convex with respect to $\check F$ if
\begin{equation*}
F^2(x,z)\geq F^2(x,y)+\alpha(z-y) +\check F^2(x,z-y)
\end{equation*}
for every $y,z\in \mathbb{R}^n$, $x\in U$ and $\alpha\in\partial F^2(x,y)$.
\end{definition}

We have the following result about the variation of $d_vF_\ast$ along the vertical direction.

\begin{proposition}
\label{condicao de lipschitz vertical para dF*}
Let $U,\widetilde U\subset\mathbb{R}^n$ be open subsets with $U\subset\subset \widetilde U$. 
Let $\check F:\widetilde U\times\mathbb{R}^n\rightarrow \mathbb{R}$ be a smooth family of asymmetric norms and $F:\widetilde U \times\mathbb{R}^n\rightarrow \mathbb{R}$ be a horizontally $C^1$ family of strongly convex asymmetric norms with respect to $\check F$. Then there exists $C_3>0$ such that
\begin{equation}
\label{lipschitz vertical para nabla}
\left \Vert (d_v{F_*}^2)(x,\alpha_1)-(d_v{F_*}^2)(x,\alpha_2) \right \Vert \leq C_3 \left \Vert \alpha_1-\alpha_2 \right \Vert_*
\end{equation}
for every $x\in U$ and $\alpha_1,\alpha_2\in\mathbb{R}^{n*}$.
\end{proposition}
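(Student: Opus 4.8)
The plan is to reduce the statement to a fiberwise application of Theorem \ref{diferencialduallipschitz}, the only genuinely new point being that the Lipschitz constant produced there can be chosen independently of $x\in U$. Recall that in that theorem the constant came out as $2/c^2$, where $c>0$ was any number satisfying $\check F(w)\ge c\Vert w\Vert$; so everything hinges on finding a single such $c$ that serves the whole family $\{\check F(x,\cdot)\}_{x\in U}$ simultaneously. As a preliminary observation, strong convexity of $F(x,\cdot)$ with respect to $\check F(x,\cdot)$ forces $F^2(x,\cdot)$ to lie strictly above each of its supporting hyperplanes, hence $F(x,\cdot)$ is strictly convex, ${F_*}^2(x,\cdot)$ is differentiable, and the fiberwise analogue of (\ref{inversasubdiferencial}) holds: $\bigl(\partial F^2(x,\cdot)\bigr)^{-1}=\tfrac14\,d_v{F_*}^2(x,\cdot)$. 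Thus, for a fixed $\alpha$, the vector $y:=\tfrac14\,d_v{F_*}^2(x,\alpha)$ is exactly the point with $\alpha\in\partial F^2(x,y)$.

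Next I would extract the uniform ellipticity constant. Since $U\subset\subset\widetilde U$, the closure $\overline U$ is compact, so $\check F$ --- being smooth, in particular continuous and strictly positive away from the zero section --- attains a minimum $c>0$ on the compact set $\overline U\times S^{n-1}$. By positive homogeneity of $\check F(x,\cdot)$ this yields $\check F^2(x,w)\ge c^2\Vert w\Vert^2$ for every $x\in U$ and $w\in\mathbb R^n$, and feeding this into Definition \ref{definicao fortemente convexa com respeito a familia} gives the uniform inequality
\[
F^2(x,z)\ \ge\ F^2(x,y)+\alpha(z-y)+c^2\Vert z-y\Vert^2
\]
valid for all $x\in U$, all $y,z\in\mathbb R^n$, and all $\alpha\in\partial F^2(x,y)$.

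The last step is to replay, with $x$ frozen, the computation in the proof of Theorem \ref{diferencialduallipschitz}. Given $\alpha_1,\alpha_2\in\mathbb R^{n*}$, set $y_k:=\tfrac14\,d_v{F_*}^2(x,\alpha_k)$, so that $\alpha_k\in\partial F^2(x,y_k)$. Applying the displayed inequality once with $(y,z,\alpha)=(y_1,y_2,\alpha_1)$ and once with $(y_2,y_1,\alpha_2)$, adding, and cancelling the $F^2$-terms gives $2c^2\Vert y_1-y_2\Vert^2\le(\alpha_1-\alpha_2)(y_1-y_2)$; since $\Vert\cdot\Vert_*$ is the norm dual to the Euclidean norm $\Vert\cdot\Vert$, the right-hand side is bounded by $\Vert\alpha_1-\alpha_2\Vert_*\,\Vert y_1-y_2\Vert$ (this is (\ref{desigualdade h(x) F(x)F_*(h)}) with $F=\Vert\cdot\Vert$). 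Hence $\Vert y_1-y_2\Vert\le\tfrac1{2c^2}\Vert\alpha_1-\alpha_2\Vert_*$, which is precisely (\ref{lipschitz vertical para nabla}) with $C_3=2/c^2$, a constant independent of $x\in U$.

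The only real obstacle is producing the uniform lower bound $\check F^2(x,\cdot)\ge c^2\Vert\cdot\Vert^2$ over $x\in U$; this is exactly where the compactness of $\overline U$ (i.e.\ the hypothesis $U\subset\subset\widetilde U$) enters, and it is what prevents the ellipticity constant of the family from degenerating. Everything after that is a pointwise repetition of an argument already carried out in Theorem \ref{diferencialduallipschitz}, so no new analytic difficulty arises; one should merely keep in mind that, unlike in Propositions \ref{condicao de Lipschitz horizontal} and \ref{condicao de Lipschitz vertical}, no convexity of $U$ is needed here, since the estimate is purely vertical.
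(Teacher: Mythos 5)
Your proof is correct and is essentially the proof the paper sketches: extract a uniform lower bound $\check F(x,w)\ge c\,\Vert w\Vert$ for $x\in\overline U$ by compactness of $\overline U\times S^{n-1}$ and continuity of $\check F$, then rerun the two-sided estimate of Theorem~\ref{diferencialduallipschitz} with $x$ frozen, arriving at $C_3=2/c^2$. In fact you implicitly correct a small slip in the paper's write-up, which states the final bound as $2/(c_2)^2$ with $c_2$ the \emph{upper} equivalence constant in $c_1\Vert y\Vert\le\check F(x,y)\le c_2\Vert y\Vert$; the argument requires the lower equivalence constant, exactly as you use it.
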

\begin{proof}
Due to $U\subset \subset \widetilde U$, there exist $c_1, c_2 >0$ such that $c_1 \Vert y \Vert \leq \check F(x,y) \leq c_2 \Vert y \Vert$ for every $(x,y) \in U \times \mathbb R^n$. Proceeding analogously to Theorem \ref{diferencialduallipschitz}, we have
\begin{equation}
\label{familiadiferencialduallipschitz}
\left \Vert (d_v{F_*}^2)(x,\alpha_1)-(d_v{F_*}^2)(x,\alpha_2) \right \Vert \leq \frac{2}{(c_2)^2}\left \Vert \alpha_1-\alpha_2 \right \Vert_*,
\end{equation}
for every $x\in U$ and $\alpha_1,\alpha_2\in\mathbb{R}^{n*}$, what settles the proposition.
\end{proof}

The next result shows the continuity of $d_vF_\ast$ and it will be used in the proof of Proposition \ref{condicao Lipschitz horizontal para dF*}.

\begin{proposition}
\label{continuidade de dF*}
Let $\check F :U\times\mathbb{R}^n\rightarrow \mathbb{R}$ be a smooth family of asymmetric norms and $F:U\times\mathbb{R}^n\rightarrow \mathbb{R}$ be a horizontally $C^1$ family of strongly convex asymmetric norms with respect to $\check F$. Then, the application $d_v{F_*}^2$ is continuous.
\end{proposition}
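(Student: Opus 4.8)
The plan is to combine the explicit description of $d_v{F_*}^2$ obtained in Section~\ref{asymmetric norms} with a compactness argument. Since $F$ is strongly convex with respect to $\check F$, each norm $F(x,\cdot)$ is strictly convex (for $z\neq y$ the term $\check F^2(x,z-y)$ is strictly positive in Definition~\ref{definicao fortemente convexa com respeito a familia}), so Proposition~\ref{unico vetor que maximiza o funcional caso estritamente convexo} and Lemma~\ref{igualdadefundamental}, applied to $F(x,\cdot)$, yield
\begin{equation*}
d_v{F_*}^2(x,\alpha)=2F_*(x,\alpha)\,v(x,\alpha),
\end{equation*}
where for $\alpha\neq 0$ the vector $v(x,\alpha)$ is the \emph{unique} point of $S_F[x,0,1]$ that maximizes $\alpha$, and $d_v{F_*}^2(x,0)=0$ since ${F_*}^2(x,\cdot)$ is nonnegative with a minimum at $0$. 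By Theorem~\ref{fam dual local lipschtz} the dual family $F_*$ is continuous, so it will suffice to prove that $v$ is continuous on $U\times(\mathbb R^{n*}\setminus 0)$ and to treat the locus $\alpha=0$ separately.

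To prove continuity of $v$ at a point $(x_0,\alpha_0)$ with $\alpha_0\neq 0$, I would first fix an open set $V$ containing $x_0$ with $\overline V$ a compact subset of $U$; by continuity and positivity of $F$ there is $c_0>0$ with $F(x,y)\ge c_0\Vert y\Vert$ for all $(x,y)\in\overline V\times\mathbb R^n$, hence $\Vert v(x,\alpha)\Vert\le 1/c_0$ whenever $x\in\overline V$. Now take $(x_k,\alpha_k)\to(x_0,\alpha_0)$; eventually $\alpha_k\neq 0$ and the sequence $v_k:=v(x_k,\alpha_k)$ is bounded. Let $v$ be the limit of any convergent subsequence $(v_{k_j})$. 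Continuity of $F$ gives $F(x_0,v)=1$, so $v\in S_F[x_0,0,1]$. Given any competitor $w\in S_F[x_0,0,1]$, the rescaled vectors $w_j:=w/F(x_{k_j},w)$ lie in $S_F[x_{k_j},0,1]$ and converge to $w$, so the optimality of $v_{k_j}$ gives $\alpha_{k_j}(w_j)\le\alpha_{k_j}(v_{k_j})$, and passing to the limit $\alpha_0(w)\le\alpha_0(v)$. Thus $v$ maximizes $\alpha_0$ on $S_F[x_0,0,1]$, and by the uniqueness in Proposition~\ref{unico vetor que maximiza o funcional caso estritamente convexo} we get $v=v(x_0,\alpha_0)$. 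Since every subsequence of the bounded sequence $(v_k)$ has a further subsequence converging to $v(x_0,\alpha_0)$, we conclude $v_k\to v(x_0,\alpha_0)$, and then the displayed formula together with continuity of $F_*$ gives $d_v{F_*}^2(x_k,\alpha_k)\to d_v{F_*}^2(x_0,\alpha_0)$. For the remaining locus, the bound $\Vert d_v{F_*}^2(x,\alpha)\Vert\le(2/c_0)F_*(x,\alpha)$ together with the continuity of $F_*$ (and $F_*(x_0,0)=0$) shows $d_v{F_*}^2(x,\alpha)\to 0=d_v{F_*}^2(x_0,0)$ as $(x,\alpha)\to(x_0,0)$.

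The main obstacle is the middle step: showing that subsequential limits of the maximizers $v_k$ are again maximizers. This is an upper-semicontinuity-of-$\arg\max$ statement made slightly delicate by the fact that the competitor set $S_F[x,0,1]$ moves with $x$; the rescaling $w\mapsto w/F(x_k,w)$ of a fixed competitor, combined with the joint continuity of $F$, is what makes the limiting inequality $\alpha_0(w)\le\alpha_0(v)$ go through. (Alternatively, once continuity in $x$ is established for each fixed $\alpha$, Proposition~\ref{condicao de lipschitz vertical para dF*} reduces joint continuity to continuity in $x$; but the argument above handles both variables simultaneously.)
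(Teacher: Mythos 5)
Your proof is correct, and it takes a genuinely different route from the paper's. The paper applies the strong-convexity inequality of Definition~\ref{definicao fortemente convexa com respeito a familia} directly, with $\alpha_n\in\partial F^2\bigl(x_n,\tfrac14 d_v{F_*}^2(x_n,\alpha_n)\bigr)$ (via (\ref{inversasubdiferencial})) and $z=\tfrac14 d_v{F_*}^2(x,\alpha)$; it then rewrites the four terms on the right using Lemma~\ref{igualdadefundamental} and Euler's theorem, obtaining a telescoping cancellation in the limit so that $\check F^2\bigl(x_n, d_v{F_*}^2(x,\alpha)-d_v{F_*}^2(x_n,\alpha_n)\bigr)\to 0$, and concludes because $\check F(x_n,\cdot)$ is uniformly bounded below by a multiple of $\Vert\cdot\Vert$ near $x$. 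You instead factor $d_v{F_*}^2(x,\alpha)=2F_*(x,\alpha)\,v(x,\alpha)$ via Proposition~\ref{unico vetor que maximiza o funcional caso estritamente convexo} and Lemma~\ref{igualdadefundamental}, reduce the problem to continuity of the argmax $v$, and prove that by a boundedness-plus-uniqueness argument, handling the moving constraint set $S_F[x,0,1]$ with the competitor rescaling $w\mapsto w/F(x_k,w)$. Both are sound. What your route buys: it invokes strong convexity only to get strict convexity (and hence uniqueness of the maximizer and differentiability of ${F_*}^2$), so it actually proves the stronger statement that $d_v{F_*}^2$ is continuous for any horizontally $C^1$ family of \emph{strictly} convex asymmetric norms; and it isolates the genuinely geometric content (stability of the maximizer under perturbation of both the functional and the constraint set). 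What the paper's route buys: it stays within the same computational framework as Lemma~\ref{diferenca simetrica} and Propositions~\ref{condicao de lipschitz vertical para dF*}, \ref{condicao Lipschitz horizontal para dF*}, and its quantitative flavor foreshadows the Lipschitz estimates that follow, whereas your compactness argument gives continuity without explicit rates.
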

\begin{proof}
Let us to show that given a sequence $(x_n,\alpha_n)\mapsto (x,\alpha)$ in $U\times\mathbb{R}^{n*}$, then $d_v{F_*}^2(x_n,\alpha_n)\mapsto d_v{F_*}^2(x,\alpha)$. 
We have that
\begin{align*}
 F^2\left(x_n,\frac{1}{4}(d_v{F_*}^2)(x,\alpha)\right)
 \geq & F^2\left(x_n,\frac{1}{4}(d_v{F_*}^2)(x_n,\alpha_n)\right)+ \alpha_n\left(\frac{1}{4}(d_v{F_*}^2)(x,\alpha)-\frac{1}{4}(d_v{F_*}^2)(x_n,\alpha_n)\right) \\
 + & \tilde{F}^2\left(x_n, \frac{1}{4}(d_v{F_*}^2)(x,\alpha)-\frac{1}{4}(d_v{F_*}^2)(x_n,\alpha_n)\right)
\end{align*}
due to (\ref{inversasubdiferencial}), which is equivalent to
\begin{align*}
F^2\left(x_n,(d_v{F_*}^2)(x,\alpha)\right)
 \geq & F^2\left(x_n,(d_v{F_*}^2)(x_n,\alpha_n)\right)+ \alpha_n\left(4(d_v{F_*}^2)(x,\alpha)-4(d_v{F_*}^2)(x_n,\alpha_n)\right) \\
 + & \check F^2\left(x_n,(d_v{F_*}^2)(x,\alpha)-(d_v{F_*}^2)(x_n,\alpha_n) \right).
\end{align*}
Thus
\begin{align*}
& \check F^2\left(x_n,(d_v{F_*}^2)(x,\alpha)-(d_v{F_*}^2)(x_n,\alpha_n) \right) \\
\leq & F^2\left(x_n,(d_v{F_*}^2)(x,\alpha)\right)-F^2\left(x_n,(d_v{F_*}^2)(x_n,\alpha_n)\right) - 4\alpha_n((d_v{F_*}^2)(x,\alpha)) +4\alpha_n((d_v{F_*}^2)(x_n,\alpha_n))\nonumber \\
= & F^2\left(x_n,(d_v{F_*}^2)(x,\alpha)\right)-4{F_*}^2(x_n,\alpha_n)-4\alpha_n((d_v{F_*}^2)(x,\alpha)) + 8{F_*}^2(x_n,\alpha_n).
\end{align*}
In the last equality, the second and fourth terms are due to Lemma \ref{igualdadefundamental} and Euler's theorem respectively. Applying the limit, we have
\begin{align*}
& \lim_{n\to\infty}\check F^2\left(x_n,(d_v{F_*}^2)(x,\alpha)-(d_v{F_*}^2)(x_n,\alpha_n)\right) \\
\leq & \lim_{n\to\infty}F^2\left(x_n,(d_v{F_*}^2)(x,\alpha)\right)-\lim_{n\to\infty}4{F_*}^2(x_n,\alpha_n) - \lim_{n\to\infty}4\alpha_n((d_v{F_*}^2)(x,\alpha))+\lim_{n\to\infty}8{F_*}^2(x_n,\alpha_n)\\
= & F^2\left(x,(d_v{F_*}^2)(x,\alpha)\right)-4{F_*}^2(x,\alpha) - 4\alpha((d_v{F_*}^2)(x,\alpha))+8{F_*}^2(x,\alpha)= 0 \nonumber,
\end{align*}
again due to Lemma \ref{igualdadefundamental} and Euler's theorem.
\end{proof}

The analysis of the horizontal variation of $d_v F_\ast$ will be done in two stages. Lemma \ref{diferenca simetrica} provides a technical step while the Proposition \ref{condicao Lipschitz horizontal para dF*} provides sufficient conditions in order to prove Theorem \ref{dF* localmente Lipschitz}.
\begin{lemma}
\label{diferenca simetrica}
Let $U,\widetilde U$ be open subsets of $\mathbb{R}^n$ such that $U\subset\subset \widetilde U$, $\check F:\widetilde U\times\mathbb{R}^n\rightarrow \mathbb{R}$ be a smooth family of asymmetric norms and $F:\widetilde U\times\mathbb{R}^n\rightarrow \mathbb{R}$ be a horizontally $C^1$ family of strongly convex asymmetric norms with respect to $\check F$. 
Then there exists a $C_4 > 0$ such that
\begin{align*}
& C_4 \left \Vert (d_v{F_*}^2)(x_1,\alpha)-(d_v{F_*}^2)(x_2,\alpha) \right \Vert^2 \\
\leq & F^2(x_1,(d_v{F_*}^2)(x_2,\alpha))+F^2(x_2,(d_v{F_*}^2)(x_1,\alpha)) - F^2(x_1,(d_v{F_*}^2)(x_1,\alpha))-F^2(x_2,(d_v{F_*}^2)(x_2,\alpha))
\end{align*}
for every $x_1,x_2\in U$ and $\alpha\in \mathbb{R}^{n*}$.
\end{lemma}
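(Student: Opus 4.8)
The plan is to reuse the symmetrization trick from the proof of Theorem \ref{diferencialduallipschitz}, now keeping $\alpha$ fixed and letting the base point run between $x_1$ and $x_2$ instead of fixing the base point and varying $\alpha$.

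First I would record the fiberwise meaning of $d_v{F_*}^2$. For each fixed $x \in U$ the asymmetric norm $F(x,\cdot)$ is strictly convex (strong convexity forces strict convexity), so the identity (\ref{inversasubdiferencial}) applies to $F(x,\cdot)$ and tells us that, writing $w_i := (d_v{F_*}^2)(x_i,\alpha)$, we have $\alpha \in \partial F^2(x_i,\tfrac14 w_i)$ for $i=1,2$. Next I would apply the strong-convexity inequality of Definition \ref{definicao fortemente convexa com respeito a familia} twice in a ``crossed'' fashion: once at the point $\tfrac14 w_1$ in the norm $F(x_1,\cdot)$ with $z=\tfrac14 w_2$, and once at the point $\tfrac14 w_2$ in the norm $F(x_2,\cdot)$ with $z=\tfrac14 w_1$. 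Adding the two inequalities makes the affine terms $\pm\,\alpha(\tfrac14 w_2-\tfrac14 w_1)$ cancel, and after multiplying through by $16$ and using that $F^2(x,\cdot)$ and $\check F^2(x,\cdot)$ are positively homogeneous of degree $2$, one is left with
\begin{align*}
\check F^2(x_1,w_2-w_1)+\check F^2(x_2,w_1-w_2) & \le F^2(x_1,w_2)+F^2(x_2,w_1) \\
& \quad -F^2(x_1,w_1)-F^2(x_2,w_2),
\end{align*}
whose right-hand side is precisely the right-hand side of the Lemma.

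It then remains to bound the left-hand side from below by a positive multiple of $\|w_1-w_2\|^2$. Since $U\subset\subset\widetilde U$ and $\check F$ is a continuous family of asymmetric norms that is everywhere positive on the compact set $\overline U\times S^{n-1}$, there is a constant $c_1>0$ with $\check F(x,v)\ge c_1\|v\|$ for all $x\in\overline U$ and $v\in\mathbb R^n$ — the same estimate already used in the proof of Proposition \ref{condicao de lipschitz vertical para dF*}. Applying this to $v=w_2-w_1$ and to $v=w_1-w_2$ shows that the left-hand side is at least $2c_1^2\|w_1-w_2\|^2$, so $C_4:=2c_1^2$ does the job.

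I do not anticipate a genuine obstacle: the argument is essentially a transcription of the proof of Theorem \ref{diferencialduallipschitz}. The only two points requiring care are (i) carrying the normalization factor $\tfrac14$ from (\ref{inversasubdiferencial}) through correctly, so that the degree-$2$ homogeneity rescaling reproduces the un-normalized quantities $F^2(x_i,(d_v{F_*}^2)(x_j,\alpha))$ that appear in the statement, and (ii) observing that in the crossed strong-convexity inequalities the correction term involves $\check F^2(x_1,\cdot)$ in one case and $\check F^2(x_2,\cdot)$ in the other, but since both base points lie in the compactly embedded set $U$, the single uniform lower bound $c_1\|\cdot\|$ controls both simultaneously.
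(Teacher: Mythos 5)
Your proposal is correct and is essentially the same argument the paper uses: both proofs replace $\check F$ with a uniform Euclidean lower bound on the compact closure of $U$, apply the strong-convexity inequality in a crossed fashion at the two points $\tfrac14 w_1$ and $\tfrac14 w_2$, add so that the $\alpha$-terms cancel, and rescale by degree-$2$ homogeneity to land on the stated right-hand side with $C_4 = 2c_1^2$. The only cosmetic difference is that the paper substitutes $c\|\cdot\|$ for $\check F$ at the outset while you keep $\check F$ symbolic and pass to the Euclidean lower bound at the end; the content is identical.
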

\begin{proof}
Due to the embedding $U\subset\subset \widetilde U$, there exists $c>0$ such that $\check F$ can be replaced by $c\Vert \cdot \Vert$.
As a consequence of (\ref{inversasubdiferencial}) and Definition \ref{definicao fortemente convexa com respeito a familia}, we obtain
\begin{align*}
c^2\left \Vert (d_v{F_*}^2)(x_1,\alpha)-(d_v{F_*}^2)(x_2,\alpha) \right \Vert^2
\leq & F^2(x_1,(d_v{F_*}^2)(x_2,\alpha))-F^2(x_1,(d_v{F_*}^2)(x_1,\alpha)) \\
&-4\alpha((d_v{F_*}^2)(x_2,\alpha)-(d_v{F_*}^2)(x_1,\alpha))
\end{align*}
and
\begin{align*}
c^2\left \Vert (d_v{F_*}^2)(x_1,\alpha)-(d_v{F_*}^2)(x_2,\alpha) \right \Vert^2
\leq & F^2(x_2,(d_v{F_*}^2)(x_1,\alpha))-F^2(x_2,(d_v{F_*}^2)(x_2,\alpha))\\
& - 4\alpha((d_v{F_*}^2)(x_1,\alpha)-(d_v{F_*}^2)(x_2,\alpha)).
\end{align*}
Adding the above equations we settle the lemma.
\end{proof}

\begin{definition}
$F:U\times\mathbb{R}^n\rightarrow \mathbb{R}$ is a horizontally $C^2$ family of asymmetric norms if it is a horizontally $C^1$ family of asymmetric norms and $\frac{\partial^2 F}{\partial x^i\partial x^j}:U\times \mathbb{R}^n \backslash \{ 0 \}\rightarrow\mathbb{R}^n$ is continuous for every $i,j=1,\ldots,n$.
\end{definition}

\begin{proposition}
\label{condicao Lipschitz horizontal para dF*}
Let $U\subset\subset \widetilde U$ be open subsets of $\mathbb{R}^n$, $\check F:\widetilde U\times\mathbb{R}^n\rightarrow \mathbb{R}$ be a smooth family of asymmetric norms and $F:\widetilde U\times\mathbb{R}^n\rightarrow \mathbb{R}$ be a horizontally $C^2$ family of strongly convex asymmetric norms with respect to $\check F$ such that
\begin{equation}
d_hF^2:\widetilde U \times \mathbb{R}^n\rightarrow {\mathbb R}^{n*} 
\end{equation}
is locally Lipschitz.
Then, given $(x,\beta)\in U\times\mathbb{R}^{n*}$, there are 
neighborhoods $U''\subset U$ of $x$, $V''\subset\mathbb{R}^{n*}$ of $\beta$ and a constant $C_5 > 0$ such that 
\begin{equation*}
\left \Vert d_v{F_*}^2(x_1,\alpha)-d_v{F_*}^2(x_2,\alpha) \right \Vert \leq C_5 \left \Vert x_1-x_2 \right \Vert
\end{equation*}
for every $x_1,x_2\in U''$ and $\alpha\in V''$.
\end{proposition}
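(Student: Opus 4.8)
The plan is to combine the ``mixed second difference'' estimate of Lemma~\ref{diferenca simetrica} with a first-order Taylor estimate, in the horizontal direction, for the map $x\mapsto F^2(x,q)$: the Taylor estimate converts an increment of $x$ into an increment of the horizontal differential, controlled by the hypothesis that $d_hF^2$ is locally Lipschitz, while Lemma~\ref{diferenca simetrica} converts that into a bound on the increment of $d_v{F_*}^2$. Everything has to be done near $(x,\beta)$, which is exactly why the conclusion is only local.

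First I would localize. Write $p:=d_v{F_*}^2(x,\beta)$ and $p_i:=d_v{F_*}^2(x_i,\alpha)$ for $i=1,2$. By Proposition~\ref{continuidade de dF*} the map $d_v{F_*}^2$ is continuous, so there are a convex open neighborhood $U''\subset\subset U$ of $x$ and a bounded open neighborhood $V''$ of $\beta$ such that $p_1,p_2$ belong to a fixed compact neighborhood $K$ of $p$ whenever $x_1,x_2\in U''$ and $\alpha\in V''$. Since $d_hF^2$ is locally Lipschitz and $\overline{U''}\times K$ is compact, there is $L>0$ with $\Vert d_hF^2(z,q)-d_hF^2(z,q')\Vert_*\le L\,\Vert q-q'\Vert$ for all $z\in\overline{U''}$ and $q,q'\in K$.

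Next, observe that the right-hand side of the inequality in Lemma~\ref{diferenca simetrica} equals $h(x_1)-h(x_2)$, where $h(z):=F^2(z,p_2)-F^2(z,p_1)$. Since $F$ is horizontally $C^1$, for each fixed $q$ the function $z\mapsto F^2(z,q)$ is of class $C^1$ with differential $d_hF^2(z,q)\in\mathbb R^{n*}$ (its $i$-th horizontal partial derivative is $2F\,\frac{\partial F}{\partial x^i}$, a continuous function). As $U''$ is convex, integrating along the segment $\sigma(t)=x_2+t(x_1-x_2)$ gives
\[
h(x_1)-h(x_2)=\int_0^1\bigl[d_hF^2(\sigma(t),p_2)-d_hF^2(\sigma(t),p_1)\bigr](x_1-x_2)\,dt ,
\]
and the Lipschitz bound above turns this into $h(x_1)-h(x_2)\le L\,\Vert p_1-p_2\Vert\,\Vert x_1-x_2\Vert$.

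Finally, feeding this into Lemma~\ref{diferenca simetrica} (applied with $U''$ in place of $U$; denote by $C_4>0$ the resulting constant), we obtain
\[
C_4\,\Vert p_1-p_2\Vert^2\le h(x_1)-h(x_2)\le L\,\Vert p_1-p_2\Vert\,\Vert x_1-x_2\Vert ,
\]
so dividing by $\Vert p_1-p_2\Vert$ (the case $p_1=p_2$ being trivial) yields $\Vert p_1-p_2\Vert\le (L/C_4)\,\Vert x_1-x_2\Vert$, which is the asserted inequality with $C_5=L/C_4$. The step I expect to be the crux is the localization: the Lipschitz constant of $d_hF^2$ is only available on compact sets, so one genuinely needs the continuity of $d_v{F_*}^2$ to keep $p_1$ and $p_2$ inside a fixed compact $K$ --- without it, $L$ could blow up --- and this is precisely what forces the statement to be local in both $x$ and $\beta$. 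Apart from making $d_hF^2$ a well-behaved object, the horizontal $C^2$ hypothesis is not used in an essential way here; the substantive ingredients are the $C^1$-regularity used in the Taylor step and the strong convexity packaged in Lemma~\ref{diferenca simetrica}.
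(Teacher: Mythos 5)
Your proof is correct, and it takes a genuinely different and in fact simpler route than the paper's. The paper expands $z\mapsto F^2(z,(d_v{F_*}^2)(x_i,\alpha))$ to \emph{second} order around $x_1$ and $x_2$ with Lagrange remainders, substitutes this into the inequality from Lemma~\ref{diferenca simetrica}, and obtains a quadratic inequality $C_4t^2-2K_1\epsilon t+r\le 0$ in $t=\Vert(d_v{F_*}^2)(x_1,\alpha)-(d_v{F_*}^2)(x_2,\alpha)\Vert$; it then has to split into cases according to the sign of the remainder $r$ and bound $r$ using the infimum $K_2$ of the second horizontal derivatives on a compact set, which is where the horizontal $C^2$ hypothesis is actually consumed. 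You instead notice that the entire right-hand side of Lemma~\ref{diferenca simetrica} is the difference $h(x_1)-h(x_2)$ of a single $C^1$ function $h(z)=F^2(z,p_2)-F^2(z,p_1)$, express it as a line integral of $d_hF^2(\cdot,p_2)-d_hF^2(\cdot,p_1)$ over the segment from $x_2$ to $x_1$, and bound the integrand pointwise using only the Lipschitz continuity of $d_hF^2$ in its second slot. This immediately yields the estimate $C_4\Vert p_1-p_2\Vert^2\le L\Vert p_1-p_2\Vert\,\Vert x_1-x_2\Vert$, and dividing out one power of $\Vert p_1-p_2\Vert$ finishes the proof without any quadratic-formula case analysis and without ever invoking $d_h^2F^2$. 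Your argument thus establishes the proposition under the weaker hypotheses ``horizontally $C^1$ and $d_hF^2$ locally Lipschitz,'' showing that the $C^2$ assumption in the statement is not actually needed; the localization step via Proposition~\ref{continuidade de dF*} to keep the points $p_i$ in a fixed compact set is exactly the same in both proofs, and is, as you correctly emphasize, what makes the result local.
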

\begin{proof}
By Lemma \ref{diferenca simetrica}, there exists $C_4 > 0$ such that
\begin{align}
\label{inequacao diferenca simetrica}
C_4\left \Vert (d_v{F_*}^2)(x_1,\alpha)-(d_v{F_*}^2)(x_2,\alpha) \right \Vert^2
\leq & F^2(x_1,(d_v{F_*}^2)(x_2,\alpha))+F^2(x_2,(d_v{F_*}^2)(x_1,\alpha)) \nonumber \\
& - F^2(x_1,(d_v{F_*}^2)(x_1,\alpha))-F^2(x_2,(d_v{F_*}^2)(x_2,\alpha))
\end{align}
for every $x_1,x_2\in U$ and $\alpha\in \mathbb{R}^{n*}$. Using Taylor series of $F^2(\cdot , (d_v F^2_\ast)(x_2, \alpha))$ around $x_1$ we have
\begin{align*}
F^2(x_3,(d_v{F_*}^2)(x_2,\alpha))
= & F^2(x_1,(d_v{F_*}^2)(x_2,\alpha))+(d_hF^2)(x_1,(d_v{F_*}^2)(x_2,\alpha))(x_3-x_1) \\
& + r_{1,2}(x_3-x_1,\alpha).
\end{align*}
where $r_{1,2}(x_3 - x_1, \alpha)$ is the Lagrange remainder. Replacing $x_3$ by $x_2$, we have
\begin{align}
F^2(x_1,(d_v{F_*}^2)(x_2,\alpha))
= & F^2(x_2,(d_v{F_*}^2)(x_2,\alpha))-(d_hF^2)(x_1,(d_v{F_*}^2)(x_2,\alpha))(x_2-x_1) \nonumber \\
& - r_{1,2}(x_2-x_1,\alpha). \label{equacao serie de Taylor 1}
\end{align}
Inverting the roles of $x_1$ and $x_2$, we have
\begin{align}
F^2(x_2,(d_v{F_*}^2)(x_1,\alpha))
 = & F^2(x_1,(d_v{F_*}^2)(x_1,\alpha))-(d_hF^2)(x_2,(d_v{F_*}^2)(x_1,\alpha))(x_1-x_2) \nonumber \\
& - r_{2,1}(x_1-x_2,\alpha). \label{equacao serie de Taylor 2}
\end{align}
Therefore, replacing (\ref{equacao serie de Taylor 1}) and (\ref{equacao serie de Taylor 2}) in  (\ref{inequacao diferenca simetrica}), we have
\begin{align*}
& C_4\left \Vert (d_v{F_*}^2)(x_1,\alpha)-(d_v{F_*}^2)(x_2,\alpha) \right \Vert^2 \\
\leq & \left \Vert(d_hF^2)(x_1,(d_v{F_*}^2)(x_2,\alpha))-(d_hF^2)(x_2,(d_v{F_*}^2)(x_1,\alpha))\right\Vert_* \left \Vert(x_1-x_2)\right\Vert - r
\end{align*}
for every $x_1,x_2\in U$ and $\alpha\in\mathbb{R}^{n*}$, where $r=r_{1,2}(x_2-x_1,\alpha)+r_{2,1}(x_1-x_2,\alpha)$. 

Since $(x,y)\mapsto(d_hF^2)(x,y)$ is locally Lipschitz, given $(x,\beta)\in U\times\mathbb{R}^{n*}$, there are neighborhoods $U'_1\subset\subset U$ of $x$, $U_2^\prime \subset \subset \mathbb{R}^{n}$ of $d_vF_*^2(x,\beta)$ and a constant $K_1>0$ such that
\begin{equation*}
\left\Vert(d_hF^2)(x_1,y_1)-(d_hF^2)(x_2,y_2)\right\Vert_*\leq K_1\left\Vert(x_1-x_2,y_1-y_2)\right\Vert
\end{equation*}
for every $(x_1,y_1), (x_2,y_2)\in U^\prime_1 \times U^\prime_2$. 

We have that ${(d_v{F_*}^2)}^{-1}(U^\prime_2)\cap (U^\prime_1 \times \mathbb{R}^{n*})$ is a neighborhood of $(x,\beta)$ as a consequence of  Proposition \ref{continuidade de dF*}. 
Consider a convex neighborhood $U''\times V''\subset\subset {(d_v{F_*}^2)}^{-1}(U_2^\prime)\cap (U_1^\prime\times \mathbb{R}^{n*})$ of $(x,\beta)$. 
Thus,
\begin{align*}
& \left \Vert (d_v{F_*}^2)(x_1,\alpha)-(d_v{F_*}^2)(x_2,\alpha) \right \Vert^2 \\
\leq & \frac{K_1}{C_4}\left \Vert \left(x_1-x_2,(d_v{F_*}^2)(x_2,\alpha)-(d_v{F_*}^2)(x_1,\alpha)\right)\right\Vert \left \Vert(x_1-x_2)\right\Vert-\frac{r}{C_4} \\
\leq & \frac{2K_1}{C_4}\max\{\left \Vert(x_1-x_2)\right\Vert,\left\Vert (d_v{F_*}^2)(x_2,\alpha)-(d_v{F_*}^2)(x_1,\alpha)\right\Vert\}\left \Vert(x_1-x_2)\right\Vert - \frac{r}{C_4}
\end{align*}
for every $x_1,x_2\in U''$ and $\alpha\in V''$.

If $(x_1,\alpha),(x_2,\alpha)\in U''\times V''$ are such that
\begin{equation*}
\left\Vert (d_v{F_*}^2)(x_2,\alpha)-(d_v{F_*}^2)(x_1,\alpha)\right\Vert\leq \left \Vert(x_1-x_2)\right\Vert,
\end{equation*}
then there is nothing to prove. Suppose that $(x_1,\alpha)$, $(x_2,\alpha)\in U''\times V''$ are such that
\begin{equation*}
\left\Vert (d_v{F_*}^2)(x_2,\alpha)-(d_v{F_*}^2)(x_1,\alpha)\right\Vert > \left \Vert(x_1-x_2)\right\Vert.
\end{equation*}
In this case,
\begin{align*}
C_4\left \Vert (d_v{F_*}^2)(x_1,\alpha)-(d_v{F_*}^2)(x_2,\alpha) \right \Vert^2
\leq 2K_1 \left \Vert (d_v{F_*}^2)(x_1,\alpha)-(d_v{F_*}^2)(x_2,\alpha)\right\Vert \left \Vert(x_1-x_2)\right\Vert-r.
\end{align*}
Writing $t=\left \Vert (d_v{F_*}^2)(x_1,\alpha)-(d_v{F_*}^2)(x_2,\alpha) \right \Vert$ and $\left \Vert x_1-x_2 \right \Vert=\epsilon$, it follows that
\[
C_4 t^2-2 K_1 \epsilon t+r\leq 0.
\] 
In particular, the polynomial equation  $C_4 t^2-2K_1\epsilon t+r=0$ admits at least one real root. If $r\geq 0$, then
$$C_4 t^2-2K_1\epsilon t\leq 0$$
and
$$\left\Vert(d_v{F_*}^2)(x_1,\alpha)-(d_v{F_*}^2)(x_2,\alpha)\right\Vert\leq\frac{2K_1}{C_4}\left\Vert(x_1-x_2)\right\Vert$$
what settles this case. Finally suppose that $r<0$. In this case $t$ is bounded above by the largest root of the quadratic equation $C_4 t^2-2K_1\epsilon t+r=0$, which is
$$\frac{K_1\epsilon+\sqrt{K_1^2\epsilon^2- C_4 r}}{C_4}.$$
Using the Lagrange remainder in the Taylor series, we can write
\begin{equation*}
r_{1,2}(x_2-x_1,\alpha)=\frac{1}{2}(d_h^2F^2)(x_1+\theta_{1,2}(x_2-x_1),(d_v{F_*}^2)(x_2,\alpha))(x_2-x_1)^2
\end{equation*}
and
\begin{equation*}
r_{2,1}(x_1-x_2,\alpha)=\frac{1}{2}(d_h^2F^2)(x_2+\theta_{2,1}(x_1-x_2),(d_v{F_*}^2)(x_1,\alpha))(x_1-x_2)^2,
\end{equation*}
where $\theta_{1,2},\theta_{2,1}\in[0,1]$. Set
\begin{align*}
K_2=\inf_{\substack{\tilde x_1 \in U'' \\ \tilde x_2 \in U_2^\prime \\ \left \Vert v \right \Vert=1}} & ((d_h^2F^2)(\tilde x_1,\tilde x_2))v^2.
\end{align*}
Thus $K_2<0$ because $r<0$, the inequality $-r\leq-K_2\epsilon^2$ holds and 
\begin{equation*}
t\leq\frac{K_1+\sqrt{K_1^2-C_4 K_2}}{C_4}\epsilon.
\end{equation*}
Therefore,
\begin{equation*}
\left \Vert (d_v{F_*}^2)(x_1,\alpha)-(d_v{F_*}^2)(x_2,\alpha) \right \Vert\leq C_5 \left \Vert x_1-x_2 \right \Vert,
\end{equation*}
where $C_5 = \frac{K_1 +\sqrt{K_1^2-C_4 K_2}}{C_4}$, what settles the proposition.
\end{proof}

\begin{theorem}
\label{dF* localmente Lipschitz}
Under the hypotheses of Proposition \ref{condicao Lipschitz horizontal para dF*},
$$d_v{F_*}^2:U\times\mathbb{R}^{n*}\rightarrow \mathbb{R}^n$$
is locally Lipschitz.
\end{theorem}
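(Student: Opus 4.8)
The plan is to obtain local Lipschitzness of $d_v{F_*}^2$ by combining the two one-directional estimates already established: the vertical bound of Proposition \ref{condicao de lipschitz vertical para dF*} and the local horizontal bound of Proposition \ref{condicao Lipschitz horizontal para dF*}. Fix an arbitrary point $(x,\beta)\in U\times\mathbb{R}^{n*}$. First I would apply Proposition \ref{condicao Lipschitz horizontal para dF*} to obtain neighborhoods $U''\subset U$ of $x$ and $V''\subset\mathbb{R}^{n*}$ of $\beta$, together with a constant $C_5>0$, such that $\Vert d_v{F_*}^2(x_1,\alpha)-d_v{F_*}^2(x_2,\alpha)\Vert\le C_5\Vert x_1-x_2\Vert$ for all $x_1,x_2\in U''$ and $\alpha\in V''$. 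Since $U''\subset U\subset\subset\widetilde U$, Proposition \ref{condicao de lipschitz vertical para dF*} provides a constant $C_3>0$, uniform in the covector variable, such that $\Vert d_v{F_*}^2(x,\alpha_1)-d_v{F_*}^2(x,\alpha_2)\Vert\le C_3\Vert\alpha_1-\alpha_2\Vert_*$ for every $x\in U''$ and all $\alpha_1,\alpha_2\in\mathbb{R}^{n*}$.

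Then, for any two points $(x_1,\alpha_1),(x_2,\alpha_2)\in U''\times V''$, I would interpolate through the intermediate point $(x_1,\alpha_2)$ and use the triangle inequality together with the two estimates above:
\begin{align*}
\Vert d_v{F_*}^2(x_1,\alpha_1)-d_v{F_*}^2(x_2,\alpha_2)\Vert
&\le \Vert d_v{F_*}^2(x_1,\alpha_1)-d_v{F_*}^2(x_1,\alpha_2)\Vert \\
&\quad +\Vert d_v{F_*}^2(x_1,\alpha_2)-d_v{F_*}^2(x_2,\alpha_2)\Vert \\
&\le C_3\Vert\alpha_1-\alpha_2\Vert_* + C_5\Vert x_1-x_2\Vert .
\end{align*}
Bounding the right-hand side by $\max\{C_3,C_5\}\bigl(\Vert\alpha_1-\alpha_2\Vert_*+\Vert x_1-x_2\Vert\bigr)$ and invoking the equivalence of norms on $\mathbb{R}^n\times\mathbb{R}^{n*}$ produces a genuine Lipschitz constant for $d_v{F_*}^2$ on $U''\times V''$. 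Since $(x,\beta)$ was arbitrary, this yields local Lipschitzness on all of $U\times\mathbb{R}^{n*}$.

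In this argument there is no serious obstacle: the difficult analytic work — absorbing the strong convexity, the horizontally $C^2$ hypothesis, and the Lipschitz continuity of $d_hF^2$ — was already carried out in Proposition \ref{condicao Lipschitz horizontal para dF*}, while the vertical control came essentially for free from strong convexity in Proposition \ref{condicao de lipschitz vertical para dF*}. The only point deserving attention is the compatibility of the domains: one must first fix the horizontal neighborhood $U''\times V''$ supplied by Proposition \ref{condicao Lipschitz horizontal para dF*} and only afterwards observe that the vertical estimate applies there, which is immediate because that estimate holds on all of $U\times\mathbb{R}^{n*}$ once $U\subset\subset\widetilde U$ and is uniform in $\alpha$.
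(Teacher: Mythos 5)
Your proposal is correct and follows the same route as the paper, which simply cites Propositions \ref{condicao de lipschitz vertical para dF*} and \ref{condicao Lipschitz horizontal para dF*}; you have only made explicit the standard triangle-inequality interpolation through the intermediate point $(x_1,\alpha_2)$ that the paper leaves implicit. The domain compatibility check you flag at the end is exactly the right observation, and it goes through because the vertical estimate holds uniformly on all of $U\times\mathbb{R}^{n*}$.
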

\begin{proof}
The proof follows from Propositions \ref{condicao de lipschitz vertical para dF*} and \ref{condicao Lipschitz horizontal para dF*}.
\end{proof}

\section{A Locally Lipschitz Case}
\label{A Locally Lipschitz Case}
In this section we will use the concepts introduced in the previous section in order to present a family of of Pontryagin type $C^0$-Finsler structures whose extended geodesic field is a locally Lipschitz vector field.

Let $(M,F)$ be a $C^0$-Finsler manifold.
If $\tau: TU \rightarrow U \times \mathbb R^n$ is a local trivialization of an open subset $U$ of $M$ and $\phi=(x^1,\ldots,x^n): U \rightarrow \mathbb R^n$ is a coordinate system, then $\tau_\phi := (\phi \times \text{id}) \circ \tau$ is a coordinate system on $TU$.
Suppose that for each $p \in M$ there exists a coordinate system $\tau_\phi$ of $TU$ such that $F \circ (\tau_\phi)^{-1} (x^1, \ldots, x^n, v^1, \ldots, v^n)$ is a horizontally $C^1$ family of asymmetric norms.
It is straightforward that the horizontal smoothness at $p$ depends only on $\tau$ (it doesn't depend on the choice of $\phi$).

\begin{proposition}
Let $(M,F)$ be a $C^0$-Finsler manifold and suppose that $F$ is locally represented in a coordinate system $\tau_\phi$ as a horizontally $C^1$ family of asymmetric norms. Then $F$ is of Pontryagin type.
\end{proposition}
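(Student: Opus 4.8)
The plan is to produce, near each point $p\in M$, the family of $C^1$ unit vector fields required by Definition \ref{horizontally smooth} directly from the radial parametrization of the $F$-unit spheres. By hypothesis there are a neighborhood $U$ of $p$, a coordinate system $\phi$ on $U$ and a trivialization $\tau$ of $TU$ such that, in the coordinates $\tau_\phi=(x^1,\ldots,x^n,v^1,\ldots,v^n)$, the function $(x,v)\mapsto F(x,v)$ is a horizontally $C^1$ family of asymmetric norms in the sense of Definition \ref{familia horizontalmente c1 de normas Rn}. By Remark \ref{mudanca de coordenadas - trivializacao local} it is enough to verify Conditions (1), (2) and (3) of Definition \ref{horizontally smooth} with respect to the coordinate system $\tau_\phi$. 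For $u\in S^{n-1}\subset\mathbb R^n$ I would set, in these coordinates,
\[
X_u(x)=\frac{u}{F(x,u)},
\]
which is well defined since $F(x,\cdot)$ is an asymmetric norm, so $F(x,u)>0$ for $u\neq 0$.

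First I would check that $X_u(x)$ lies on the $F$-unit sphere: by positive homogeneity of $F(x,\cdot)$ one has $F(x,X_u(x))=F(x,u)^{-1}F(x,u)=1$, hence $X_u(x)\in S_F[x,0,1]$. For Condition (1), the map $u\mapsto X_u(x)$ from $S^{n-1}$ to $S_F[x,0,1]$ is continuous and admits the continuous inverse $y\mapsto y/\Vert y\Vert$ (Euclidean normalization): every ray issuing from the origin meets $S_F[x,0,1]$ in exactly one point, because $B_F[x,0,1]$ is convex, bounded and contains the origin in its interior. Thus $u\mapsto X_u(x)$ is a homeomorphism onto $S_F[x,0,1]$. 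Condition (2), the joint continuity of $(x,u)\mapsto X_u(x)$, is immediate from the continuity of $F$ together with the fact that $F$ does not vanish on $U\times S^{n-1}$.

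For Condition (3) I would use that, for a horizontally $C^1$ family of asymmetric norms and each fixed $u$, the slice $x\mapsto F(x,u)$ has first-order partial derivatives that exist and are continuous, hence is of class $C^1$. Therefore $x\mapsto X_u^i(x)=u^i/F(x,u)$ is $C^1$ on $U$, with
\[
\frac{\partial X_u^i}{\partial x^j}(x)=-\,u^i\,\frac{\partial F}{\partial x^j}(x,u)\,F(x,u)^{-2},
\]
so each $X_u$ is a $C^1$ vector field. The right-hand side is continuous in $(x,u)$ because $(x,v)\mapsto\partial F/\partial x^j(x,v)$ is continuous by Definition \ref{familia horizontalmente c1 de normas Rn}, $F$ is continuous, and $F(x,u)>0$; this yields Condition (3). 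Hence $\{X_u\}_{u\in S^{n-1}}$ satisfies all the requirements of Definition \ref{horizontally smooth} at $p$, and since $p$ is arbitrary, $F$ is of Pontryagin type.

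The argument is essentially routine; the point that deserves the most care is Condition (1), namely that $u\mapsto u/F(x,u)$ is genuinely a bijection of $S^{n-1}$ onto $S_F[x,0,1]$ with continuous inverse, which rests on convexity and boundedness (with $0$ an interior point) of the unit ball of the asymmetric norm $F(x,\cdot)$. A second, purely technical point is ensuring the chain-rule computation of $\partial X_u^i/\partial x^j$ is legitimate although $F$ is only horizontally $C^1$ and not jointly differentiable; this is fine precisely because, for fixed $u$, the slice $x\mapsto F(x,u)$ is $C^1$.
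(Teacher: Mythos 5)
Your proof is correct and takes essentially the same route as the paper: define $X_u(x)=u/F(x,u)$ in the coordinates $\tau_\phi$ (the paper writes this as $X_u=\check X_u/F(\check X_u)$ with $\check X_u$ the constant field $u$), verify Conditions (1)--(3) of Definition~\ref{horizontally smooth}, and invoke Remark~\ref{mudanca de coordenadas - trivializacao local} to conclude. The paper declares the verification of the three conditions to be straightforward; you have merely spelled it out.
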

\begin{proof}
Considering the control set $C$ as the unit sphere $S^{n-1}$, for each $u\in C$ corresponds a smooth vector field $\check X_u$ given by
\[
(x^1, \ldots, x^n) \mapsto (x^1, \ldots x^n, u^1, \ldots, u^n)
\]
with respect to the coordinates $\phi$ and $\tau_\phi$ on $U$ and $TU$ respectively. Define the family of vector fields
\[
X_u = \frac{\check X_u}{F(\check X_u)}.
\]
It is straightforward that $\{ X_u; u\in S^{n-1}\}$, satisfy all the conditions of Definition \ref{horizontally smooth} with respect to $\tau_\phi$ instead of $\phi_{TU}$. 
But Remark \ref{mudanca de coordenadas - trivializacao local} states that this is enough to prove that $X_u$, $u\in C$, satisfy all the conditions of Definition \ref{horizontally smooth} with respect to $\phi_{TU}$.
Therefore $(M,F)$ is a Pontryagin type $C^0$-Finsler manifold.
\end{proof}

\begin{definition}
\label{fortemente convexa variedades}
Let $(M,F)$ be a $C^0$-Finsler manifold and let $\check F$ be a Finsler structure on $M$. We say that $F$ is strongly convex with respect to $\check F$ if 
\begin{equation}
F^2(x,z)\geq F^2(x,y)+ \alpha(z-y) +\check F^2(x,z-y)
\end{equation}
for every $x\in M$, $y,z\in T_xM$ and $\alpha\in\partial F^2(x,y)$.
\end{definition}

The next lemma state that the concept of strong convexity can be transferred to $\tau_\phi(TU)$.
\begin{lemma}
Let $(M,F)$ be a $C^0$-Finsler manifold and $U$ be an open subset of $M$.
Let $\tau_\phi:TU \rightarrow \phi(U) \times \mathbb{R}^n$ be a parameterization of $TU$ such that $F_{\tau_\phi} = F\circ (\tau_\phi)^{-1}$ is a horizontally $C^1$ family of asymmetric norms.
Then $F$ is strongly convex with respect to a Finsler structure $\check F$ iff $F_{\tau_\phi}$ is strongly convex with respect to $\check F_{\tau_\phi}:=\check F \circ (\tau_\phi)^{-1}$. 
Moreover $\alpha(x, \cdot)\in\partial F^2(x,y)$ iff $\alpha_{\tau_\phi}(x,\cdot)\in\partial F^2_{\tau_\phi}(\tau_\phi(x,y))$.
\end{lemma}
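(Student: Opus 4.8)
The plan is to exploit that the parameterization $\tau_\phi$ is linear on each fiber. Since $\tau\colon TU\to U\times\mathbb R^n$ is a local trivialization it restricts to a linear isomorphism $T_xM\to\{x\}\times\mathbb R^n$ for every $x\in U$, and $\phi\times\id$ is the identity on the $\mathbb R^n$ factor; hence $\tau_\phi=(\phi\times\id)\circ\tau$ restricts to a linear isomorphism $L_x\colon T_xM\to\{\phi(x)\}\times\mathbb R^n\cong\mathbb R^n$ on each fiber. Consequently, for every $v\in\mathbb R^n$ one has $F_{\tau_\phi}(\phi(x),v)=F(x,L_x^{-1}v)$ and $\check F_{\tau_\phi}(\phi(x),v)=\check F(x,L_x^{-1}v)$, while a covector $\alpha(x,\cdot)$ on $T_xM$ corresponds under $\tau_\phi$ to $\alpha_{\tau_\phi}(x,\cdot)=\alpha(x,\cdot)\circ L_x^{-1}$ on $\mathbb R^n$. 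First I would record that $\check F_{\tau_\phi}=\check F\circ(\tau_\phi)^{-1}$ is smooth on $\phi(U)\times(\mathbb R^n\setminus 0)$ and fiberwise an asymmetric norm, hence a smooth family of asymmetric norms (Definition \ref{familia suave de normas euclidianas}), and that $F_{\tau_\phi}$ is a horizontally $C^1$ family of asymmetric norms by hypothesis, so that both Definition \ref{fortemente convexa variedades} and Definition \ref{definicao fortemente convexa com respeito a familia} are applicable and the asserted equivalence is meaningful.

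Next I would establish the subdifferential correspondence, which is the ``moreover'' part and the engine of the proof. By definition $\alpha(x,\cdot)\in\partial F^2(x,y)$ means $F^2(x,z)\ge F^2(x,y)+\alpha(x,z-y)$ for every $z\in T_xM$. Putting $v=L_xy$ and performing the bijective substitution $z=L_x^{-1}w$, this inequality turns, term by term and using linearity of $L_x$, into
\[
F_{\tau_\phi}^2(\phi(x),w)\ge F_{\tau_\phi}^2(\tau_\phi(x,y))+\alpha_{\tau_\phi}(x,w-v)\qquad\text{for every }w\in\mathbb R^n,
\]
that is, into $\alpha_{\tau_\phi}(x,\cdot)\in\partial F_{\tau_\phi}^2(\tau_\phi(x,y))$. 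Because $z\mapsto L_xz$ is a bijection, the two universal quantifiers ``for every $z\in T_xM$'' and ``for every $w\in\mathbb R^n$'' correspond exactly, so this is a genuine equivalence.

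Finally, for the strong-convexity equivalence I would simply transport the two defining inequalities through $L_x$, using the correspondence just proved. Assume $F$ is strongly convex with respect to $\check F$; fix $x\in U$, $v,w\in\mathbb R^n$, and $\alpha_{\tau_\phi}(x,\cdot)\in\partial F_{\tau_\phi}^2(\phi(x),v)$, so that $\alpha_{\tau_\phi}(x,\cdot)\circ L_x\in\partial F^2(x,L_x^{-1}v)$. Applying Definition \ref{fortemente convexa variedades} at $y=L_x^{-1}v$, $z=L_x^{-1}w$ and rewriting each term via $F_{\tau_\phi}$, $\check F_{\tau_\phi}$ and $\alpha_{\tau_\phi}$ yields
\[
F_{\tau_\phi}^2(\phi(x),w)\ge F_{\tau_\phi}^2(\phi(x),v)+\alpha_{\tau_\phi}(x,w-v)+\check F_{\tau_\phi}^2(\phi(x),w-v),
\]
which is precisely the condition of Definition \ref{definicao fortemente convexa com respeito a familia}; since $\tau_\phi$ is invertible the same computation read in reverse gives the converse, and since the defining condition is checked pointwise in the base point it suffices to argue at each fixed $x\in U$. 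I do not expect a genuine obstacle here: the only points requiring care are the \emph{fiberwise linearity} of $\tau_\phi$ --- which is exactly what makes $F^2$, its subdifferential, and $\check F^2$ transform covariantly --- and the bookkeeping that the change of variables $z\mapsto L_xz$ is onto, so that no quantifier is lost when passing between $T_xM$ and $\mathbb R^n$.
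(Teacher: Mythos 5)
Your proposal is correct and takes essentially the same route as the paper: the paper's proof is a one-line observation that the two defining inequalities transform into one another under $\tau_\phi$, and your argument is precisely the expanded version of that observation, making explicit the fiberwise linearity $L_x$, the change of variables $z\mapsto L_x z$, and the subdifferential correspondence that the paper leaves implicit.
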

\begin{proof}
It is enough to notice that

\[
F^2(x,z) \geq F^2(x,y) + \alpha (x,z-y) + \check F(x,z-y)
\]
if and only if
\[
F_{\tau_\phi}^2(\tau_\phi (x,z)) \geq  F_{\tau_\phi}^2(\tau_\phi (x,y)) + \alpha_{\tau_\phi} (\tau_\phi(x,y-z)) + \check F^2_{\tau_\phi}(\tau_\phi(x,y-z)).
\]
\end{proof}

\begin{theorem}
\label{campo geodesico Lipschitz}
Let $(M,F)$ be a $C^0$-Finsler manifold which is strongly convex with respect to a Finsler structure $\check F$ and consider an open subset $U \subset M$. 
Suppose that there exist a local trivialization $\tau: TU \rightarrow U \times \mathbb R^n$ and a coordinate system $\phi:U \rightarrow \mathbb R^n$ such that $F_{\tau_\phi} = F\circ (\tau_\phi)^{-1}$ is a horizontally $C^2$ family of asymmetric norms and that $d_hF^2_{\tau_\phi}$ is locally Lipschitz. Then the extended geodesic field $\mathcal E$ of $(U,F)$ is a locally Lipschitz vector field on $T^\ast U$. 
\end{theorem}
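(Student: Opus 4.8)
The plan is to use the strong convexity hypothesis to reduce the statement to the regularity results of Sections~\ref{asymmetric norms}--\ref{Horizontally C1 family of asymmetric norms subsection}, namely the local Lipschitz continuity of $d_v{F_*}^2$ (Theorem~\ref{dF* localmente Lipschitz}) and of the family of dual asymmetric norms (Theorem~\ref{fam dual local lipschtz}), and then to perform a bookkeeping of compositions, products and quotients of locally Lipschitz maps. First I would note that, since $F$ is strongly convex with respect to $\check F$, Definition~\ref{fortemente convexa variedades} forces $F^2(x,z)>F^2(x,y)+\alpha(z-y)$ whenever $z\neq y$ and $\alpha\in\partial F^2(x,y)$, so $F(x,\cdot)$ is strictly convex for every $x$; hence $\mathcal E$ is a single-valued vector field on $T^*U\backslash 0$ by the first proposition following Definition~\ref{geodesic flow}. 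Since local Lipschitz continuity of a vector field is preserved under smooth coordinate changes, it suffices to check it in the chart $\tau_\phi$, over a convex open set $U''\subset\subset U$ around an arbitrary point, and one may use the family of $C^1$ unit vector fields $X_u=\check X_u/F(\check X_u)$ built in the proposition preceding Definition~\ref{fortemente convexa variedades}, for which $f^i(x,u)=u^i/F_{\tau_\phi}(x,u)$ in the coordinates $\tau_\phi$.

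For the base component, $f^i(x,u(x,\alpha))$ is the $i$-th coordinate of the unique point of $S_F[x,0,1]$ that maximizes $\alpha$, so by Proposition~\ref{unico vetor que maximiza o funcional caso estritamente convexo} and Lemma~\ref{igualdadefundamental} applied fiberwise,
\[
f^i(x,u(x,\alpha))=\frac{\bigl[(d_v{F_*}^2)(x,\alpha)\bigr]^i}{2F_*(x,\alpha)},
\]
where $F_*(x,\cdot)$ denotes the dual asymmetric norm of $F_{\tau_\phi}(x,\cdot)$. The hypotheses of Proposition~\ref{condicao Lipschitz horizontal para dF*} hold here: $\check F_{\tau_\phi}=\check F\circ(\tau_\phi)^{-1}$ is a smooth family of asymmetric norms (as $\check F$ is a Finsler structure), $F_{\tau_\phi}$ is a horizontally $C^2$ family of asymmetric norms that is strongly convex with respect to $\check F_{\tau_\phi}$ by the lemma preceding this theorem, and $d_hF^2_{\tau_\phi}$ is locally Lipschitz by assumption. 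Hence Theorem~\ref{dF* localmente Lipschitz} makes $d_v{F_*}^2$ locally Lipschitz, Theorem~\ref{fam dual local lipschtz} makes $F_*$ locally Lipschitz, and, being positive and $1$-homogeneous in $\alpha$, $F_*$ satisfies $F_*(x,\alpha)\geq c\Vert\alpha\Vert_\ast$ on $\overline{U''}\times\mathbb{R}^{n*}$ for some $c>0$; therefore $f^i(x,u(x,\alpha))$ is locally Lipschitz on $U''\times(\mathbb{R}^{n*}\backslash 0)$.

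For the fiber component, differentiating $f^j(x,u)=u^j/F_{\tau_\phi}(x,u)$ in $x^i$ with $u$ held fixed gives
\[
\alpha_j\,\frac{\partial f^j}{\partial x^i}(x,u)=-\,\frac{\alpha(u)}{F_{\tau_\phi}^2(x,u)}\,\frac{\partial F_{\tau_\phi}}{\partial x^i}(x,u),
\]
which is to be evaluated at $u=u(x,\alpha)=y(x,\alpha)/\Vert y(x,\alpha)\Vert$, where $y(x,\alpha)$ is the maximizer of the preceding paragraph. Since $F_{\tau_\phi}(x,y(x,\alpha))=1$ and $F_{\tau_\phi}(x,\cdot)\leq C\Vert\cdot\Vert$ on $\overline{U''}$, the Euclidean norm $\Vert y(x,\alpha)\Vert$ is bounded below by a positive constant, so $u(x,\alpha)$ is locally Lipschitz with values in $S^{n-1}$. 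Moreover $\partial F_{\tau_\phi}/\partial x^i=(\partial F^2_{\tau_\phi}/\partial x^i)/(2F_{\tau_\phi})$ is locally Lipschitz on $U''\times(\mathbb{R}^n\backslash 0)$, because $d_hF^2_{\tau_\phi}$ is locally Lipschitz, $F_{\tau_\phi}$ is locally Lipschitz (it is a horizontally $C^1$ family of asymmetric norms) and $F_{\tau_\phi}$ is bounded away from $0$ on $\overline{U''}\times S^{n-1}$. Evaluating the displayed expression at $(x,u(x,\alpha))$ — which stays in $U''\times S^{n-1}$, where $F_{\tau_\phi}$ is again bounded away from $0$ — yields a product and quotient of locally Lipschitz functions with denominators bounded away from $0$, hence a locally Lipschitz function of $(x,\alpha)$ on $U''\times(\mathbb{R}^{n*}\backslash 0)$.

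Thus both coefficient families of $\mathcal E$ in the chart $\tau_\phi$ (see (\ref{define-extended})) are locally Lipschitz, so $\mathcal E$ is a locally Lipschitz vector field on $T^*U\backslash 0$. The main technical point is the passage from the control-theoretic description of the coefficients of $\mathcal E$ to expressions built only from $d_v{F_*}^2$, $F_*$ and $\partial F_{\tau_\phi}/\partial x^i$, and in particular the observation that $\partial F_{\tau_\phi}/\partial x^i$ inherits local Lipschitz continuity from $d_hF^2_{\tau_\phi}$ upon dividing by $2F_{\tau_\phi}$; everything else is the stability of the class of locally Lipschitz maps under the relevant algebraic operations and compositions, together with checking that all denominators stay locally bounded away from $0$.
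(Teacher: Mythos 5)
Your argument is correct and follows essentially the same route as the paper: reduce to Theorem~\ref{dF* localmente Lipschitz} (local Lipschitz continuity of $d_v F_*^2$) and then close up with compositions and quotients of locally Lipschitz maps with denominators locally bounded away from zero. The main difference is one of explicitness: where the paper's proof asserts in one sentence that $(x,u)\mapsto X_u(x)$ and $(x,u)\mapsto \partial X_u/\partial x^i$ ``hold due to the hypotheses,'' you actually derive the coefficient formulas $f^i(x,u(x,\alpha))=\bigl[(d_v F_*^2)(x,\alpha)\bigr]^i/\bigl(2F_*(x,\alpha)\bigr)$ (via Proposition~\ref{unico vetor que maximiza o funcional caso estritamente convexo} and Lemma~\ref{igualdadefundamental}) and $\alpha_j\,\partial f^j/\partial x^i=-\alpha(u)\,(\partial F_{\tau_\phi}/\partial x^i)/F_{\tau_\phi}^2$, and check each factor; this is welcome detail but not a different method.
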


\begin{proof}
Consider the vector field
\begin{equation}
\label{hamiltoniano}
{\vec { H}}_u = f_i(x,u) \frac{\partial}{\partial x_i} - \alpha_j \frac{\partial f_j}{\partial x_i} \frac{\partial}{\partial \alpha_i}
\end{equation}

which, in coordinates, is given by the application

\begin{equation}
\label{coordhamiltoniano}
(x,\alpha,u) \mapsto \left(x_1,\ldots,x_n,\alpha_1,\ldots,\alpha_n,f_1(x,u),\ldots,f_n(x,u), \alpha_j \frac{\partial f_j}{\partial x_1}(x,u),\ldots, \alpha_j \frac{\partial f_j}{\partial x_n}(x,u)\right).
\end{equation}
Notice that (\ref{hamiltoniano}) is locally Lipschitz if and only if $f_i(x,u)$ and $\frac{\partial f_j}{\partial x_i}(x,u)$ are locally Lipschitz for $i,j=1,\ldots,n$ or, 
equivalently, if and only if $(x,u)\mapsto X_u(x)$ and $(x,u)\mapsto \frac{\partial X_u}{\partial x_i}(x)$ are locally Lipschitz. These conditions hold due to the hypotheses of this theorem.

The extended geodesic field $\mathcal E(x,\alpha)$ is defined associating to each $(x,\alpha)\in T^*M$ the vectors $\vec {H}_u(x,\alpha)$, where $u(x,\alpha)\in S^{n-1}$ are such that $X_{u(x,\alpha)}(x)$ maximizes $\alpha$. In the case of strictly convex asymmetric norms, this correspondence is unique and $\mathcal E(x, \alpha)$ becomes a vector field. In addition, we are under the conditions of Theorem \ref{dF* localmente Lipschitz}, what implies that $(x,\alpha)\mapsto u(x,\alpha)$ given by $u(x,\alpha)=\Pi\left(\frac{(d_v{F_*}^2)(x,\alpha)}{F(x,{(d_v{F_*}^2)(x,\alpha)})}\right)$ (where $\Pi$ is the radial projection on $S^{n-1}$) is locally Lipschitz. Therefore it follows that

\begin{align*}
(x,\alpha)\mapsto & \left( \frac{}{} x_1,\ldots,x_n,\alpha_1,\ldots,\alpha_n,f_1(x,u(x,\alpha)),\ldots,f_n(x,u(x,\alpha)),\right. \\
& \left. \alpha_j \frac{\partial f_j}{\partial x_1}(x,u(x,\alpha)),\ldots, \alpha_j \frac{\partial f_j}{\partial x_n}(x,u(x,\alpha)) \right)
\end{align*}
is locally Lipschitz, that is, $(x,\alpha)\mapsto \mathcal{E}(x,\alpha)$ is locally Lipschitz.
\end{proof}

\section{Extended geodesic field on homogeneous spaces}
\label{Extended geodesic field on homogeneous spaces}

In this section we will show that homogeneous spaces endowed with $G$-invariant $C^0$-Finsler structures are Pontryagin type $C^0$-Finsler manifolds. Moreover, if the $C^0$-Finsler structure satisfy the strong convexity condition (Definition \ref{definicaofortementeconvexa}) in some tangent space, then $\mathcal E$ is a locally Lipschitz vector field. This section is independent of Sections \ref{Horizontally C1 family of asymmetric norms subsection} and \ref{A Locally Lipschitz Case}.

Let $G$ be a Lie group and $H \subset G$ be a closed subgroup. 
Let $\mathfrak{g}$ and $\mathfrak{h}$ be the lie algebras of $G$ and $H$ respectively and $\mathfrak m$ be a vector subspace of $\mathfrak{g}$ such that $\mathfrak{g}=\mathfrak m\oplus\mathfrak{h}$. 
Let $a:G\times G/H\to G/H$ be the canonical left action, $a_g:=a(g,\cdot) :G/H\to G/H$ and $\pi:G \rightarrow G/H$ be the projection map. 
As usual, we denote the action of $g$ on $g^\prime H \in G/H$ by $gg^\prime H$. 
Consider a neighborhood $U$ of $0\in\mathfrak m$ such that the collection of applications 
\begin{equation}
\label{atlas G/H}
\{\sigma_g: U \to G/H;\sigma_g(x)=g\pi(\exp(x))\}_{g\in G} 
\end{equation}
is an atlas of $G/H$.

Suppose that $(G/H,F)$ is a $G$-invariant $C^0$-Finsler manifold. 
Given $gH\in G/H$, we construct a family of unit vector fields on $g\exp (U)H$ as follows: Consider the unit sphere $S_F[eH,0,1]$ on $(T_{eH}G/H,F)$ and 
$$d((\pi\circ \exp)^{-1})_{eH}(S_F(eH))=S\subset\mathfrak m.$$
Define the family of vector fields 
\begin{align*}
X:& U\times S \to TG/H \\
& (x,u)\mapsto X_u(x)
\end{align*}
by $X_u(x)=d(a_{g\exp(x)})_{eH}(d(\pi\circ \exp)_0(u))$. Since $F$ is $G$-invariant, it follows that $X_u(x)$ is a unit vector for every $(x,u)$.
For the sake of convenience we choose $u \in S$ instead of $u \in S^{n-1}$, but we can identify $S$ to $S^{n-1}$ through a radial Lipschitz map. 

Let's check that $X_u(x)$ satisfies the conditions of Definition \ref{horizontally smooth}.
\begin{enumerate}
\item Since $d(a_{g\exp(x)})_{eH}$ and $d(\pi\circ \exp)_0$ are isomorphisms, it follows that

$$u\mapsto X_u(x)$$
is a homeomorphism from $S$ onto $S_F[g\exp(x)H,0,1]$, for every $x\in U$.
\item Note that the application $(x,u)\mapsto X_u(x)$ gives a natural $C^\infty$ extension from $U\times\mathfrak m$ onto $TG/H$ given by
\begin{align}
\label{extensao de X}
\bar{X}: & U\times \mathfrak m\longrightarrow TG/H \nonumber \\
& (x,u)\mapsto \bar{X}_u(x)=d(a_{g\exp(x)})_{eH}(d(\pi\circ \exp)_0(u))
\end{align}
Therefore, we can conclude that $(x,u)\mapsto X_u(x)$ is continuous.
\item By (\ref{extensao de X}) we can also conclude that
$$(x,u)\mapsto \frac{\partial X_u(x)}{\partial x^i}$$
are continuous with respect to a linear coordinate $(x^1, \ldots, x^n)$ on $U$. 
\end{enumerate}
Therefore the $C^0$-Finsler manifold $(G/H,F)$ is of Pontryagin type.
As a consequence of (\ref{extensao de X}), we also have that
\[
(x,u)\mapsto \vec H_u = f_i(x,u) \frac{\partial}{\partial x_i} -   \alpha_j \frac{\partial f_j}{\partial x_i}(x,u) \frac{\partial}{\partial \alpha_i} 
\]
is locally Lipschitz.

Now suppose there is an asymmetric norm $\check F$ on $\mathfrak m$ such that the asymmetric norm with unit sphere $S$ is strongly convex with respect to $\check F$.
We claim that $(x,\alpha) \mapsto u(x,\alpha)$ is locally Lipschitz, what is enough to prove that the extended geodesic field is locally Lipschitz.

For each $x\in U$, consider the transpose linear operator 
\[
d(a_{g\exp(x)}\circ \pi\circ \exp)^*_{0}:T^*_{g\exp(x)H}G/H\to \mathfrak m^*
\]
and a functional $\alpha_{g\exp(x)H}\in T^*_{g\exp(x)H}G/H$. Due to the equality
\begin{align*}
& \alpha_{g\exp(x)H}(d(a_{g\exp(x)})_{eH}(d(\pi\circ \exp)_0(u))) \\
 = & (d(a_{g\exp(x)}\circ \pi\circ \exp)^*_{0}(\alpha_{g\exp(x)H}))(u), 
\end{align*}
the vector $X_u(x)\in T_{g\exp(x)H}G/H$ maximizes the functional $\alpha_{g\exp(x)H}$ if and only if $u\in S$ maximizes the functional $d(a_{g\exp(x)}\circ \pi\circ \exp)^*_{0}(\alpha_{g\exp(x)H})\in \mathfrak m^*$.
Thus the application $\alpha_{g\exp(x)H}\mapsto u(\alpha_{g\exp(x)H})$, where $u(\alpha_{g\exp(x)H})$ is such that $X_{u(\alpha_{g\exp(x)H})}(x)$ maximizes $\alpha_{g\exp(x)H}$, is given by
\begin{equation}
\label{xi u(xi)}
\alpha_{g\exp(x)H}\mapsto d(a_{g\exp(x)}\circ \pi\circ \exp)^*_{0}(\alpha_{g\exp(x)H})\mapsto u(\alpha_{g\exp(x)H}),
\end{equation}
where $u(\alpha_{g\exp(x)H})$ maximizes $d(a_{g\exp(x)}\circ \pi\circ \exp)^*_{0}(\alpha_{g\exp(x)H})$. Notice that the application $d(a_{g\exp(x)}\circ \pi\circ \exp)^*_{0}(\alpha_{g\exp(x)H})\mapsto u(\alpha_{g\exp(x)H})$ is locally Lipschitz as a consequence of Theorem \ref{diferencialduallipschitz}. Therefore (\ref{xi u(xi)}) is locally Lipschitz and the extended geodesic field given by
\begin{equation*}
(x,\alpha_{g\exp(x)})\mapsto \vec{H}_{u(\alpha_{g\exp(x)})}
\end{equation*}
is also locally Lipschitz. This proves the following theorem.

\begin{theorem}
\label{localmente lipschitz homogeneo}
Let $G$ be a Lie group and $H$ be a closed subgroup of $G$. If $(G/H,F)$ is a $C^0$-Finsler manifold and $F$ is $G$-invariant, then $F$ is of Pontryagin type. Moreover, if $F$ restricted to some tangent space is strongly convex, then the extended geodesic field defined locally by
$$(x,\alpha_{g\exp(x)})\mapsto \vec{H}_{u(\alpha_{g\exp(x)})}$$
is a locally Lipschitz vector field.
\end{theorem}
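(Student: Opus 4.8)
The plan is to prove the two claims in turn. For the first, that $G$-invariance already forces $(G/H,F)$ to be of Pontryagin type, I would fix a base point $gH$, work in the chart $\sigma_g$ of (\ref{atlas G/H}) with $U$ a neighborhood of $0\in\mathfrak m$, and take as control set the unit sphere $S=d(\pi\circ\exp)_0^{-1}(S_F[eH,0,1])\subset\mathfrak m$, identified with $S^{n-1}$ by a radial (hence Lipschitz) homeomorphism. The candidate family is
\[
X_u(x)=d(a_{g\exp(x)})_{eH}\big(d(\pi\circ\exp)_0(u)\big),\qquad x\in U,\ u\in S.
\]
Each $a_{g\exp(x)}$ is an $F$-isometry, so $X_u(x)\in S_F[g\exp(x)H,0,1]$; since $d(a_{g\exp(x)})_{eH}$ and $d(\pi\circ\exp)_0$ are linear isomorphisms carrying $S$ onto $S_F[g\exp(x)H,0,1]$, Condition (1) of Definition \ref{horizontally smooth} holds. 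The formula for $X_u(x)$ is defined and $C^\infty$ for all $(x,u)\in U\times\mathfrak m$ (a composition of the smooth maps $a$, $\pi$, $\exp$, linear in $u$), giving Conditions (2) and (3). Hence $(G/H,F)$ is of Pontryagin type; moreover, read in linear coordinates, the components $f_i(x,u)$ of $X_u$ and all their $x$-derivatives $\partial f_j/\partial x_i$ are $C^\infty$ in $(x,u)$, so the Hamiltonian vector field $\vec H_u$ of (\ref{hamiltoniano}) depends locally Lipschitzly — indeed smoothly — on $(x,u)$.

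For the second claim, once strong convexity is assumed it remains only to prove that $(x,\alpha)\mapsto u(x,\alpha)$ is locally Lipschitz, where $u(x,\alpha)\in S$ is the maximizer of $\alpha$ on $S_F[g\exp(x)H,0,1]$ — a singleton, since strong convexity implies strict convexity, which also makes $\mathcal E$ single-valued. The crux is the transposition identity
\[
\alpha_{g\exp(x)H}\big(d(a_{g\exp(x)})_{eH}(d(\pi\circ\exp)_0(u))\big)=\big[d(a_{g\exp(x)}\circ\pi\circ\exp)_0^{*}(\alpha_{g\exp(x)H})\big](u),
\]
so $u(x,\alpha)$ is the maximizer on $S$ of $\beta(x,\alpha):=d(a_{g\exp(x)}\circ\pi\circ\exp)_0^{*}(\alpha_{g\exp(x)H})\in\mathfrak m^{*}$. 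The map $(x,\alpha)\mapsto\beta(x,\alpha)$ is $C^\infty$ in $x$ and linear in $\alpha$, hence locally Lipschitz, and never vanishes on $T^{*}M\backslash 0$ because the transpose of an isomorphism is an isomorphism.

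It then suffices to show that $\beta\mapsto(\text{maximizer of }\beta\text{ on }S)$ is locally Lipschitz on $\mathfrak m^{*}\backslash 0$. To this end I would transfer the strong-convexity hypothesis to the asymmetric norm $F_0$ on $\mathfrak m$ whose unit sphere is $S$, namely the pullback of $F(eH,\cdot)$ under $d(\pi\circ\exp)_0$; if $F$ is strongly convex at a tangent space $T_{g_0H}$ other than $T_{eH}$, one pulls back through the $F$-isometry $d(a_{g_0^{-1}})$ and invokes Remark \ref{fortemente convexo nao depende f til}. Since $F_0$ is strongly convex, Proposition \ref{unico vetor que maximiza o funcional caso estritamente convexo} identifies the maximizer with $d{F_0}_{*}^{2}(\beta)/F_0\big(d{F_0}_{*}^{2}(\beta)\big)$, which by Lemma \ref{igualdadefundamental} equals $d{F_0}_{*}^{2}(\beta)/\big(2{F_0}_{*}(\beta)\big)$ — a ratio of the Lipschitz map $d{F_0}_{*}^{2}$ (Theorem \ref{diferencialduallipschitz}) and the asymmetric norm ${F_0}_{*}$, hence locally Lipschitz away from $0$. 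Composing, $(x,\alpha)\mapsto u(x,\alpha)$ is locally Lipschitz; substituting it into the expression (\ref{coordhamiltoniano}) for $\vec H_u$, which is locally Lipschitz in $(x,u)$, and using that the factors $\alpha_j$ are smooth, yields that $\mathcal E(x,\alpha)=\vec H_{u(x,\alpha)}(x,\alpha)$ is a locally Lipschitz vector field.

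I expect no single step to be genuinely deep — everything funnels into results already established in Sections \ref{asymmetric norms}. The main obstacle is bookkeeping: keeping straight the chain of linear isomorphisms $\mathfrak m\to T_{eH}G/H\to T_{g\exp(x)H}G/H$ and their transposes, transferring strong convexity cleanly from a tangent space to $\mathfrak m$, and checking that the radial identification $S\cong S^{n-1}$ and the normalization by $F_0(d{F_0}_{*}^{2}(\beta))$ preserve the Lipschitz estimates.
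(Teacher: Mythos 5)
Your proposal follows essentially the same route as the paper's own proof: same chart $\sigma_g$, same family $X_u(x)=d(a_{g\exp(x)})_{eH}(d(\pi\circ\exp)_0(u))$ verified against Definition~\ref{horizontally smooth}, same transposition identity reducing the maximization to a functional on $\mathfrak m^*$, and the Lipschitz property of $u(x,\alpha)$ funneled through Theorem~\ref{diferencialduallipschitz}. You merely make explicit two points the paper leaves implicit — writing the maximizer as $d{F_0}_{*}^{2}(\beta)/\bigl(2{F_0}_{*}(\beta)\bigr)$ via Proposition~\ref{unico vetor que maximiza o funcional caso estritamente convexo} and Lemma~\ref{igualdadefundamental}, and transferring strong convexity from an arbitrary tangent space back to $\mathfrak m$ via the isometry $d(a_{g_0^{-1}})$ and Remark~\ref{fortemente convexo nao depende f til} — which tightens rather than alters the argument.
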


\section{Examples}
\label{Exemplos}

A quasi-hyperbolic plane is the Lie group $G=\mathbb R^2_+ = \{(x^1, x^2)\in \mathbb R^2; x^2>0\}$ with identity element $(0,1)$ and product $(x^1,x^2)\cdot (z^1, z^2) = (x^2 z^1 + x^1,x^2 z^2)$ endowed with a left invariant symmetric $C^0$-Finsler structure $F$, that is, $F(x, \cdot)$ is a norm for every $x$. Notice that the left multiplication by $(x^1,x^2)$ is equivalent to a homothety by $x^2$ followed by a horizontal translation by $x^1$.
From Theorem \ref{localmente lipschitz homogeneo}, it follows that quasi-hyperbolic planes are Pontryagin type $C^0$-Finsler manifolds.
In \cite{Gribanova}, Gribanova proves that the left invariant symmetric $C^0$-Finsler structures on $G$ are given by 
\begin{equation}
\label{caracteriza plano qh}
F(x^1,x^2,y^1,y^2) = \frac{F(0,1,y^1, y^2)}{x^2}:=\frac{F_{e}(y^1,y^2)}{x^2},
\end{equation}
where $F_e$ is an arbitrary norm on the Lie algebra $\mathfrak{g}$.
In addition, she classifies all minimizing paths of $(G,F)$.
She uses the symmetries of $G$ and the PMP in order to calculate the minimizing paths explicitly.

In this chapter, we study particular examples of quasi-hyperbolic planes according to the concept of extended geodesic field. 
In the first example, we consider a non-strictly convex structure and in the second example a strongly convex one. 
In the first example we analyze how $\mathcal E$ allow us to calculate geodesics even when $\mathcal E$ isn't a vector field.
In the second example, $\mathcal E$ is a locally Lipschitz vector field and the local existence and uniqueness of integral curves of $\mathcal E$ holds.
Although it seems to be a similar to the Finsler case, we show that this kind of geodesic structure can't be represented by a locally Lipschitz vector field on $TM$.

Let us present some calculations that hold for every quasi-hyperbolic plane. 
The unit vector fields in terms of the control $(u^1,u^2)$ are given by
\begin{eqnarray}
f^1(x,u)\frac{\partial}{\partial x^1} + f^2(x,u)\frac{\partial}{\partial x^2} & = & \frac{u^1 x^2}{F_e(u^1,u^2)}\frac{\partial}{\partial x^1} + \frac{u^2 x^2}{F_e(u^1,u^2)}\frac{\partial}{\partial x^2}. \label{unit vector field}
\end{eqnarray}
The Hamiltonian is given by
\begin{eqnarray}
H_u(x,\alpha) = \frac{x^2}{F_e(u^1, u^2)}\left( \alpha_1 u^1 + \alpha_2 u^2 \right), \label{hamiltonian constant}
\end{eqnarray}
which is a constant $C_0>0$ along the minimizing paths.
Denote by $u(x,\alpha)$ the subset of controls that maximizes $H_u(x,\alpha)$. 
Notice that the vector
\[
\frac{u^1(x,\alpha) x^2}{F_e(u(x,\alpha))}\frac{\partial}{\partial x^1} + \frac{u^2(x,\alpha) x^2}{F_e(u(x,\alpha))}\frac{\partial}{\partial x^2} \in S_F[x,0,1]
\]  
that maximizes $H_u(x,\alpha)$ is proportional to $u$ and it doesn't depend on $x$. 
Therefore we can write $u(\alpha)=u(x,\alpha)$.
This remark will help us to do the analysis of the examples of this chapter.

The Hamiltonian vector field is given by
\begin{equation}
\label{equacao Hamilton geral}
\vec H_u(x,\alpha) = \left\{ 
\begin{array}{rcl}
\frac{dx^1}{dt} & = & \frac{x^2 u^1}{F_e(u)}; \\
\frac{dx^2}{dt} & = & \frac{x^2 u^2}{F_e(u)}; \\
\frac{d\alpha_1}{dt} & = & 0; \\
\frac{d\alpha_2}{dt} & = & - \frac{\alpha_i u^i}{F_e(u)},
\end{array}
\right.
\end{equation}
the extended geodesic field is given by
\begin{equation}
\label{equacao Hamilton estendido}
\mathcal E(x,\alpha) = \left\{ 
\begin{array}{rcl}
\frac{dx^1}{dt} & = & \frac{x^2 u^1(\alpha)}{F_e(u(\alpha))}; \\
\frac{dx^2}{dt} & = & \frac{x^2 u^2(\alpha)}{F_e(u(\alpha))}; \\
\frac{d\alpha_1}{dt} & = & 0; \\
\frac{d\alpha_2}{dt} & = & - \frac{\alpha_i u^i(\alpha)}{F_e(u (\alpha))},
\end{array}
\right.
\end{equation}
where (\ref{equacao Hamilton geral}) and (\ref{equacao Hamilton estendido}) are due to (\ref{campohamiltonianocoordenadas2}), (\ref{unit vector field}), (\ref{hamiltonian constant}) and the definition of $\mathcal E$.
The key feature here is that if $(u(t),x(t),\alpha(t))$ is an integral curve of $\mathcal E$, then
\begin{equation}
\label{equacao Hamilton}
\left\{ 
\begin{array}{rcl}
\frac{dx^1}{dt} & = & \frac{x^2(t) u^1(t)}{F_e(u(t))}; \\
\frac{dx^2}{dt} & = & \frac{x^2(t) u^2(t)}{F_e(u(t))}; \\
\frac{d\alpha_1}{dt} & = & 0; \\
\frac{d\alpha_2}{dt} & = & - \frac{C_0}{x^2(t)}
\end{array}
\right.
\end{equation}
holds, where the last equation is due to the constancy of (\ref{hamiltonian constant}) along the curve. 
Here $u(t) = u(\alpha(t))$ must maximize $H_u(x(t),\alpha(t))$ in order to have the PMP satisfied.

Now we go to the analysis of each case separately.

\subsection{A non-strictly convex case}
\label{A non-strictly convex case}

In this example suppose that $F_e$ in (\ref{caracteriza plano qh}) is a norm such that its unit sphere is a regular hexagon $H$ with vertices at 
\[
\{ \pm (0,1), \pm (\sqrt{3}/2,1/2), \pm (\sqrt{3}/2, -1/2)\}.
\]
The direction of these vertices are called preferred directions (see \cite{Fukuoka-large-family}).
If $(\alpha_1, \alpha_2)$ is a positive multiple of $(1,0)$, then $(u^1,u^2) \in u(\alpha)$ iff the Euclidean angle between $(u^1,u^2)$ and $(1,0)$ is in the interval $[-\pi/6, \pi/6]$.
The same type of analysis holds when $(\alpha_1, \alpha_2)$ is a positive multiple of $\{(-1,0), \pm (1/2, \sqrt{3}/2),$ $\pm (-1/2,\sqrt{3}/2)\}$.
On the other hand if the Euclidean angle between $(1,0)$ and $(\alpha_1,\alpha_2)$ is in the interval $(0, \pi/3)$, then $u(\alpha) = (\sqrt{3}/2, 1/2)$, which is a vertex of $H$.
More in general, if the Euclidean angle between $(1,0)$ and $(\alpha_1, \alpha_2)$ is in the interval $(k\pi/3,(k+1)\pi/3)$, $k\in \{1,2,3,4,5\}$, then $u(\alpha)$ is the vector that makes an Euclidean angle $k\pi /3+\pi / 6$ with $(1,0)$, which are also vertices of $H$. The expression (\ref{equacao Hamilton estendido}) states that $\mathcal E(x,\alpha)$ isn't a vector field on $T^\ast M$ because $\mathcal E(x,\alpha)$ can have more that one element for a fixed $(x,\alpha)$.

Let us calculate the solutions of (\ref{equacao Hamilton}) for this particular $F_e$.
The function $\alpha_1$ is constant and there are two cases to consider:

\

First case: $\alpha_1(t) \equiv 0$.

\

Suppose that $\alpha_2(0) > 0$.
We have that $C_0=x^2(0).\alpha_2(0)$.
It is not difficult to see that the unique minimizing path is the projection $(x^1(t),x^2(t))$ of the solution $(x^1(t),x^2(t), \alpha_1(t), \alpha_2(t)) = (x^1(0),x^2(0)e^t, 0, \alpha_2(0)e^{-t})$, where $t \in \mathbb R$.

The analysis for $\alpha_2(0)<0$ is analogous.
The unique minimizing geodesic satisfying the initial conditions $(x^1(0),x^2(0),\alpha_1(0),\alpha_2(0))$ is given by $(x^1(0),$ $x^2(0)e^{-t})$, $t\in \mathbb R$.

In both cases the minimizing path is a vertical line in $G$.

\

Second case: $\alpha_1(t) \equiv \alpha_1(0) \neq 0$.

\

In order to fix ideas, suppose that $\alpha_1(t)  > 0$.

\

First subcase:  $u(\alpha(0))=(0,1)$.

\

The analysis follows in the same way as in the first case.
The solution is given by $(x^1(t),x^2(t),\alpha_1(t),$ $\alpha_2(t))=(x^1(0),x^2(0)e^t, \alpha_1(0),\alpha_2(0)e^{-t})$ and this solution holds in the maximum interval 
\[
t \in I_1 := \left(-\infty, \ln \left( \frac{\alpha_2(0)}{ \sqrt{3}.\alpha_1(0)} \right) \right).
\]
At $t_1 = \ln (\frac{\alpha_2(0)}{ \sqrt{3}.\alpha_1(0)})$, we have that $\alpha(t_1)=(\alpha_1(0),\sqrt{3}.\alpha_1(0))$ and $u(\alpha(t_1))$ is given by the vectors $u$ such that the Euclidean angle between $(1,0)$ and $u$ is in the interval $[\pi/6,\pi/2]$.
A little bit after this point, we have that $u(\alpha(t)) = (\sqrt{3}/2, 1/2)$ due to the fourth equation of (\ref{equacao Hamilton}).
The closure of the trace of $(x^1(t),x^2(t))$ for $t\in I_1$ has Euclidean length 
\[
\frac{C_0}{\sqrt{3}.\alpha_1(0)} = \frac{ x^2(0)\alpha_2(0)}{\sqrt{3}.\alpha_1(0)}.
\]

\

Second subcase: $u(\alpha(0)) = (\sqrt{3}/2, 1/2)$.

\

We have that
\[
C_0 = x^2(0)\left( \alpha_1(0)\frac{\sqrt{3}}{2}  + \alpha_2(0)\frac{1}{2}\right).
\]
If the initial conditions $(x^1(0),x^2(0), \alpha_1(0),\alpha_2(0))$ are given, then
\begin{eqnarray}
\label{hamilton2}
\left\{
\begin{array}{lll}
x^1(t) & = & \left( x^1(0) - \sqrt{3}x^2(0) \right) + \sqrt{3}x^2(0) e^{t/2}; \\
x^2(t) & = & x^2(0)e^{t/2}; \\
\alpha_1(t) & = & \alpha_1(0); \\
\alpha_2(t) & = & (\sqrt{3}.\alpha_1(0) + \alpha_2(0))e^{-t/2}-  \sqrt{3}.\alpha_1(0).
\end{array}
\right.
\end{eqnarray}
Let $t_2$ and $t_3$ such that $\alpha(t_2)=(\alpha_1(0),\sqrt{3}.\alpha_1(0))$ and $\alpha(t_3)=(\alpha_1(0),0)$, which are the boundary of the maximum interval $(t_2,t_3)$ such that $u(\alpha(t))=(\sqrt{3}/2,1/2)$.
From the fourth equation of (\ref{hamilton2}), we get 
\begin{equation}
\label{t2}
e^{\frac{t_2}{2}} = \frac{\sqrt{3}.\alpha_1(0) + \alpha_2(0)}{2 \sqrt{3}.\alpha_1(0)} = \frac{C_0}{\sqrt{3}.x^2(0).\alpha_1(0)}
\end{equation}
and
\begin{equation}
\label{t3}
e^{\frac{t_3}{2}} = \frac{\sqrt{3}.\alpha_1(0) + \alpha_2(0)}{\sqrt{3}.\alpha_1(0)} = \frac{2.C_0}{\sqrt{3}.x^2(0).\alpha_1(0)},
\end{equation}
what implies
\[
x^2(t_2)=\frac{C_0}{\sqrt{3}.\alpha_1(0)}
\]
and
\[
x^2(t_3) = \frac{2C_0}{\sqrt{3}.\alpha_1(0)}.
\]
From (\ref{t2}) and (\ref{t3}) we can see that $t_2$ and $t_3$ are finite.
Therefore the trace of $(x^1(t),x^2(t))$ restricted to $[t_2,t_3]$ is a line segment parallel to $(\sqrt{3}/2,1/2)$ and with Euclidean length $2C_0/(\sqrt{3}.\alpha_1(0))$. 
When we go a little bit before $t_2$, we have that $u(\alpha(t))=(0,1)$ and if we go a little bit after $t_3$, we have that $u(\alpha(t))=(\sqrt{3}/2, -1/2)$.

\

Third subcase: $u(\alpha(0)) = (\sqrt{3}/2,-1/2)$.

\

This subcase is analogous to the subcase $u(\alpha(0)) = (\sqrt{3}/2, 1/2)$.
If $t_4$ and $t_5$ are the boundary of the maximum interval such that $u(\alpha(t)) = (\sqrt{3}/2,-1/2)$, then the trace of $(x^1(t),x^2(t))$ restricted to $[t_4,t_5]$ is a line segment parallel to $(\sqrt{3}/2, -1/2)$ with Euclidean length $2C_0/(\sqrt{3}.\alpha_1(0))$ such that the $x^2$ coordinate of its lowest point is given by $x^2(t_5) = C_0/(\sqrt{3}.\alpha_1(0))$ and the $x^2$ coordinate of its highest point is given by $x^2(t^4) = 2C_0/(\sqrt{3}.\alpha_1(0))$. 
When we go a little bit before $t_4$, we have that $u(\alpha(t))=(\sqrt{3}/2,1/2)$ and if we go a little bit after $t_5$, we have that $u(\alpha(t))=(0, -1)$.

\

Fourth subcase: $u(\alpha(0)) = (0,-1)$.

\

This subcase is analogous to the subcase
$u(\alpha(0)) = (0,1)$.
The projection of the solution of (\ref{equacao Hamilton}) on $G$ is given by $(x^1(t),x^2(t))=(x^1(0), x^2(0)e^{-t})$, it is defined on the maximal interval $(\ln(- \frac{ \sqrt{3}.\alpha_1(0)}{\alpha_2(0)}), \infty)$ and the closure of its trace is the vertical line connecting $(x^1(0),0)$ and $(x^1(0),C_0/(\sqrt{3}.\alpha_1(0))$.
If we go a little bit before $\ln(-\alpha_2(0)/(\sqrt{3}.\alpha_1(0)))$, then $u(\alpha(t))=(\sqrt{3}/2,-1/2)$. 

\

Fifth subcase: $\alpha(0) = (1,0)$. 

\

For $t$ a little bit before $t=0$, we have that $u(\alpha(t))=(\sqrt{3}/2,1/2)$.
For $t$ a little bit after $t=0$, we have that $u(\alpha(t)) = (\sqrt{3/2},-1/2)$.
Therefore $(x^1(0),x^2(0))$ connects a segment of the second subcase with a segment of the third subcase.  

\

Sixth subcase: $\alpha(0) \in \{(1/2, \sqrt{3}/2), (1/2,-\sqrt{3}/2)\}$.

\

It is analogous to the fifth subcase.
The point $(x^1(0),x^2(0))$ connects two sides of the former subcases.

\

The analysis for $\alpha_1(0)<0$ is analogous.

\

From the analysis made before, the solutions above can be connected and they are part of the maximal solution $(x^1(t),x^2(t),\alpha_1(t),\alpha_2(t))$ of (\ref{equacao Hamilton}) defined for every $t\in \mathbb R$. 
The closure of the trace of $(x^1(t),x^2(t))$ is the intersection of $G$ with a regular hexagon with length $2C_0/(\sqrt{3}.\alpha_1(0))$, centered at some point in the $x$-axis and with its sides parallel to the vectors in $\{ (0,1),$ $(\sqrt{3}/2,1/2),$ $(\sqrt{3}/2, -1/2)\}$.
Observe that $C_0$ is constant along the solution.

We leave for the reader the following exercise: If we have two points in $G$ that aren't in the same vertical, then there exists a unique half hexagon (as calculated above) that connects these two points. 

Quasi-hyperbolic planes are complete locally compact length spaces because they are symmetric $C^0$-Finsler manifolds, what implies that every two points can be connected by a minimizing path (see \cite{Burago}). 
Therefore these half hexagons are minimizing paths.
 
\begin{remark}
\label{conclusoes 1}
Here we can draw some conclusions:
\begin{enumerate}
\item Given $(x_0,y_0) \in TG$, if $y_0$ isn't a preferred direction, then there aren't any geodesic $\gamma:(-\epsilon,\epsilon) \rightarrow G$ such that $\gamma(0)=x_0$ and $\gamma^\prime(0) = y_0$.
If $y_0$ is a preferred direction, then there are infinitely many minimizing paths $\gamma:\mathbb R \rightarrow G$ such that $\gamma(0) = x_0$ and $\gamma^\prime(0) = y_0$.
Therefore if $(\gamma(t),\gamma^\prime(t))$ is an absolutely continuous path in $TM$ such that $\gamma(t)$ is a minimizing path, then the trace of $\gamma$ is a subset of a straight line.
This makes any representation of geodesics on $TM$ much more complicated than $\mathcal E$;

\item Every minimizing path is a preferred path, that is, their derivatives are preferred directions almost everywhere. 
If $x_0 \in G$, then the choice of $\alpha_0 \in T_{x_0}^\ast M$ determines the preferred  direction that the geodesic will follow and the exact moment when its direction ``switches'';

\end{enumerate}
\end{remark}

\subsection{A strongly convex example}
\label{A strongly convex example}
\noindent Now we analyze an example where the Finsler structure is strongly convex.

Let us define a norm $F$ on $\mathfrak{g}$ as follows.
Consider on $\mathfrak{g}$ the Euclidean spheres $S(1,1)$, $S(-1,1)$, $S(-1,-1)$ and $S(1,-1)$ with radius $\sqrt{5}$ and centered on $(1,1)$, $(-1,1)$, $(-1,-1)$ and $(1,-1)$ respectively.
Let $S_e$ be the figure formed by the arcs of:
\begin{itemize}
\item $S_{(-1,-1)}$ intercepted with the first quadrant of $\mathfrak{g}$;
\item $S_{(1,-1)}$ intercepted with the second quadrant of $\mathfrak{g}$;
\item $S_{(1,1)}$ intercepted with the third quadrant of $\mathfrak{g}$ and
\item $S_{(-1,1)}$ intercepted with the fourth quadrant of $\mathfrak{g}$
\end{itemize}
Define $F$ as the left-invariant $C^0$-Finsler structure whose unit sphere on $\mathfrak{g}$ is $S_e$. 
The shape of $S_e=S_F[e,0,1]$ can be seen in Figure \ref{Se}.
We will prove some results before we explain the strong convexity of $F(e,\cdot)$.

\begin{figure}[h]
\centering
\includegraphics[scale=0.7]{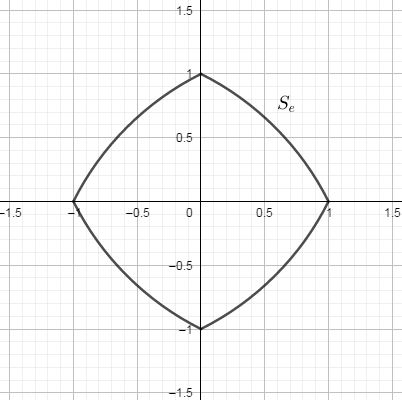}
\caption{The sphere $S_e=S_F[e,0,1]$.}
\label{Se}
\end{figure}

\begin{proposition}
\label{produto interno fortemente convexo em relacao a ele mesmo}
Let $\langle \cdot,\cdot \rangle$ be an inner product on a finite dimensional real vector space $V$. Then the norm $\check F (z):=\sqrt{\langle z,z \rangle}$  is strongly convex with respect to itself.
\end{proposition}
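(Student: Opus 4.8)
The plan is to observe that $\check F^2 = \langle \cdot,\cdot\rangle$ is a differentiable convex function, compute its (single-valued) subdifferential explicitly, and then verify the inequality of Definition \ref{definicaofortementeconvexa} by a direct expansion; in fact the inequality will hold with equality, which makes the computation very short.

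First I would identify $\partial \check F^2(y)$. For $v\in V$ one has $\check F^2(y+v)=\langle y+v,y+v\rangle = \langle y,y\rangle + 2\langle y,v\rangle + \langle v,v\rangle$, so $\check F^2$ is differentiable at every $y$ with differential the functional $v\mapsto 2\langle y,v\rangle$, and the second-order term $\langle v,v\rangle\ge 0$ already shows $\check F^2$ is convex. Since a convex function which is differentiable at a point has subdifferential at that point equal to the singleton consisting of its differential, we get $\partial \check F^2(y)=\{\,2\langle y,\cdot\rangle\,\}$. Hence, to establish strong convexity with respect to $\check F$, it suffices to check the required inequality for the single functional $\alpha=2\langle y,\cdot\rangle$.

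Next I would simply substitute this $\alpha$ into the right-hand side of the inequality in Definition \ref{definicaofortementeconvexa}, obtaining $\check F^2(y)+\alpha(z-y)+\check F^2(z-y)=\langle y,y\rangle + 2\langle y,z-y\rangle + \langle z-y,z-y\rangle$. Expanding $2\langle y,z-y\rangle = 2\langle y,z\rangle - 2\langle y,y\rangle$ and $\langle z-y,z-y\rangle = \langle z,z\rangle - 2\langle z,y\rangle + \langle y,y\rangle$ and summing, all terms except $\langle z,z\rangle$ cancel, so the right-hand side equals $\langle z,z\rangle = \check F^2(z)$. Therefore $\check F^2(z) \ge \check F^2(y)+\alpha(z-y)+\check F^2(z-y)$ holds (indeed as an equality) for all $y,z\in V$ and all $\alpha\in\partial\check F^2(y)$, which is precisely the assertion.

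I do not expect any genuine obstacle here: the only point that must be stated with a little care is that the subdifferential of the differentiable convex function $\check F^2$ at each point is a singleton, so that it is enough to test the inequality against the differential $2\langle y,\cdot\rangle$ rather than against an a priori larger set of supporting functionals.
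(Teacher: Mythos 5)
Your proposal is correct and matches the paper's argument: both identify $\alpha=2\langle y,\cdot\rangle$ as the unique supporting functional and verify by direct expansion that the inequality of Definition \ref{definicaofortementeconvexa} holds with equality. You are slightly more explicit than the paper in justifying that the subdifferential is a singleton, but the underlying computation is the same.
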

\begin{proof}
The equation
\begin{equation*}
\check F^2(z)=\check F^2(y)+\alpha(z-y)+\check F^2(z-y),
\end{equation*}
is a direct consequence of the properties of a inner product and the fact that $\alpha(\cdot)=2\langle y,\cdot\rangle$.
\end{proof}

\begin{corollary}
A Riemannian metric is strongly convex with respect to itself.
\end{corollary}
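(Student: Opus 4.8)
The plan is to deduce this directly from Proposition \ref{produto interno fortemente convexo em relacao a ele mesmo} by applying it fiberwise. First I would recall that a Riemannian metric $g$ on $M$ canonically determines the $C^0$-Finsler structure (in fact, a Finsler structure) $F(x,y) = \sqrt{g_x(y,y)}$, where $g_x$ denotes the inner product on $T_xM$. Taking $\check F := F$, which is itself a Finsler structure, the statement to be checked is the inequality of Definition \ref{fortemente convexa variedades}: for every $x \in M$, every $y,z \in T_xM$, and every $\alpha \in \partial F^2(x,y)$,
\[
F^2(x,z) \geq F^2(x,y) + \alpha(z-y) + F^2(x,z-y).
\]

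Next I would fix $x \in M$ and apply Proposition \ref{produto interno fortemente convexo em relacao a ele mesmo} to the vector space $V = T_xM$ equipped with the inner product $\langle \cdot,\cdot\rangle := g_x$. Since $F(x,\cdot) = \sqrt{g_x(\cdot,\cdot)}$ coincides with the norm $\check F$ appearing in that proposition, it yields the stronger statement that
\[
F^2(x,z) = F^2(x,y) + \alpha(z-y) + F^2(x,z-y)
\]
holds with $\alpha(\cdot) = 2 g_x(y,\cdot)$. Because $F^2(x,\cdot) = g_x(\cdot,\cdot)$ is a positive definite quadratic form, it is differentiable everywhere, so its subdifferential at $y$ is the singleton $\partial F^2(x,y) = \{\, 2 g_x(y,\cdot)\,\}$; hence the displayed equality — and a fortiori the required inequality — holds for the unique element of $\partial F^2(x,y)$.

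Finally, since $x \in M$ was arbitrary, the condition of Definition \ref{fortemente convexa variedades} is satisfied with $\check F = F$, which proves that the Riemannian metric is strongly convex with respect to itself. There is essentially no obstacle here; the only point requiring a word of care is the identification of $\partial F^2(x,y)$ with the singleton $\{2 g_x(y,\cdot)\}$, which guarantees that the pointwise equality furnished by Proposition \ref{produto interno fortemente convexo em relacao a ele mesmo} exhausts all the cases appearing in Definition \ref{fortemente convexa variedades}.
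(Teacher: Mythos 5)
Your proof is correct and follows essentially the same route as the paper, which simply cites Proposition \ref{produto interno fortemente convexo em relacao a ele mesmo} as a direct consequence; you merely spell out the fiberwise application and the observation that $\partial F^2(x,y)=\{2g_x(y,\cdot)\}$ (since $F^2(x,\cdot)$ is a differentiable quadratic form), which is exactly the detail needed to see that the proposition covers all $\alpha$ appearing in Definition \ref{fortemente convexa variedades}.
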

\begin{proof}
It is a direct consequence of Proposition \ref{produto interno fortemente convexo em relacao a ele mesmo}.
\end{proof}
\begin{proposition}
\label{fortemente convexo geometrico}
Let $F$ be a norm on a finite dimensional real vector space $V$. 
Suppose that for every $y \in S_F[0,1]$ and $\alpha \in \partial F^2(y)$ there exists a Euclidean inner product $\left< \cdot, \cdot \right>_{y,\alpha}$  on $V$ such that 
\begin{enumerate}
\item $y \in S_{\Vert \cdot \Vert_{y,\alpha}}[0,1]$, where $\Vert \cdot \Vert_{y,\alpha}$ stands for the norm correspondent to $\left< \cdot , \cdot \right>_{y,\alpha}$;
\item $F \geq \Vert \cdot\Vert_{y,\alpha}$;
\item $\ker \partial \alpha (y)$ is parallel to $T_y S_{\Vert \cdot \Vert_{y,\alpha}}[0,1]$;
\item there exists a norm $F_{\min}$ on $V$ such that $F_{\min} \leq \Vert \cdot \Vert_{y,\alpha}$ for every $(y,\alpha)$.
\end{enumerate}
Then $F$ is strongly convex with respect to $F_{\min}$.
\end{proposition}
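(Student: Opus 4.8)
The plan is to reduce the inequality in Definition \ref{definicaofortementeconvexa} to the case $y\in S_F[0,1]$ and, for each such $y$ and each $\alpha\in\partial F^2(y)$, to compare $F$ with the auxiliary Euclidean norm $\|\cdot\|_{y,\alpha}$ via the exact polarization identity. First I would note that $F^2$, being convex and positively homogeneous of degree $2$, satisfies $\partial F^2(0)=\{0\}$ and $\partial F^2(y)=F(y)\,\partial F^2\!\big(y/F(y)\big)$ for $y\neq0$; a direct substitution $z\mapsto F(y)\,z$ then shows that the strong convexity inequality at $y$ is equivalent to the one at $y/F(y)$ with the rescaled functional and the same $F_{\min}$, while at $y=0$ it reduces to $F^2\geq F_{\min}^2$, which follows from Conditions (2) and (4) applied to any admissible pair (one exists since $S_F[0,1]\neq\emptyset$ and subdifferentials of finite convex functions are nonempty). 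So it suffices to treat $y\in S_F[0,1]$.

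Fix such a $y$ and an $\alpha\in\partial F^2(y)$, and abbreviate $G:=\|\cdot\|_{y,\alpha}$ and $\langle\cdot,\cdot\rangle:=\langle\cdot,\cdot\rangle_{y,\alpha}$. The crucial step is to identify $\alpha$ exactly as $2\langle y,\cdot\rangle$. Homogeneity of $F^2$ forces $\alpha(y)=2F^2(y)=2$ (insert $z=ty$ into the subgradient inequality and let $t\to1$ from each side). The subgradient inequality together with $F^2(y)=1$ also shows $B_F[0,1]\subset\{w:\alpha(w)\leq2\}$, so $\{\alpha=2\}=y+\ker\alpha$ is a supporting hyperplane of $B_F[0,1]$ at $y$; likewise $\{w:\langle y,w\rangle=1\}$ is the supporting hyperplane of the Euclidean ball $B_G[0,1]$ at $y$, since $G(y)=1$ by Condition (1). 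Condition (3) identifies $\ker\alpha$ with $T_yS_G[0,1]=\{v:\langle y,v\rangle=0\}$, so these two affine hyperplanes coincide; hence $\alpha$ is a multiple of $\langle y,\cdot\rangle$, and evaluating at $y$ (using $\alpha(y)=2=2\langle y,y\rangle$) gives $\alpha=2\langle y,\cdot\rangle$.

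With this identification, $\alpha$ is the differential of $G^2$ at $y$, so Proposition \ref{produto interno fortemente convexo em relacao a ele mesmo} applied to $G$ (equivalently, the polarization identity for $\langle\cdot,\cdot\rangle$) yields
\[
G^2(z)=G^2(y)+\alpha(z-y)+G^2(z-y)=F^2(y)+\alpha(z-y)+G^2(z-y)
\]
for every $z\in V$, where the last equality uses $G^2(y)=1=F^2(y)$. Now Condition (2) gives $F^2(z)\geq G^2(z)$ and Condition (4) gives $G^2(z-y)\geq F_{\min}^2(z-y)$, and substituting into the identity yields
\[
F^2(z)\geq F^2(y)+\alpha(z-y)+F_{\min}^2(z-y),
\]
which is exactly strong convexity of $F$ with respect to $F_{\min}$.

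\emph{The step I expect to be the main obstacle} is the middle one, the exact identification $\alpha=2\langle y,\cdot\rangle$: one must be careful that when $F$ is not differentiable at $y$ the set $\partial F^2(y)$ may contain several functionals, so $\langle\cdot,\cdot\rangle_{y,\alpha}$ genuinely depends on the choice of $\alpha$, and that Condition (3) must be exploited through supporting hyperplanes of the convex bodies $B_F[0,1]$ and $B_G[0,1]$ (which always exist) rather than through tangent planes of the possibly nonsmooth sphere $S_F[0,1]$. The remaining ingredients — the homogeneity reduction, the polarization identity, and the two monotonicity comparisons — are routine.
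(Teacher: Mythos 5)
Your proof is correct and takes essentially the same approach as the paper's: reduce to $y\in S_F[0,1]$ by homogeneity, use Condition (3) together with the value $\alpha(y)=2$ to identify $\alpha$ with $d\Vert\cdot\Vert^2_{y,\alpha}|_y = 2\langle y,\cdot\rangle_{y,\alpha}$, apply Proposition \ref{produto interno fortemente convexo em relacao a ele mesmo} to the Euclidean norm, and then compare using Conditions (1), (2) and (4). The only variation is the sub-step that pins down $\alpha(y)=2$: you obtain it from one-sided limits along $z=ty$ in the subgradient inequality, whereas the paper observes that $t\mapsto F^2(y+ty)$ and $t\mapsto\check F^2(y+ty)$ are quadratics agreeing at three points and then differentiates at $t=0$; both are elementary and equivalent.
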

\begin{proof}
We must prove that
\begin{equation}
\label{convexidade forte criterio}
F^2(z)-F^2(y)\geq \alpha(z-y) + F_{\min}^2(z-y)
\end{equation}
for every $y,z \in V$ and $\alpha \in \partial F^2(y)$.

If $y=0$, then $\alpha = 0$ is the unique point in $\partial F^2(y)$ and (\ref{convexidade forte criterio}) holds trivially.

Let us analyze the case $y \in S_F[0,1]$. 
Consider $\alpha \in \partial F^2 (y)$ and denote $\check F=\Vert \cdot \Vert_{y,\alpha}$. Notice that
\begin{align}
F^2(z)-F^2(y) & \geq \check F^2(z) - \check F^2(y) \geq \left<2y, z-y\right>_{x,\alpha} + \check F^2 ( z -y) \nonumber \\ 
& = d\check F_y(z-y) + \check F^2 ( z -y) \label{inequacao1convexidadeforte}
\end{align}
holds due to Proposition \ref{produto interno fortemente convexo em relacao a ele mesmo} and Items 1 and 2. 

Let us show that $\alpha = d\check F^2_y$.
The functionals $\alpha$ and $d\check F^2_y$ share the same kernel due to Item 3. Notice that $y$ is transversal to $\ker\alpha$ and 
\[
t \mapsto F^2(y+ty) \text{ and } t \mapsto \check F^2(y+ty) 
\]
are both quadratic functions that coincides at $t=\{-2,-1,0\}$, what implies that $F^2(y+ty) = \check F^2(y+ty)$ for every $t\in \mathbb R$. Then
\[
\alpha (y) = \lim_{t\rightarrow 0} \frac{F^2(y+ty) - F^2(y)}{t} = \lim_{t\rightarrow 0} \frac{\check F^2(y+ty) - \check F^2(y)}{t} = (d\check F^2)_y (y),
\]
and $\alpha = d\check F^2_y$ because they coincides in $\ker \alpha$ and $\spann \{y\}$.
Hence
\[
F^2(z)-F^2(y)\geq \alpha(z-y) + \check F^2 ( z -y) \geq \alpha(z-y) + F_{\min}^2(z-y)
\]
due to (\ref{inequacao1convexidadeforte}) and Item 4. 

For an arbitrary $y\in V\backslash 0$, notice that $\alpha \in \partial F^2(y)$ iff $\lambda \alpha \in \partial F^2 ( \lambda y)$ for every $\lambda >0$. 
Therefore if we choose $\lambda = 1/F(y)$, then the former case implies that the inequality
\[
F^2(\tilde z)-F^2(\lambda y) \geq \lambda \alpha(\tilde z- \lambda y) + F_{\min}^2(\tilde z-\lambda y)
\]
holds for every $\tilde z \in V$ and $\alpha \in \partial F^2(y)$, what settles (\ref{convexidade forte criterio}).
\end{proof}

We are going to use Theorem \ref{localmente lipschitz homogeneo} and Proposition \ref{fortemente convexo geometrico} in order to outline the proof of the strong convexity of $F$.
Consider Figure \ref{Direcoes preferenciais}.
The curvature of the smooth part of $S_e$ is $\kappa_{S_e} = \frac{1}{\sqrt{5}}$.
Consider an ellipse $E$ centered at the origin with the semi-minor equals to one and the semi-major $R$ so large such that $\kappa_E(p) \leq \frac{1}{2\sqrt{5}}$ for every $p \in B_{\Vert \cdot \Vert}[0,2] \cap E$. 
We claim that $E$, its homotheties and rotations works as the spheres $S_{\Vert \cdot \Vert_{y,\alpha}}[0,1]$ of Proposition \ref{fortemente convexo geometrico}. 
In fact, choose $y\in S_F[0,1]$ and $\alpha \in \partial F^2(y)$. Let $\tilde E$ an appropriate rotation and homothety of $E$ such that $\tilde E$ is tangent to $S_e$ in $y=\tilde p_{\min}$, where $\tilde p_{\min}$ is a point of a semi-minor of $\tilde E$. It is clear that $S_e$ remains inside $\tilde E$ and consequently the norm $\Vert \cdot\Vert_{y,\alpha}$ with unit sphere $\tilde E$ satisfies all conditions of Proposition \ref{fortemente convexo geometrico}.
Finally the norm $F_{\min}$ such that its unit circle is $S_{\Vert \cdot\Vert}[0,R]$ satisfies the conditions of Proposition \ref{fortemente convexo geometrico}. 
Therefore $\mathcal{E}$ is a locally Lipschitz vector field due to Theorem \ref{localmente lipschitz homogeneo}.

Now we do a geometric and intuitive analysis of how minimizing paths behave on $(G,F)$. The analysis is done in the same spirit of the former example: the point $(x(t),\alpha(t))$ will determine $u(\alpha)$ that gives the direction of the curve. From this relationship we will be able to control the the minimizing path $x(t)$.

Let $x\in G$ and denote the preferred directions in $S_F[x,0,1]$ by $V_x(N)$, $V_x(W)$, $V_x(S)$ and $V_x(E)$ (see Figure \ref{Direcoes preferenciais}).
\begin{figure}[h]
\centering
\includegraphics[scale=0.7]{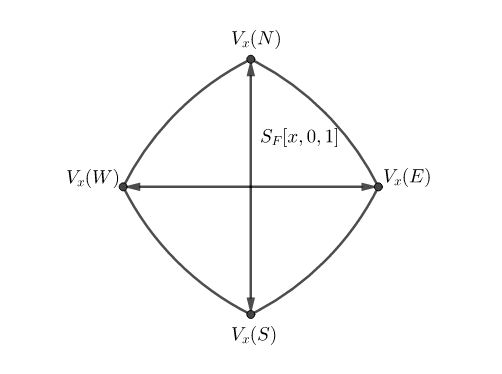}
\caption{Preferred directions.}
\label{Direcoes preferenciais}
\end{figure}

First of all $\alpha_1(t)$ is constant. Assume that $\alpha_1(0)>0$.

For $\alpha_2(0)$ large enough, $V_x(N)$ should maximize $\alpha(0)$. 
In order to see this fact, note that $\alpha(0)=(\alpha_1(0), \alpha_2(0))$ is orthogonal to its level sets with respect to the Euclidean inner product and they are increasing in the direction of $\alpha$. Therefore there exists a level set that is tangent to $S_F[x,0,1]$ exactly at $V_x(N)$ (see Figure \ref{Kernel of xi}).

\begin{figure}[h]
\centering
\includegraphics[scale=0.6]{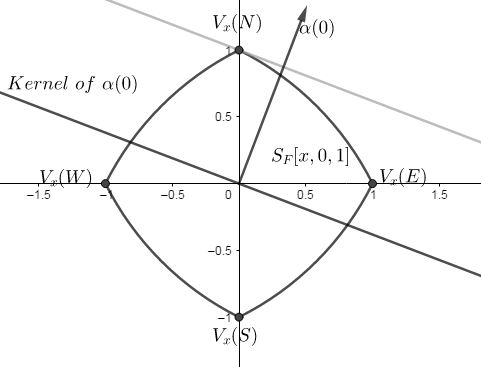}
\caption{$V_x(N)$ maximizes $\alpha(0)$.}
\label{Kernel of xi}
\end{figure}

Now we determine the minimizing path. The calculations below are illustrated in Figure \ref{trajetoria tipica}.

Let $k_1>0$ be such that $V_x(N)$ maximizes $\alpha$ if and only if $\alpha_2\in [k_1,\infty)$. If $\alpha_2(0)\in [k_1,\infty)$, then (\ref{equacao Hamilton}) can be explicitly integrated and we have the solution  $(x^1(t),x^2(t),\alpha_1(t),\alpha_2(t))=(x^1(0),x^2(0)e^t, \alpha_1(0),$ $\alpha_2(0)e^{-t})$ in the maximum interval $\left(-\infty,\ln \left( \frac{\alpha_2(0)}{k_1} \right) \right]$ where $u(\alpha(t))=V_x(N)$.
In this case, $(x^1(t),$ $x^2(t))$ is a piece of vertical line pointed upwards.

Likewise $V_x(S)$ maximizes $\alpha$ if and only if $\alpha_2 \in (-\infty,-k_1]$.
If $\alpha_2(0) \in (-\infty,-k_1]$, then
(\ref{equacao Hamilton}) can be explicitly integrated and we have the solution  $(x^1(t),x^2(t),\alpha_1(t),$ $\alpha_2(t)) = (x^1(0),x^2(0)e^{-t}, \alpha_1(0),$ $\alpha_2(0)e^{t})$ in the maximum interval $\left[\ln \left( \frac{-k_1}{\alpha_2(0)} \right), \infty \right)$ where $u(\alpha(t))=V_x(S)$. 
Here $(x^1(t),x^2(t))$ is a piece of vertical line pointed downwards.

Similarly there exists $k_2>0$ such that $V_x(E)$ maximizes $\alpha(0)$ if and only if $\alpha_2(0)\in [-k_2,k_2]$. If $\alpha_2(0) \in [-k_2,k_2]$, then
(\ref{equacao Hamilton}) can be explicitly integrated and we have the solution  $(x^1(t),x^2(t),\alpha_1(t),$ $\alpha_2(t))=(x^1(0)+x^2(0)t,x^2(0), \alpha_1(0),\alpha_2(0)-\alpha_1(0)t)$ in the maximum interval $\left[\frac{\alpha_2(0)-k_2}{\alpha_1(0)}, \frac{\alpha_2(0)+k_2}{\alpha_1(0)} \right]$ where $u(\alpha(t))=V_x(E)$.
In this case $(x^1(t),x^2(t))$ is a piece of horizontal line pointed to the right-hand side.

If $\alpha_2 \in [k_2,k_1]$, then the unit vector $X_u(x)$ that maximizes $\alpha$ is in the first quadrant. 
As $t$ increases and $\alpha_2(t)$ varies between $k_1$ and $k_2$, $u(t)$ varies between $V_x(N)$ and $V_x(E)$ rotating clockwise. 
Therefore the resulting curve $(x^1(t),x^2(t))$ is similar to a fourth of a circle in the second quadrant with its angular coordinate varying from $\pi$ to $\pi /2$.

Finally if $\alpha_2\in [-k_1,-k_2]$, then the analysis is similar to the case $\alpha_2 \in [k_2,k_1]$, and $(x^1(t),x^2(t))$ is similar to a fourth of a circle in the first quadrant with its angular coordinate varying from $\pi /2$ to $0$.

We can join all parts above and conclude that the minimizing paths of $(G,F$) have the form given in Figure \ref{trajetoria tipica}.

The case $\alpha_1(0)<0$ is treated in the same way, and what we get is a minimizing path as is Figure \ref{trajetoria tipica}, but oriented counterclockwise.

If $\alpha_1(0)\equiv 0$, then for $\alpha_2(0)>0$ we can integrate (\ref{equacao Hamilton}) explicitly and  we have the solution $(x^1(t),x^2(t),$ $\alpha_1(t),\alpha_2(t))=(x^1(0),x^2(0)e^t, 0,\alpha_2(0)e^{-t})$ extendable for every $t\in \mathbb R$.
For $\alpha_2(0)<0$, we also can integrate (\ref{equacao Hamilton}) explicitly and we have the solution $(x^1(t),x^2(t),\alpha_1(t),\alpha_2(t))=(x^1(0),x^2(0)e^{-t}, 0,\alpha_2(0)e^t)$ extendable for every $t \in \mathbb R$.
They are vertical lines.

\begin{figure}[h]
\centering
\includegraphics[scale=0.5]{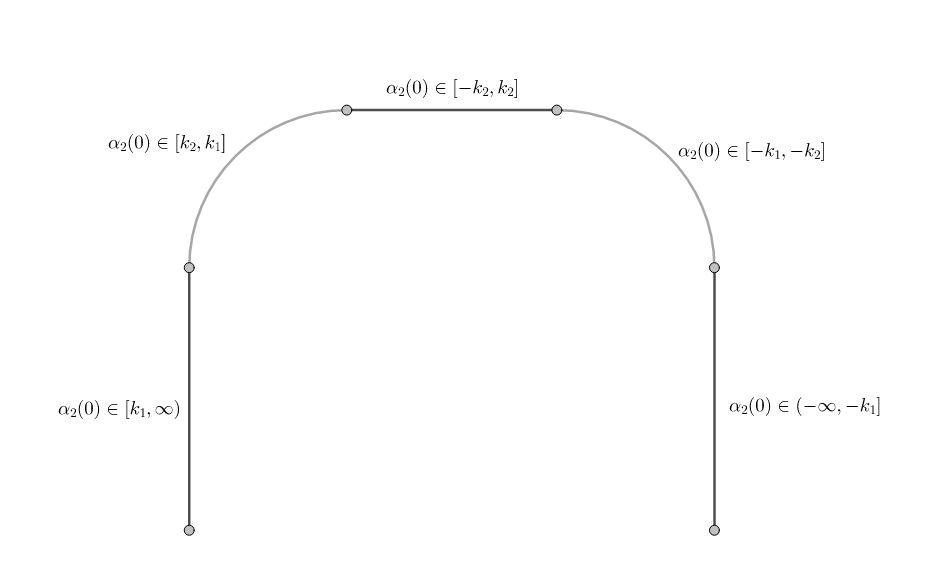}
\caption{A typical minimizing path on $(G,F)$.}
\label{trajetoria tipica}
\end{figure}

\begin{remark}
\label{conclusoes 2}
It is straightforward that if $(x^1(t),x^2(t),\alpha_1(t),\alpha_2(t))$ is a solution of (\ref{equacao Hamilton}), $C_1 >0$ and $C_2 \in \mathbb R$, then $(C_1 x^1(t) + C_2, C_1 x^2(t), \alpha_1(t), \alpha_2(t))$ is also a solution of (\ref{equacao Hamilton}) with the Hamiltonian being the constant $C_1.C_0$ along the curve.
If $(x_0,y_0)$ are such that $y_0$ is a preferred direction, then there exist infinitely many minimizing paths $\gamma:\mathbb R \rightarrow G$ such that $\gamma(0) = x_0$ and $\gamma^\prime(0)=y_0$.
Therefore the minimizing paths of this example can't be represented as the projection of integral curves of a locally Lipschitz vector field on $TM$. 
\end{remark}

\subsection{Maximum of Finsler structures}

\label{maximum-Finsler}

Let $\mathcal F = \{F_1, \ldots, F_k\}$ be a family of Finsler structures on an $n$-dimensional differentiable manifold $M$ and set $F_{\max} = \max\limits_i F_i$.
In this section we show that under independence condition over $\mathcal F$, $(M,F_{\max})$ is a ``local'' Pontryagin type $C^0$-Finsler structure and that $\mathcal E$ is a ``local'' Lipschitz vector field.
At the end of this section we comment how we can ``join'' the local extended geodesic fields in order to define $\mathcal E$ on a maximum subset of $T^\ast M\backslash 0$.

We begin with some prerequisites of convex analysis (see \cite{Urruty-Lemarechal}).

\begin{definition}
\label{indices-ativos}
Let $\mathcal F = \{f_i:V \rightarrow \mathbb R\}_{i=1,\ldots,k}$ be a family of convex functions defined on a finite dimensional real vector space $V$. 
Define $f=\max_{i =1, \ldots, k} f_i$.
The active index set of $\mathcal F$ at $y \in V$ is defined by 
\[
\ind_{\mathcal F}(y) = \{i \in \{ 1, \ldots, k\};f_i(y)=f(y)\}.  
\] 
\end{definition}

\begin{theorem}
\label{maximo-funcoes-convexas}
Let $\mathcal F = \{f_i:V \rightarrow \mathbb R\}_{i=1,\ldots,k}$ and $f$ as in Definition $\ref{indices-ativos}$.
Then
\[
\partial f(y) = \co \left\{\cup_{i \in \ind_{\mathcal F}(y)}\partial f_i(y) \right\},
\]
where $\co$ denotes the convex hull.
\end{theorem}

Now we proceed with our example.

\begin{theorem}
Let $F_1, F_2, \ldots, F_k$ be a family of strongly convex $C^0$-Finsler structures on $M$ with respect to $\check F_1, \ldots, \check F_k$.
Then there exist a $C^0$-Finsler structure $\check F_{\min}$ on $M$ such that $\check F_{\min} \leq \check F_i$ for every $i=1, \ldots, k$.
Moreover $F_{\max} := \max\limits_{i = 1, \ldots, k}F_i$ is a strongly convex $C^0$-Finsler structure on $M$ with respect to $\check F_{\min}$. 
\end{theorem}

\begin{proof}

Let $F$ be any $C^0$-Finsler structure on $M$. 
Define continuous functions $\check c_1, \check c_2: M \rightarrow (0, \infty)$ by
\[
\check c_1(x) = \min\limits_{j\neq i} \min_{y \in S_F[x,0,1]}\frac{\check F_j(x,y)}{\check F_i(x,y)}
\]
and 
\[
\check c_2(x) = \max\limits_{j\neq i} \max_{y \in S_F[x,0,1]}\frac{\check F_j(x,y)}{\check F_i(x,y)}.
\]
Observe that $\check c_1$ and $\check c_2$ does not depend on the choice of $F$.
It follows that
\[
\check c_1(x) \check F_i(x,y) \leq  \check F_j (x,y) \leq \check c_2(x) \check F_i(x,y) 
\]
for every $i,j=1, \ldots, k$ and every $(x,y)\in TM$. 
Finally set $\check F_{\min}(x,y)=\check c_1(x) \check F_1(x,y)$ (or any other $\check c_1(x)\check F_i(x,y))$, what settles the first statement.

In order to prove the last statement, it is straightforward that $F_{\max}$ is a $C^0$-Finsler structure on $M$.
Moreover we have that
\begin{equation}
\label{fortemente convexo com i}
F^2_i(x,z) \geq F^2_i(x, y) + \alpha_i(z-y) + F^2_{\min}(x, z-y)
\end{equation}
for every $x \in M$, $y\in T_xM\backslash 0$, $z \in T_xM$ and $i=1,\ldots, k$ (Here $\alpha_i$ is the differential of the smooth function $y^\prime \mapsto F^2_i(x,y^\prime)$ at $y$). 
If $\alpha \in \partial F_{\max}(x,y)$ and $\ind$ is the active index set of $\{y^\prime \mapsto F_i(x,y^\prime)\}_{i=1, \ldots, k}$ at $y$, then $\alpha$ is a convex combination $\alpha = \sum_{i \in \ind}\lambda_i \alpha_i$ due to Theorem \ref{maximo-funcoes-convexas}.
Considering the correspondent convex combination on (\ref{fortemente convexo com i}), we have that
\[
F^2_{\max}(x,z) \geq \sum_{i\in \ind}\lambda_i F_i^2(x,z) \geq F^2_{\max}(x, y) + \alpha(z-y) + F^2_{\min}(x, z-y),
\]
what settles the theorem.
\end{proof}

The following definition is classical and can be found in \cite{Warner}.

\begin{definition}
\label{define independente} Let $M$ be a differentiable manifold and $x_0 \in M$. 
A set of $C^1$ functions $f_1, \ldots, f_k: M \rightarrow \mathbb R$ is independent on a neighborhood $V$ of $x_0 \in M$ if $\{(df_1)_x, \ldots, (df_k)_x\} \subset T^\ast_xM$ is linearly independent for every $x \in V$. 
\end{definition}

Denote the differential of $y^\prime \mapsto F_i(x,y^\prime)$ at $y$ by $d(F_i(x,\cdot))_y$.

\begin{definition}
\label{finsler transversal}
Let $M$ be an $n$-dimensional differentiable manifold and $\mathcal F = \{F_1, \ldots, F_k\}$ be a finite family of Finsler structures on $M$. Consider $(x_0,y_0) \in TM$ such that $F_i(x_0,y_0) = 1$ for every $i=1,\ldots, k$.
We say that $\mathcal F$ is independent in a neighborhood $W$ of $(x_0,y_0)$ if $\{ d(F_1(x, \cdot))_y, \ldots, d(F_k(x, \cdot))_y\}$ $\subset T_xM$ is linearly independent for every $(x,y) \in W$.
\end{definition}

The independence in Definition \ref{finsler transversal} implies that the unit spheres $S_{F_1}[x_0, 0, 1], \ldots, S_{F_k}[x_0, 0, 1]$ are mutually transversal at $(x_0,y_0)$.
Moreover this kind of transversality is a stable property and it also holds in a neighborhood of $(x_0,y_0)$ with the respective spheres $S_{F_1}[x,0,r_1], \ldots, S_{F_k}[x,0,r_k]$, where the $r_i$'s are sufficiently close to $1$.

\begin{lemma}
\label{maximo-Finsler-Pontryagin}
Let $\mathcal F=\{F_1, \ldots, F_k\}$ be a family of Finsler structures on an $n$-dimensional differentiable manifold $M$ which are independent  in a neighborhood $W$ of $(x_0,y_0)$ and set $F_{\max} = \max\limits_i F_i$.
Suppose that $F_i(x_0,y_0)=1$ for every $i=1,\ldots, k$.
Then there exist a control region $C \subset S^{n-1}$ with non-empty interior and a family of smooth unit vector fields $\{X_u, u \in C\}$ on a neighborhood $U$ of $x_0 \in (M, F_{\max})$ such that $u \mapsto X_u(x)$ is a homeomorphism of $C$ onto its image in $S_{F_{\max}}[x,0,1]$. 
Moreover $\{X_u\}$ satisfies conditions (2) and (3) of Definition \ref{horizontally smooth}.
\end{lemma}

\begin{proof}
Let $\phi=(x^1, \ldots, x^n)$ be a coordinate system on a neighborhood $\tilde U$ of $x_0\in M$ and $\phi_{T\tilde U} = (x^1, \ldots, x^n, y^1,$  $\ldots, y^n)$ be the corresponding coordinate system on $T\tilde U$.
Let $F_{k+1}, \ldots, F_n: W \rightarrow \mathbb R$ be smooth functions on $W$ such that $\{ d(F_1(x_0, \cdot))_{y_0}, \ldots, d(F_n(x_0, \cdot))_{y_0}\}$ is a basis of $T_{x_0}M$.
Therefore there exist a neighborhood $W_1$ of $(x_0,y_0) \in TM$ such that 
\begin{equation}
\label{define-psi}
\psi(x,y) = ( x^1, \ldots, x^n, F_1(x,y), \ldots, F_n(x, y) ):=(x^1, \ldots, x^n, u^1, \ldots, u^n)
\end{equation}
is a coordinate system on $W_1$. We can suppose, without loss of generality, that  $F_i(x_0, y_0) = 1/2$ for every $i = k+1, \ldots n$.

Instead of considering $S^{n-1}$ as the Euclidean sphere, we set 
\[
S^{n-1} = \{(z^1,\ldots, z^n) \in \mathbb R^{n}; \max\limits_i \vert z^i\vert = 1\},
\]
which is the unit sphere centered at the origin with respect to the maximum norm.

Consider a neighborhood $U$ of $x_0 \in M$ and $\varepsilon \in (0,1/2)$ such that 
\[
W_2 = \phi(U) \times Q := \phi (U) \times (F_1(x_0,y_0) -\varepsilon, F_1(x_0,y_0) + \varepsilon) \times \ldots \times (F_n(x_0,y_0) -\varepsilon, F_n(x_0,y_0) + \varepsilon)  \subset \subset \psi(W_1).
\]
Observe that
\begin{equation}
\label{xu-u-suave}
X_u(x):=(\psi^{-1}\vert_{W_2})(x,u)
\end{equation}
is a smooth family of vector fields on $U$ (not necessarily unit vector fields) that satisfies conditions (2) and (3) of Definition \ref{horizontally smooth}.
Finally if we set $C= S^{n-1} \cap Q$,
then $C$ is a neighborhood of $(F_1(x_0,y_0), \ldots, F_n(x_0,y_0)) \in S^{n-1}$ and 
\[
\{X_u(x) = \psi^{-1}(x,u)\}_{u\in C}
\]
is a family of unit vector fields on $U$ satisfying Items (2) and (3) of Definition \ref{horizontally smooth}.

\end{proof}

Suppose that we are in the conditions of Lemma \ref{maximo-Finsler-Pontryagin}.
We also suppose that all elements of the proof of Lemma \ref{maximo-Finsler-Pontryagin} are in place. 
The strong convexity of $F_{\max}$ implies that $\mathcal E$ is a vector field on $T^\ast M$. 
Suppose that $(x_0,\alpha_0) \in T^\ast M$ is such that $y_0 = X_{u(x_0,\alpha_0)}(x_0)$ is its unique maximizer  in $S_{F_{\max}}[x_0, 0 ,1]$.
We are going to prove a Lipschitz type inequality
\[
\Vert \mathcal E(x,\alpha) - \mathcal E(x_0, \alpha_0) \Vert \leq C_0\left( \Vert x-x_0\Vert + \Vert \alpha - \alpha_0 \Vert \right)
\]
in order to prove that $\mathcal E$ is a locally Lipschitz vector field, where $\Vert \cdot \Vert$ is the canonical Euclidean norm with respect to the coordinate system $\phi_{T^\ast U}$. 
In what follows we determine a neighborhood $U_0 \times V^\ast_0$ of $(x_0, \alpha_0)$ in $T^\ast M$ such that $u(x, \alpha)$ is well defined for every $(x,\alpha) \in U_0 \times V^\ast_0$.
Moreover we define a smooth $1$-form $\tilde \alpha_0(x)$ in a neighborhood of $x_0$, which is a extension of $\alpha_0$, such that the triangle inequality
\begin{equation}
\label{desigualdade-triangular-maximo}
\Vert \mathcal E(x,\alpha) - \mathcal E(x_0, \alpha_0)\Vert 
\leq \Vert \mathcal E(x,\alpha) - \mathcal E(x, \tilde \alpha_0(x))\Vert + \Vert \mathcal E(x,\tilde \alpha_0 (x)) - \mathcal E(x_0, \alpha_0)\Vert
\end{equation}
is useful. 
The second term of the right-hand side of (\ref{desigualdade-triangular-maximo}) is a kind of horizontal control of the left-hand side and the first term of the right-hand side of (\ref{desigualdade-triangular-maximo}) is a kind of vertical control.

Let us begin with the definition of $\tilde \alpha_0$ and the horizontal control of (\ref{desigualdade-triangular-maximo}).

A functional $\alpha \in T^\ast_x M$ is maximized by $y$ in $S_{F_{\max}}[x,0,1]$ iff $\alpha$ is a positive multiple of an element of $\partial F_{\max}(x,y)$ due to the strong convexity of $F_{\max}$ and its positive homogeneity.
In this case, we can write 
\begin{equation}
\label{notacao-soma-alpha}
\alpha = \sum_{i \in \ind } c^\prime_i d(F_i(x,\cdot))_{y}, \;\; c^\prime_i \geq 0, \;\; i\in \ind,
\end{equation} 
where $\ind$ is the active index set of $\{F_1(x,\cdot), \ldots, F_k(x,\cdot)\}$ at $y$, due to  Theorem \ref{maximo-funcoes-convexas}.

Consider $\alpha_0 \in T^\ast_{x_0} M$ that is maximized by $y_0 \in S_{F_{\max}}[x_0,0,1]$.
We can write
\[
\alpha_0 = \sum_{i=1}^k c_i d(F(x_0,\cdot))_{y_0} = \sum_{i=1}^k c_i d(F(x_0,\cdot))_{X_{u(x_0, \alpha_0)}(x_0)}
\]
as in (\ref{notacao-soma-alpha}) because $\ind = \{1, 2, \ldots, k\}$.
Define the smooth extension
\[
\tilde \alpha_0 (x) = \sum_{i=1}^k c_id(F_i(x,\cdot))_{X_{u(x_0,\alpha_0)}(x)}
\]
of $\alpha_0$ on $U$.
It follows that there exist $C_1>0$ such that
\begin{equation}
\label{exemplo-controle-horizontal}
\Vert \tilde \alpha_0 (x) - \alpha_0 \Vert \leq C_1 \Vert x-x_0\Vert
\end{equation}
due to the relative compactness of $U$.
Moreover
\[
\mathcal E(x,\tilde \alpha_0(x)) = \vec H_{u(x_0, \alpha_0)}(x, \tilde \alpha_0 (x))
\] 
and we have that
\begin{align}
\Vert \mathcal E(x,\tilde \alpha_0(x)) - \mathcal E(x_0, \alpha_0)\Vert & \leq \Vert \vec H_{u(x_0, \alpha_0)}(x, \tilde \alpha_0 (x)) - \vec H_{u(x_0, \alpha_0)}(x_0, \alpha_0)\Vert \nonumber \\
& \leq \tilde C\left( \Vert x-x_0\Vert + \Vert \tilde \alpha_0 (x) - \alpha_0\Vert \right) \nonumber \\
& \leq C_2 \Vert x-x_0\Vert, \label{exemplo-estimativa-E-transporte}
\end{align}
due to (\ref{exemplo-controle-horizontal}) and the smoothness of $\vec H_{u(x_0,\alpha_0)}$. 
This is the horizontal estimate of (\ref{desigualdade-triangular-maximo}).

For $u \in Q$, set
\[
\ind_u = \{i \in \{1, \ldots, k\}, u^i \geq u^j \text{ for every }j\in \{1, \ldots, k\}\}
\]
and
\[
u_{\max} = \max\limits_{i=1, \ldots, k} u_i.
\]
Notice that $\ind_u$ is the active index set of $\{F_1(x, \cdot), \ldots, F_k(x, \cdot)\}$ at $X_u(x)$ for every $x \in U$.
Define
\[
\tilde \alpha_{u,a_i} (x) = \sum_{i \in \ind_u} a_i d(F_i(x,\cdot))_{X_u(x)}.
\]
Notice that $X_u(x) \in S_{F_{\max}}[x,0,u_{\max}]$.
Moreover $\tilde \alpha_{u,a_i}$ is maximized by $X_u(x)$ in $S_{F_{\max}}[x,0,u_{\max}]$
 iff $a_i\geq 0$ for every $i \in \ind_u$ due to (\ref{define-psi}) and (\ref{notacao-soma-alpha}).
 
In order to extend (\ref{exemplo-controle-horizontal}) to $\{\tilde \alpha_{u,a_i}\}_{\{u \in Q;a_i\geq 0\}}$, observe that if
\[
\alpha = \sum_{i=1}^k a_i d(F_i(x_0, \cdot))_{y_0},
\] 
then there exist $\tilde C > 0$ such that $a_i \leq \tilde C\Vert \alpha\Vert$ for every $\alpha \in \spann \{d(F_i(x_0, \cdot))_{y_0}\}_{i=1, \ldots, k}$. 
Analogously if
\[
\tilde \alpha_{u,a_i}(x)=\sum_{i=1}^k a_i d(F_i(x, \cdot))_{X_{u}(x)},
\]
then there exists a constant $C_3>0$ satisfying
\begin{equation}
\label{estima-ais}
a_i \leq C_3\Vert \tilde \alpha_{u,a_i}(x)\Vert 
\end{equation}
for every $(x,u) \in U \times Q$ and $(a_1, \ldots, a_k) \in \mathbb R^k$ due to the relative compactness of $U \times Q$.
%Therefore
%\begin{align*}
%\Vert \tilde \alpha_u (x) - \tilde \alpha_u (x_0)\Vert 
%& \leq \left\Vert \sum_{i=1}^k a_i \left( d(F_i(x, \cdot))_{X_{u}(x)} - d(F_i(x_0, \cdot))_{X_{u}(x_0)} \right) 
% \right\Vert \nonumber \\
%& \leq C_3.\Vert \tilde \alpha_u(x_0)\Vert \left\Vert \sum_{i=1}^k d(F_i(x, \cdot))_{X_{u}(x)} - d(F_i(x_0, \cdot))_{X_{u}(x_0)}\right\Vert \nonumber \\
%& \leq C_4 \Vert \tilde \alpha_u (x_0)\Vert \Vert x-x_0 \Vert. \end{align*}

Now we will construct a relatively compact neighborhood $U_0 \times V^\ast_0$ of $(x_0, \alpha_0) \in T^\ast M$ such that $u(x, \alpha)$ is well defined for every $(x, \alpha) \in U_0 \times V^\ast_0$. Here $U_0 \times V^\ast_0$ is the product with respect to the coordinate system $\phi_{T^\ast \tilde U}$. 
As a consequence $\mathcal E(x,\alpha)$ will be well defined for every $(x,\alpha) \in U_0 \times V^\ast_0$. 
In order to do that, we consider $V^\ast_x = \{\alpha \in T^\ast_x M; u(x,\alpha) \in C\}$ for each $x \in U$. 
Then $V^\ast_x$ is an open subset of $T^\ast_xM$ because $\alpha \mapsto X_{u(x, \alpha)}(x)$ and $X_{u(x,\alpha)} \mapsto u(x,\alpha)$ (with $x$ fixed) are Lipschitz maps (see Theorem \ref{diferencialduallipschitz} and (\ref{define-psi}) respectively).
If we prove that $\cup_{x \in U} V^\ast_x$ is an open subset of $T^\ast M$, then the existence of a neighborhood $U_0 \times V^\ast_0$ of $(x_0, \alpha_0)$ is immediate. 
In order to do that, we prove that the variation of the family $V^\ast_x$ with respect to $x$ is sufficiently well behaved.

Define the family of isomorphisms $\iota (x,u): T^\ast_{x_0} M \rightarrow T^\ast_{x} M$ parametrized by $U \times Q$ as
\[
d(F_i(x_0, \cdot))_{X_{u}(x_0)} \mapsto d(F_i(x, \cdot))_{X_u(x)}, \hspace{3mm} i=1, \ldots, n
\]
on the basis $\{d(F_i(x_0, \cdot))_{X_{u}(x_0)}\}_{i=1, \ldots, n}$ of $T^\ast_{x_0}M$.
Then there exist constants $C_4,C_5 > 0$ such that 
\begin{equation}
\label{estimativa-iota}
C_4 \Vert \alpha \Vert 
\leq \Vert \iota (x, u)(\alpha) \Vert 
\leq C_5.\Vert \alpha \Vert
\end{equation}
for every $(x,u) \in U \times Q$ and every $\alpha \in T^\ast_{x_0}M$ due to the smoothness of the map $(x,u) \mapsto d(F_i(x,\cdot))_{X_u(x)}$ and the relative compactness of $U \times Q$.

Now we define a family of bijections $\zeta_x: V^\ast_{x_0} \rightarrow V^\ast_x$ parametrized by $U$.
Represent $\alpha \in V^\ast_{x_0}$ as  
\[
\alpha = \sum_{i \in \ind_{u(x_0,\alpha)}}c_i d(F_i(x_0, \cdot))_{X_{u(x_0, \alpha)}(x_0)}, \hspace{2mm} c_i \geq 0
\]
and define
\[
\zeta_x(\alpha) = \sum_{i \in \ind_{u(x_0,\alpha)}}c_i d(F_i(x, \cdot))_{X_{u(x_0, \alpha)}(x)}.
\]
Consider
\[
\beta = \sum_{i \in \ind_{u(x_0,\beta)}} \tilde c_i d(F_i(x_0, \cdot))_{X_{u(x_0, \beta)}(x_0)}, \hspace{2mm} \tilde c_i \geq 0.
\]
We have that
\begin{align}
\Vert \zeta_x(\alpha - \beta) \Vert 
& \leq \left\Vert \sum_{i \in \ind_{u(x_0,\alpha)}}c_i d(F_i(x, \cdot))_{X_{u(x_0, \alpha)}(x)} -\sum_{i \in \ind_{u(x_0,\beta)}} \tilde c_i d(F_i(x, \cdot))_{X_{u(x_0, \beta)}(x)} \right\Vert \nonumber \\
& \leq \left\Vert \sum_{i = 1}^k (c_i - \tilde c_i) d(F_i(x, \cdot))_{X_{u(x_0, \alpha)}(x)} \right\Vert \nonumber \\
& + \left\Vert \sum_{i=1}^{k} \tilde c_i \left( d(F_i(x, \cdot))_{X_{u(x_0, \alpha)}(x)} - d(F_i(x, \cdot))_{X_{u(x_0, \beta)}(x)} \right) \right\Vert \nonumber \\
& \leq C_5.\left\Vert \sum_{i = 1}^k (c_i - \tilde c_i) d(F_i(x_0, \cdot))_{X_{u(x_0, \alpha)}(x_0)} \right\Vert \nonumber \\
& + C_3 \Vert \beta \Vert \left\Vert \sum_{i=1}^{k}  \left( d(F_i(x, \cdot))_{X_{u(x_0, \alpha)}(x)} - d(F_i(x, \cdot))_{X_{u(x_0, \beta)}(x)} \right) \right\Vert \nonumber \\
& \leq C_5. \left\Vert \sum_{i=1}^k c_i d(F_i(x_0, \cdot))_{X_{u(x_0, \alpha)}(x_0)} -\sum_{i=1}^k \tilde c_i d(F_i(x_0, \cdot))_{X_{u(x_0, \beta)}(x_0)} \right\Vert \nonumber \\
& + C_5 . \left\Vert \sum_{i=1}^k \tilde c_i \left ( d(F_i(x_0, \cdot))_{X_{u(x_0, \beta)}(x_0)} - d(F_i(x_0, \cdot))_{X_{u(x_0, \alpha)}(x_0)} \right) \right\Vert \nonumber \\
& + C_3 \Vert \beta \Vert \left\Vert \sum_{i=1}^{k}  \left( d(F_i(x, \cdot))_{X_{u(x_0, \alpha)}(x)} - d(F_i(x, \cdot))_{X_{u(x_0, \beta)}(x)} \right) \right\Vert \nonumber \\
& \leq C_5. \Vert \alpha - \beta \Vert + C_5. C_3.\Vert \beta \Vert . \tilde C . \Vert X_{u(x_0,\alpha)}(x_0) - X_{u(x_0,\beta)}(x_0) \Vert \nonumber \\
& + C_3 \Vert \beta \Vert . \tilde C. \Vert X_{u(x_0,\alpha)}(x) - X_{u(x_0,\beta)}(x) \Vert \nonumber \\
& \leq C_6 (1+\Vert \beta \Vert)\Vert \alpha - \beta\Vert \nonumber
\end{align}
due to (\ref{estima-ais}), (\ref{estimativa-iota}), Remark \ref{differencial} and the smoothness of $F_i$.
Therefore $\zeta_x$ are locally Lipschitz bijections such that $V^\ast_x$ vary continuously with respect to $x$, what implies that
$\cup_{x \in U} V_x$ is an open subset of $T^\ast M$ and settles the existence of a relatively compact neighborhood $U_0 \times V^\ast_0$ of $(x_0, \alpha_0) \in T^\ast M$ such that $u(x,\alpha)$ is defined for every $(x,\alpha) \in U_0 \times V^\ast_0$.

Let us estimate the vertical part of (\ref{desigualdade-triangular-maximo}). Consider $(x, \alpha) \in U_0 \times V_0$ and decompose
\[
\mathcal E(x,\alpha) = (X_{u(x,\alpha)}(x), V(x, \alpha))
\]
where 
\[
V(x,\alpha) = - \alpha_j \frac{\partial f^j}{\partial x^i}(x,u(x,\alpha)) \frac{\partial}{\partial \alpha_i}.
\]
We have that
\begin{align}
\Vert \mathcal E(x,\alpha) - \mathcal E(x, \beta) \Vert 
& \leq 
\Vert X_{u(x,\alpha)}(x) - X_{u(x,\beta)}(x)\Vert + \Vert V(x, \alpha)-V(x, \beta) \Vert \nonumber \\ 
& \leq \tilde C_1\Vert \alpha - \beta \Vert \nonumber \\
& + \left\Vert - \alpha_j \frac{\partial f^j}{\partial x^i}(x,u(x,\alpha)) \frac{\partial}{\partial \alpha_i} + \alpha_j \frac{\partial f^j}{\partial x^i}(x,u(x,\beta)) \frac{\partial}{\partial \alpha_i} \right\Vert \nonumber \\ 
& 
+ \left\Vert \left( \beta_j - \alpha_j \right) \frac{\partial f^j}{\partial x^i}(x,u(x,\beta)) \frac{\partial}{\partial \alpha_i} 
\right\Vert \nonumber \\ 
& \leq \tilde C_1 \Vert \alpha - \beta \Vert + \tilde C_2 \Vert \alpha\Vert. \tilde C_3. \Vert u(x,\alpha) - u(x, \beta) \Vert + \tilde C_4\Vert \alpha - \beta \Vert \nonumber \\
& \leq \tilde C_5 \Vert \alpha - \beta \Vert + \tilde C_6 \Vert X_{u(x,\alpha)}(x) - X_{u(x, \beta)}(x)\Vert \nonumber \\
& \leq C_7 \Vert \alpha - \beta \Vert \label{exemplo-estimativa-vertical}
\end{align}
due to Remark \ref{differencial}, (\ref{xu-u-suave}) and to the smoothness of $\partial f^j/\partial x^i$.

Finally consider (\ref{exemplo-controle-horizontal}), (\ref{exemplo-estimativa-E-transporte}), (\ref{exemplo-estimativa-vertical}) and $(x,\alpha)\in U_0 \times V^\ast_0$ in order to get 
\begin{align}
\Vert \mathcal E(x,\alpha) - \mathcal E(x_0, \alpha_0) \Vert & \leq \Vert \mathcal E(x,\alpha) - \mathcal E(x, \tilde \alpha_0(x)) \Vert + \Vert \mathcal E(x,\tilde \alpha_0 (x)) - \mathcal E(x_0, \alpha_0) \Vert \nonumber \\ 
& \leq C_7 \Vert \alpha - \tilde \alpha_0(x)  \Vert  + C_2 \Vert x-x_0\Vert \nonumber \\
& \leq C_7 \left( \Vert \alpha - \alpha_0  \Vert + \Vert \tilde \alpha_0(x) - \alpha_0  \Vert \right) + C_2 \Vert x-x_0\Vert \nonumber \\
& \leq C_0 \left( \Vert \alpha - \alpha_0\Vert + \Vert x- x_0\Vert \right). \nonumber
\end{align}
We have proved the following theorem

\begin{theorem}
\label{teorema-maximo-localmente-Lipschitz}
Suppose that $(M, F_{\max})$ and $(x_0, y_0)$ satisfies the conditions of Lemma \ref{maximo-Finsler-Pontryagin}.
Let $(x_0, \alpha_0) \in T^\ast M$ such that $X_{u(x_0,\alpha_0)}(x_0) = y_0$.
Then $\mathcal E$ is a Lipschitz vector field in a neighborhood of $(x_0, \alpha_0)$.
\end{theorem}

\begin{remark}
\label{exemplo-globalizar}
This example depends only on the theory developed until Section \ref{asymmetric norms}, which holds for this local setting.

The local extended geodesic field $\mathcal E$ does not depend either on the coordinate system or on the control system that is in place.
Therefore we can consider all the open subsets of $T^\ast M\backslash 0$ where $\mathcal E$ can be defined locally and we can join them all in order to define $\mathcal E$ in the maximum subset.

The authors do not know whether Theorem \ref{teorema-maximo-localmente-Lipschitz} holds if $\mathcal F$ is not independent at $(x_0, y_0) \in TM$.
\end{remark}

\section{Suggestions for future works}
\label{conclusoes}

In this chapter we leave some open questions and suggestions for future works:

\begin{enumerate}

\item It would be interesting to find (if possible) a Pontryagin type $C^0$-Finsler structure such that $\mathcal E$ is a vector field that satisfies the condition of local existence of solutions, but it doesn't satisfy the condition of uniqueness.

\item Pontryagin's maximum principle is a useful tool in order to find necessary condition for the problem of minimizing paths and geodesics.
It would be nice if we can find useful sufficient conditions, as it happens locally in Riemannian and Finsler geometry.
Results concerning the injectivity radius of $(M,F)$ would be welcome
(This problem arose from a question made by Prof. Josiney Alves de Souza).    

\item In the strongly convex case, it is natural to study more general hypotheses in order to assure that $\mathcal E$ is a locally Lipschitz vector field;

\item It would be interesting to study integral curves of $\mathcal E$ for particular instances of $G$-invariant $C^0$-Finsler structures on homogeneous spaces.

\item At least for some particular cases, we can try to study curvatures on Pontryagin type $C^0$-Finsler manifolds considering ideas similar to the theory of Jacobi fields.

\item It would be interesting to study traditional geometrical objects such as connections and curvatures on the non-smooth part of the maximum of independent Finsler structures.

\end{enumerate}

%%-----------------------------
%%      your bibliography
%%-----------------------------
\end{document}